\documentclass[12pt]{article}
\usepackage{amsmath,amssymb,amsfonts,amsthm}
\usepackage{setspace}
\setlength{\parindent}{18pt}
\textwidth14cm
\textheight22cm
\newenvironment{myabstract}{\par\noindent
{\bf Abstract . } \small }
{\par\vskip8pt minus3pt\rm}
\newcounter{item}[section]
\newcounter{kirshr}
\newcounter{kirsha}
\newcounter{kirshb}
\newenvironment{enumarab}{\setcounter{kirshb}{1}
\begin{list}{(\arabic{kirshb})}{\usecounter{kirshb}} }{\end{list}}
\newtheorem{theorem}{Theorem}[section]

\newtheorem{lemma}[theorem]{Lemma}
\newtheorem{corollary}[theorem]{Corollary}
\theoremstyle{definition}

\newtheorem{example}[theorem]{Example}
\newtheorem{definition}[theorem]{Definition}
\def\R{\mathbb{R}}
\def\Q{\mathbb{Q}}
\def\C{{\mathfrak{C}}}
\def\Fm{{\mathfrak{Fm}}}

\def\At{{\bf At}}
\def\Nr{{\mathfrak{Nr}}}

\def\Sg{{\mathfrak{Sg}}}
\def\Fm{{\mathfrak{Fm}}}
\def\A{{\mathfrak{A}}}
\def\B{{\mathfrak{B}}}
\def\C{{\mathfrak{C}}}
\def\D{{\mathfrak{D}}}
\def\M{{\mathfrak{M}}}
\def\N{{\mathfrak{N}}}
\def\Sn{{\mathfrak{Sn}}}
\def\CA{{\bf CA}}
\def\QA{{\bf QA}}

\def\QEA{{\bf QEA}}

\def\Lf{{\bf Lf}}

\def\PA{{\bf PA}}
\def\PEA{{\bf PEA}}

\def\K{{\bf K}}
\def\K{{\bf K}}

\def\RCA{{\bf RCA}}

\def\Rd{{\ Rd}}
\def\(R)RA{{\bf (R)RA}}
\def\RA{{\bf RA}}

\def\R{\mathbb{R}}
\def\Q{\mathbb{Q}}

\def\Sc{{\bf Sc}}

\def\Id{{\bf Id}}
\def\c #1{{\cal #1}}
 \def\CA{{\sf CA}}
\def\B{{\sf B}}
\def\G{{\sf G}}
\def\w{{\sf w}}
\def\y{{\sf y}}
\def\g{{\sf g}}
\def\b{{\sf b}}
\def\r{{\sf r}}
\def\K{{\sf K}}
\def\tp{{\sf tp}}

 \def\Cm{{\mathfrak{Cm}}}
\def\Nr{{\mathfrak{Nr}}}

\def\restr #1{{\restriction_{#1}}}
\def\cyl#1{{\sf c}_{#1}}
\def\diag#1#2{{\sf d}_{#1#2}}

\def\R{\sf R}
\def\Ra{{\mathfrak{Ra}}}
\def\Ca{{\mathfrak{Ca}}}
\def\set#1{\{#1\} }
\def\Ra{{\mathfrak{Ra}}}
\def\Nr{{\mathfrak{Nr}}}
\def\Tm{{\mathfrak{Tm}}}
\def\A{{\mathfrak{A}}}
\def\B{{\mathfrak{B}}}
\def\C{{\mathfrak{C}}}
\def\D{{\mathfrak{D}}}

\def\A{{\mathfrak{A}}}
\def\B{{\mathfrak{B}}}
\def\C{{\mathfrak{C}}}
\def\D{{\mathfrak{D}}}

\def\U{{\mathfrak{U}}}

\def\Bb{{\mathfrak{Bb}}}
\def\L{{\mathfrak{L}}}
\def\Rd{{\mathfrak{Rd}}}
\def\Bb{{\mathfrak{Bb}}}
\def\At{{\mathfrak{At}}}
\def\L{{\mathfrak{L}}}

\def\CA{{\bf CA}}
\def\RA{{\bf RA}}

\def\RCA{{\bf RCA}}

\def\G{{\bf G}}

\def\F{{\mathfrak{F}}}
\def\At{{\sf{At}}}
\def\N{\mathbb{N}}
\def\R{\mathfrak{R}}

\def\Cs{{\sf Cs}}

\def\cyl#1{{\sf c}_{#1}}
\def\diag#1#2{{\sf d}_{#1#2}}

\def\c #1{{\cal #1}}

\def\pa{$\forall$}
\def\pe{$\exists$}

\def\ef{Ehren\-feucht--Fra\"\i ss\'e}

\def\nodes{{\sf nodes}}

\def\restr #1{{\restriction_{#1}}}

\def\Ra{{\mathfrak{Ra}}}
\def\Nr{{\mathfrak{Nr}}}
\def\Z{{\cal Z}}
\def\CA{{\bf CA}}
\def\RCA{{\bf RCA}}
\def\c#1{{\mathcal #1}}

\def\set#1{ \{#1\}}

\def\Ca{{\mathfrak Ca}}
\def\b#1{{\bar{ #1}}}
\def\pe{$\exists$}
\def\pa{$\forall$}
\def\Cm{{\mathfrak Cm}}
\def\Sg{{\mathfrak Sg}}

\def\Rl{{\mathfrak Rl}}
\def\N{{\cal N}}

\def\ls { L\"owenheim--Skolem}
\def\At{{\sf At}}
\def\Uf{{\sf Uf}}

\def\rng{{\sf rng}}
\def\dom{{\sf dom}}
\def\Cm{{\sf Cm}}

\def\w{{\sf w}}
\def\g{{\sf g}}
\def\y{{\sf y}}
\def\r{{\sf r}}
\def\tp{{\sf tp}}
\def\cyl#1{{\sf c}_{#1}}

\def\diag#1#2{{\sf d}_{#1#2}}

\def\ws{winning strategy}
\def\ef{Ehren\-feucht--Fra\"\i ss\'e}

 \def\CA{{\sf CA}}
\def\Cs{{\sf Cs}}
\def\RCA{{\sf RCA}}

\def\RA{{\sf RA}}
\def\PA{{\sf PA}}

\def\PEA{\sf PEA}
\def\QEA{{\sf QEA}}

\def\y{{\sf y}}
\def\g{{\sf g}}
\def\r{{\sf r}}
\def\w{{\sf w}}
\def\Z{{\mathbb{Z}}}
\def\N{{\mathbb{N}}}

\def\CPEA{\sf CPEA}
\def\U{{\mathfrak{U}}}

\def\Gp{{\sf Gp}}

\def\c{{\sf c}}
\def\s{{\sf s}}

\def\Ig{{\sf Ig}}

\def\d{ Dedekind--MacNeille}
\def\Id{{\sf Id}}
\def\Sc{{\sf Sc}}

\def\Lf{{\sf Lf}}
\def\K{{\sf K}}
\def\nodes{{\sf nodes}}

\def\G{{\bold G}}
\def\Sc{{\sf Sc}}

\def\PA{{\sf PA}}
\def\Id{{\sf Id}}
\def\QEA{{\sf QEA}}
\def\s{{\sf s}}

\def\CA{{\sf CA}}
\def\K{{\sf K}}
\def\QA{{\sf QA}}
\def\RCA{{\sf RCA}}

\def\A{{\mathfrak{A}}}
\def\Cs{{\sf Cs}}

\def\V{{\sf V}}

\def\cyl#1{{\sf c}_{#1}}

\def\diag#1#2{{\sf d}_{#1#2}}

\def\la#1{\langle#1\rangle}
\def\G{{\mathfrak{G}}}
\def\Nr{{\sf Nr}}
\def\de{Dedekind-MacNeille}
\def\Cm{{\mathfrak{Cm}}}
\def\M{{\sf M}}
\def\T{{\sf T}}
\def\VT{{\sf VT}}
\def\CRCA{{\sf CRCA}}
\def\PEA{{\sf PEA}}
\title{Finite relation algebras and omitting types in modal fragments of first order logic}
\author{Tarek Sayed Ahmed\\
Department of Mathematics, Faculty of Science,\\
Cairo University, Giza, Egypt.
 }
\date{}
\begin{document}
\maketitle

\begin{myabstract}
Let $2<n\leq l<m< \omega$. Let $L_n$ denote first order logic restricted to the first $n$ variables.  We show 
that the omitting types theorem fails dramatically 
for the $n$--variable fragments of first order logic with respect to clique guarded semantics, and for 
its packed $n$--variable fragments. 
Both are modal fragments of $L_n$. 
As a sample, we show that if there exists 
a finite relation algebra with a so--called strong $l$--blur, and no
$m$--dimensional relational basis, then there exists a countable, atomic and complete $L_n$ theory $T$  
and  type $\Gamma$, such that $\Gamma$ is realizable in every so--called $m$--square model
of $T$,  but any witness isolating $\Gamma$ cannot use less than $l$ variables. 
An $m$--square model $\M$ of $T$ gives a form of clique guarded semantics, where the parameter $m$, measures
how locally well behaved $\M$ is. Every ordinary model is $k$--square for any $n<k<\omega$, 
but the converse is not true.  Any 
model $\M$ is $\omega$--square, and the two notions  
are equivalent if $\M$ is countable.

Such relation algebras are shown to exist 
for certain values of $l$ and $m$ like for $n\leq l<\omega$ and $m=\omega$,  and for $l=n$ and $m\geq n+3$.
The case $l=n$ and $m=\omega$ gives that the omitting types theorem fails for $L_n$ with respect to (usual) 
Tarskian semantics: There is an atomic countable $L_n$ theory $T$  for which the single 
non--principal type consisting of co--atoms cannot be omitted in any model $\M$ of $T$.

For $n<\omega$, positive results on omitting types are obained for $L_n$ by imposing extra conditions 
on the theories and/or the types omitted.  Positive and negative results on omitting types 
are obtained for infinitary variants and extensions of $L_{\omega, \omega}$.
\end{myabstract}

\section{Introduction}

\subsection{Overview}

We follow the notation of \cite{1} which is in conformity with the notation in the monograph 
\cite{HMT2}.  In particular, for any pair of ordinal $\alpha<\beta$, $\CA_{\alpha}$ stands for the class of cylindric algebras of dimension
$\alpha$, $\RCA_{\alpha}$ denotes the class of representable $\CA_{\alpha}$s and $\Nr_{\alpha}\CA_{\beta}(\subseteq \CA_{\alpha})$ 
denotes the class of $\alpha$--neat reducts of $\CA_{\beta}$s. This class is studied in 
the chapter  \cite{Sayedneat} of \cite{1}.   
For $n<\omega$, $L_n$ denotes first order logic restricted
to the first $n$ variables. 
We assume familiarity with the basic notions of the (duality) theory of Boolean algebras with  operators $\sf BAO$s,  like {\it atom structures} and  {\it  complex algebras}.
A more than an adequate reference for our purpose is \cite[\S 2.5, \S 2.6,  \S 2.7]{HHbook}. However, such notions and related ones from the theory of 
$\sf BAO$s used in the sequel 
will be recalled fully in due time. 

{\it Unless otherwise indicated, $n$ will be a finite ordinal $>2$.}

In algebraic logic one can study certain ``particular'' algebraic structures 
(or simply algebras) and constructions on such algebras in isolation, 
as objects of interest in their own right and 
go on to discuss questions which naturally arise independently, regardless of their possible initial 
outcome in connection to  logic.   
But more often than not, such  seemingly purely algebraic investigations   
shed light on the logic side to which they could have owed their birth.   
Indeed, results in algebraic logic are most significant when they have non--trivial impact on (first order) logic, via 
existing `bridge theorems'.  
But sometimes a particular purely algebraic result, 
can dictate  even building a new bridge or more. In this paper, we build new bridges between the algebraic notions of {\it atom--canonicity and relativized complete representations} 
for varieties of $\CA_n$s, 
on the one hand, and the metalogical one of a restrcited version of the Henkin--orey omitting types, namely, Vaught's theorem on existence of countable models for countable atomic theories, 
for the 
so--called packed fragments of $L_n$.

Atom--canonicity is an important {\it persistence property} in various modal logics, that applies to the class of their 
modal algebras; for example the variety $\RCA_n$ viewed as the class of modal algebras of the (modal formalism) of $L_n$ is not
atom--canonical, 
because applying the the complex algebra operator to 
atom structures of $\RCA_n$s, can take us outside $\RCA_n$, more succintly, 
$\Cm(\At\RCA_n)\nsubseteq \RCA_n$.
 
As the name suggests, complete representability is a {\it semantical} notion. A {\it relativized representation} of $\A\in \CA_n$ is an injective homomorphism
$f:\A\to \wp(V)$ where $V\subseteq {}^nU$ for some non--empty set $U$ and the operations on $\wp(V)$ are the concrete operations 
defined
like in cylindric set algebras of dimension $n$ relativized to $V$. In this case we say that $\A$ is {\it represented on $V$ via $f$}, or simply represented on $V$. 
An {\it ordinary representation}, or just a representation  of $\A\in \CA_n$, 
is a relativized representation $f:\A\to \wp(V)$  of $\A$, such that the top element $V$ on which $\A$ is represented, is a disjoint union of cartesian squares, 
that is $V=\bigcup_{i\in I}{}^nU_i$, $I$ is a non-empty indexing set, $U_i\neq \emptyset$  
and  $U_i\cap U_j=\emptyset$  for all $i\neq j$. Such a $V$ is called a {\it generalized cartesian space} of dimension $n$.

A {\it (relativized) complete representation} of $\A\in \CA_n$ on $V\subseteq {}^nU$, is a (relativized) representation of $\A$ on $V$ via $f$, that preserves  arbitrary sums carrying
them to set--theoretic unions, that is the representation $f:\A\to \wp(V)$ is required to satisfy   
$f(\sum S)=\bigcup_{s\in S}f(s)$ for all $S\subseteq \A$ such that $\sum S$ exists. In this case, we 
say that $\A$ is {\it completely represented on $V$ via $f$}, or simply completely represented on $V$. 
If $\A\in \CA_n$, then a relativized representation of $\A$ on $V$ via $f$ is not necessarily a {\it complete} relativized  representation of $\A$ on $V$ via (the same) $f$. 
It is known that if $\A\in \CA_n$ has
a relativized complete representation, then the Boolean reduct of $\A$ is atomic \cite{HH}; below every non-zero element of $\A$ there is an atom (a minimal non-zero element). 
It is also known that there are countable and atomic $\RCA_n$s,
that have no complete representations on generalized cartesian spaces of dimension $n$. So atomicity is necessary but not sufficient for complete 
representability of $\CA_n$s on generalized cartesian spaces of the same dimension $n$.
In fact, the class of completely representable $\CA_\alpha$s on cartesian spaces of dimension $\alpha$, 
when $\alpha>2$ (where generalized cartesian spaces for infinite dimensions is defined like the finite dimensional case), 
is not even elementary \cite[Corollary 3.7.1]{HHbook2}.
It can happen that an atomic $\RCA_n$ may not allow a complete representation on a generalized cartesian space, 
but can be completely represented {\it in  relativized sense} 
on some set $V$ of $n$--ary sequences.

The subtle semantical phenomena of (relativized) complete representability is closely
related to the algebraic notion of  atom--canonicity of (certain supervarieties of) $\RCA_n$ 
(like $\bold S\Nr_n\CA_m$ for $2<n<m<\omega$),
and to the metalogical property of  omitting types in $n$--variable fragments of first order logic
\cite[Theorems 3.1.1-2, p.211, Theorems 3.2.8, 9,10]{Sayed}, 
when non--principal types are omitted with respect to (relativized) semantics.

The typical question is: given a $\A\in \CA_n$ and a family $(X_i: i\in I)$ of subsets of $\A$ ($I$ a non--empty set),  such that $\prod X_i$ exists in $\A$ for each $i\in I$
is there a representation $f:\A\to \wp(V)$, on some $V\subseteq {}^nU$ ($U$ a non--empty set), 
that carries  this set of meets to set theoretic intersections, in the sense that  $f(\prod X_i)=\bigcap_{i\in I}f(x)$ for all $i\in I$? 
When the algebra $\A$ is countable, $|I|\leq \omega$ and $\prod X_i=0$ for all $\in I$, this is an algebraic version of an  omitting types theorem; 
the representation $f$ {\it omits the given set of meets (or non-principal types) on $V$.}  
When it is {\it only one meet} consisting of co-atoms, in an atomic algebra, such a representation $f$ will 
be a {\it complete representation on $V$}, and 
this is equivalent to that $f(\prod X)=\bigcap_{x\in X}f(x)$ for all $X\subseteq \A$ 
for which $\prod X$ exists in $\A$ \cite{HH}. The last condition, when we require that $V$ is a generalized cartesian space of dimension $n$, is an algebraic version 
of Vaught's theorem for first order logic, namely, the unique (up to isomorphism) atomic, 
equivalently prime,  model of a countable 
atomic theory omits all 
non--principal types. 

In proving non--atom canonicity of $\RCA_n$ ($2<n<\omega$), one typically constructs an atom structure $\bf At$, such that 
{\it the term algebra}, in symbols $\Tm\bf At$, is in $\RCA_n$, while its \de\ completion, the complex algebra of its atom structure, in symbols 
$\Cm\bf At$, is outside $\RCA_n$. Here $\Tm\bf At$ is the subalgebra of $\Cm \bf At$ generated by the atoms, and $\Cm\bf At$ is the smallest complete $\CA_n$
which contains $\Tm \bf At$ as a dense subalgebra.

This atom structure is {\it weakly}, but not {\it strongly} representable, in the sense that
it is the atom structure of both a representable and a non--representable algebra demonstrating that the 
representability of an atomic algebra is not determined by the structure of its atom structure.
The algebra $\Tm\bf At$, though both atomic and representable, {\it cannot be completely representable on a cartesian space of dimension $n$}, because a
complete representation of $\Tm \bf At$ on such a space, necessarily induces 
an ordinary representation of $\Cm \bf At$ on the same space.

On the other hand, it is well known that Vaught's theorem is a consequence of the classical Orey--Henkin omitting types theorem. 
Algebraically, the well--known proof (briefly) goes as follows: Let $T$ be a countable atomic theory. Let $\A=\Fm_T$ be 
the Tarski--Lindenbaum quotient algebra corresponding to $T$.   
Then it is not hard to show that,  from the definition of the atomicty of $T$, 
the Boolean algebra $Nr_m\A=\{a\in \A: \c_ia=a,  \forall i\in \omega\setminus m\}$ will be 
atomic for all $m\in \omega$. The  required atomic (prime) model is 
the model omitting the familiy $(X_m: m\in \omega)$
where $X_m$ is the set of co--atoms in $Nr_m\A$. 
Observe that $\prod X_m=0$ 
for all $m\in \omega$, so the $X_m$s are non--principal types that can be omitted 
simultaneously.

Proving failure of Vaught's theorem for $L_n$ which a restricted version of  an omitting types theorem (when one asks for models omiting only the type of co--atoms), one may 
algebraically 
construct a countable atomic $\RCA_n$ that is not completely representable even in a relativized sense. 
To get sharper results, 
one shows that 
varieties like $\bold S\Nr_n\CA_{n+k}(\supsetneq \RCA_n$) for some $k<\omega$ are not atom--canonical.
Both tasks can (and will be done) for $2<n<\omega$. 

For the first task, we construct a countable and atomic $\A\in \RCA_n$ having no complete {\it relativized $n+3$--flat} representations (the notion of {\it flatness} will be defined in a while), 
{\it a fortiori} $\A$ will have no complete
representation.
For the second task, we show that there exists an $\RCA_n$ atom structure $\bf At$ such that 
$\Cm{\bf At}\notin \bold S\Nr_n\CA_{n+3}$. The last result is sharper, because as demonstrated below, 
it implies the former. The implication here is like in the classical case: The existence of a complete $n+3$--flat representation of
$\A(\in \RCA_n)$ implies that $\Cm\At\A$ has an $n+3$--flat representation, where $\At\A$ is the atom structure of $\A$, which is eqiuvalent to 
that $\Cm\At\A\in \bold S\Nr_n\CA_{n+3}$.   

The algebraic methodology and results used here are tailored to studying Vaught's theorem for the so--called {\it clique--guarded fragments} of $n$--variable first order 
logic, which is an equivalent formulation of its packed fragments. Here one {\it guards semantics} so that the semantics considered  are no longer 
the usual Tarskian semantics. Tarskian  semantics  become rather a special or/ and  limiting case. 
Semantics are {\it locally guarded} where `locality' is measured by a parameter $m$ with $2<n<m \leq \omega$.
This $m$--locally well behaved semantics provides an {\it optimal fit} with the syntactically defined variety $\bold S\Nr_n\CA_m$; reflected 
in a  completeness theorem. Any $\A\in \bold S\Nr_n\CA_m$ posses an  {\it $m$--flat 
representation.}  When $m=\omega$, this $m$--flat repesentation can be identified with an ordinary one. 
Henkin's neat embedding theorem stating that  $\bold S\Nr_n\CA_{\omega}=\RCA_n$ is then the limiting case dealing with a natural (algebraizable) extension 
of Godel's completeness theorem 
with respect to usual Tarskian semantics. More succintly, algebras in the variety $\bold S\Nr_n\CA_{\omega}$ 
can be represented on generalized cartesian spaces of dimension $n$.

The first part of the paper is dominated by negative results to the effect that Vaught's theorem 
fails in the situation at hand. Typically: 
{\it The non-principal type of co--atoms an atomic countable $L_n$ theory $T$ ($2<n<\omega$) cannot be omitted with respect to 
a notion of generalized semantics.} 
The semantics dealt with are what 
we call {\it $m$--clique--guarded} semantics, or simply clique guarded semantics.  
Algebraically: For $2<n<m<\omega$, we construct a countable and atomic $\A\in \RCA_n$ lacks  a {\it complete $m$--flat representation} or even a {\it complete $m$--square one}, 
which is a  substantialy weaker notion when $m<\omega$;  an $m$--flat model is $m$--square but the converse is not true.
Finer results are obtained, if we require that the algebra $\A$, in addition, is in $\Nr_n\CA_l$ for some $2<n<l<m$. In this case
the type $\Gamma$ will be realizable in every countable $m$--flat model of $T$, but cannot be isolated by a formula using $\leq l$ variables. 
By the Orey--Henkin omitting types theorem, 
being realizable in every $\omega$--flat (equivalently square) ordinary countable model, we know that $\Gamma$ is isolated by a formula.
So this formula, referred to as {\it a witness}, must use $>l$ variables.

\subsection{Guarding semantics} 

Fix $n<\omega$. One can view modal logics as fragments of first order logic. But on the other hand,  one can 
turn the glass around and give first order logic with $n$ variables a modal formalism, by viewing assignments as worlds, and 
giving the existential quantifier the most
prominent citizen in first order logic the following familar modal pattern (*) :
$$V, s\models \exists x_i\phi \Longleftrightarrow (\exists s)(s\equiv _i t)\&V, s\models \phi,$$
where $V\subseteq {}^n\M$ for some non--empty set $\M$ is the set of states or worlds and $s\in V$ is  a state.
Here existential quantifiers are viewed as modalities (or diamonds).

Guarded semantics are provided by {\it generalized models}, where assignments are allowed from arbitrary sets of $n$--ary sequences $V\subseteq {}^n\M$ (some non empty set $\M$). 
Semantics for the Boolean connectives are defined the usual way,
and the semantics for the existential quantifiers are as specified in (*).
Tarskian semantics becomes the special case when $V$ is `square', namely, $V={}^n\M.$

Guarding is expressed algebraically by {\it relativization}.  
Relativization is a technique that proved extremely potent  in obtaining positive results in both algebraic and modal logic. 
The technique originated 
from research by Istvan N\'emeti  dedicated to the class
of {\it relativized} set algebras, ${\sf Crs}_n$ for short, whose top elements are arbitrary sets of $n$--ary sequences
$V\subseteq {}^n\M$--rather than the whole of $^n\M$ like in cylindric set algebras of dimension $n$-- where $\M$ a non--empty set and $n<\omega$ is the dimension.
The concrete interpretation of the operations are defined like cylindric set algebras; 
Boolean meet is interpreted as intersection, cylindrifiers as projections, diagonal elements 
as  equality \ldots etc,  but  all such operations are {\it relativized} to the top element $V$. 

This type of research was initiated to sidestep resilient wild and unruly behaviour of 
the variety of representable cylindric algebras of dimension at least three, defined as subdirect products of set algebras of the same dimension, 
like undecidability of its equational theory and non-finite axiomatizability.
N\'emeti proved, in a seminal result,  that the universal theory of the variety ${\sf Crs}_n$ is decidable.
The corresponding multi--modal
logic exhibits nice modal behaviour and is regarded as the base for
proposing the so--called 
{\it guarded fragments} of first order logic by Andr\'eka et al. \cite{v}.

Dropping the historically ad-hoc assumption that set algebras should have ` squares' as their top 
elements proved immensely fruitful.
This way of {\it global guarding semantics} has led to the discovery of a whole landscape of multimodal logics having nice modal behaviour (like decidability) 
with the multimodal logic whose modal algebras are the class of relativized set algebras `at the bottom' 
and Tarskian semantics with its undesirable properties (like undecidability) is only the top of an iceberg.
Below the surface a treasure of nice multimodal logics was discovered.  
Relativized semantics  has led to many other nice modal
$n$--variable fragments of first order logic, like the 
loosely guarded, 
clique guarded (a disguised  formalism of packed fragments) 
of first order logic \cite{v}, \cite[Definitions 19.1, 19.2, 19.3,  p. 586-89]{HHbook}.

\subsection{Clique guarding semantics}
 
Fix $2<n<\omega$. Unless otherwise indicated, all theories considered in this paper are countable. Let $T$ be an $L_n$ theory in a certain signature. Let $\A$ be the Tarski--Lindenbaum quotient 
algebra $\Fm_{\sim_T}$ where the quotient is taken relative to the usual semantics, that is $\phi\sim_T \psi\iff T\models \phi\leftrightarrow \phi$.
In particular, $\A\in \RCA_n$. 
Assume that $n<m<\omega$. Let $\M$ be the base of a relativized representation of $\A$, that is, there exists an injective
homomorphism $f:\A\to \wp(V)$, where $V\subseteq {}^n\M$ and $\bigcup_{s\in V} \rng(s)=\M$. Here we identify 
notationally the set algebra with universe $\wp(V)$ with
its universe $\wp(V)$. 
{\it We say that $\M$ is a generalized model of $T$.}

We write $\M\models a(s)$ for $s\in f(a)$. Let  $\L(\A)^m$ be the first order signature using $m$ variables
and one $n$--ary relation symbol for each element in $A$ and $\L(\A)^m_{\infty, \omega}$ be the infinitary extension of $\L(\A)^m$ allowing infinite conjunctions. 
Then {\it an $n$--clique} is a set $C\subseteq M$ such
$(a_1,\ldots, a_{n-1})\in V=1^{\M}$
for distinct $a_1, \ldots, a_{n}\in C.$
Let
${\sf C}^m(\M)=\{s\in {}^m\M :\rng(s) \text { is an $n$--clique}\}.$
Then ${\sf C}^m(\M)$ is called the {\it $n$--Gaifman hypergraph of $\M$}, with the $n$--hyperedge relation $1^{\M}$.

\begin{enumarab}

\item The {\it clique guarded semantics $\models_c$} are defined inductively. For atomic formulas and Boolean connectives they are defined
like the classical case and for existential quantifiers
(cylindrifiers) they are defined as follows:
for $\bar{s}\in {}^m\M$, $i<m$, $\M, \bar{s}\models_c \exists x_i\phi$ $\iff$ there is a $\bar{t}\in {\sf C}^m(\M)$, $\bar{t}\equiv_i \bar{s}$ such that
$\M, \bar{t}\models \phi$.

\item We say that $\M$ is  {\it $m$--square} model of $T$,
if  $\bar{s}\in {\sf C}^m(\M), a\in \A$, $i<n$,
and   $l:n\to m$ is an injective map, $\M\models {\sf c}_ia(s_{l(0)},\ldots, s_{l(n-1)})$
$\implies$ there is a $\bar{t}\in {\sf C}^m(\M)$ with $\bar{t}\equiv _i \bar{s}$,
and $\M\models a(t_{l(0)}, \ldots, t_{l(n-1)})$.

\item $\M$ is said to be {\it (infinitary) $m$--flat} model of $T$ if  it is $m$--square and
for all $\phi\in (\L(\A)_{\infty, \omega}^m) \L(\A)^m$, 
for all $\bar{s}\in {\sf C}^m(\M)$, for all distinct $i,j<m$,
we have
$\M\models_c [\exists x_i\exists x_j\phi\longleftrightarrow \exists x_j\exists x_i\phi](\bar{s}).$
\end{enumarab}

Extending the definition the obvious way one defines an $m$--relativized representation 
$\M$ of $\A\in \CA_n$, whether $m$--square, or $m$--flat. 
Clique guarded fragments are closely related to {\it packed fragments} of first order logic as illustrated next.
Let $2<n<m<\omega$, $\A\in \CA_n$ and 
$\M$ be an $m$--flat representation of $\A$.
Then 
$$\M, s\models_c \phi  \iff\  \M,s\models {\sf packed}(\phi),$$
for all $s\in {\sf C}^n(\M)$ and every $\phi\in \L(\A)^n$, where $\sf packed(\phi$) denotes the translation of $\phi$ to the packed fragment \cite[Definition 19.3]{HHbook}.
In this sense, {\it the clique guraded fragments}, which are the $n$--variable fragments of first order order with clique 
guarded semantics, 
are an alternative formulation of {\it the $n$--variable 
packed fragments}  of first order logic \cite[\S 19.2.3]{HHbook}.

\subsection{Relativized versions of Vaught's theorem}

Fix $2<n\leq l<m\leq \omega$. 
We investigate the validity of a formula $\Phi(l, m)$, which is a relativized version of Vaught's theorem.
The first parameter $l$ measures in an exact sense 
how close we are to $L_{\omega, \omega}$. 
On the other hand, the second parameter $m$ measures how {\it locally well behaved} do we want our  models to be? 
The parameter $m$ measures {\it the degree of squareness} of the required models.  We look for atomic models that are $m$--square. 
(As indicated in the abstract, an $\omega$--square countable model 
is just an ordinary countable model and vice versa.) 

For parameters $n\leq l<m\leq \omega$ 
the formula $\Phi(l, m)$ says:  

{\sf There exists an atomic $\L_l$ countable and complete theory $T$, such that the non--principal type of co--atoms, 
cannot be omitted in an $m$--square model of $T$.}

The formula $\Phi(l, m)$ depends on $n$ which is fixed to be $2<n<\omega$; the reference to $n$ is omitted not to clutter notation. 
By $\L_{l}$ we understand 
an $n$--variable fragment of $L_{\omega, \omega}$ allowing all connectives that are definable by first order formulas using the first 
$l$ variables  with only the first $n$ occuring as 
free variables. This will be explained in more detail in the text, but for the time being one can view $\L_l$ as an $n$--variable logic 
having the same expressive power as $L_l$.   
 
By $T$ atomic, we mean 
that the Boolean reduct of Tarski--Lindenbaum quotent algebra $\Fm_T$ is atomic. The type $\Gamma$ is defined to be 
$\{\phi: (\neg \phi)_T\in \At\Fm_T\}$. The theory $T$ is complete means that for any sentence $\phi$ in the signature of $T$, 
$T\models \phi$ or $T\models \neg \phi$.
This, in turn,  is equivalent to that $\Fm_T(\in \RCA_n$) 
is simple (has no proper congruences). 

While $\Phi(l, m)$ says that for an $\L_l$ atomic theory, the non--principal type consisting of co--atoms may not be omitted in an $m$--square model,
Vaught's theorem for $L_{\omega, \omega}$ says the exact opposite: In an atomic countable theory, the family of non--principal 
type of co--atoms 
can, by the Orey--Henkin omitting types theorem, be omitted in a countable model (the atomic model).
It is therefore reasonable to let $\VT(l, m):=\neg \Phi(l, m)$ and read it as `Vaught's theorem holds at $l$ and $m$', stipulating 
that ${\sf VT}(\omega, \omega)$ is just Vaught's theorem for $L_{\omega, \omega}$:
Countable  atomic theories have countable atomic models. 

In this paper,  we argue that for the permitted values $n\leq l, m\leq \omega$, namely, 
for $n\leq l<m\leq \omega$  and $l=m=\omega$, 
$\VT(l, m)\iff l=m=\omega.$ 
From known algebraic results like non-atom--canonicity of $\RCA_n$ \cite{Hodkinson} and non-first order definability of the class 
of completely representable $\CA_n$s \cite{HH},  it can be easily inferred that
$\VT(n, \omega)$ is false, that is, Vaught's theorem fails for $L_n$ with respect to square Tarskian semantics. One can find countable atomic $L_n$ 
theories having no 
atomic models.
But we can go further. From sharper algebraic results, we prove  
many other special cases for specific values of $l$ and $m$ that support the last equivalence.
Two prominent cases are when $n\leq l<\omega$ and $m=\omega$ and 
when $l=n$ and $m\geq n+3$;  heuristically, when $l\to \omega$ and $m=\omega$ (the $x$ axis) and when $l=n$ and $m\to \omega$ (the $y$ axis).

{\it The first result is proved by showing that there exists an atomic, simple and countable $\A\in \Nr_n\CA_l\cap \RCA_n$ that is not completely representable, 
while the second follows from showing that
there exists an atomic, simple and countable $\A\in \RCA_n$ such that $\Cm\At\A$ lacks an $n+3$--square representation.
This implies, but is not  equivalent to,   that $\Cm{\At\A}\notin \bold S\Nr_n\CA_{n+3}$ and the last is equivalent to that $\Cm\At\A$ does not have an $n+3$--flat representation.}  
In particular,  the varieties $\bold S\Nr_n\CA_m$, for any $m\geq n+3$,  are not atom--canonical. 
Such results show that the omitting types theorem fails dramatically,  even if we 
`clique guard' semantics. 

Conversely, we prove positive omitting types theorems 
for $L_n$ with respect to Tarskian semantics, 
by imposing conditions on theories considered such as quantifier elimination 
and types  we hope to omit, such as  maximality (completeness). 
In such cases, we find, like in the classical Orey--Henkin omitting types theorems, a countable model omitting 
the given family of non--principal types.
It is well known that in the omitting types theorem, countability of the theory and non--principal 
types to be omitted, cannot be dispensed with, due to its intimate connection with the 
Baire category theorem for Stone spaces.  
Nevertheless, we will have occasion to provide models (not necessarly countable), omitting possibly 
uncountably many non--principal types in possibly uncountable $L_n$ 
theories. 
\subsection*{Layout}

\begin{itemize}

\item  For $2<n\leq l<m\leq \omega$, in $\S3$ we present a chain of implications starting from the existence of certain finite relation algebras
having so-called strong $l$--blur and no $m$--flat representations leading up to $\neg {\sf VT}(l, m)$, cf. theorem \ref{main}. Using such implications we prove
$\neg {\sf VT}(l, \omega)$ for $l<\omega$, and $\neg {\sf VT}(n, m)$ for $m\geq n+3$, cf. theorem \ref{can}. 
Our results are next presented in the framework of omitting types theorems for clique guarded fragments of $L_n$ cf. theorem \ref{OTT2}.

\item In $\S4$ we prove several positive omitting types theorems for $L_n$ ($2<n<\omega$) 
and for infinitary algebraizable (in the standard Blok--Pigozzi sense) extensions of $L_{\omega, \omega}$. Several
(counter)examples are presented, mostly in algebraic form, 
marking the boundaries of our results, cf. theorems \ref{i} and \ref{ii}.

\item In $\S5$ Vaught's theorem is approached algebraically, 
where the semantical notion of complete representability for any countable dimension is characterized via 
special neat embeddings, cf. theorem \ref{iii}, 
and 
several positive results on complete representability for reducts of polyadic equality algebras 
are obtained, theorems \ref{pa}, \ref{paa}, \ref{paaa}, and \ref{suf}. 

\item In $\S6$, in contrast to the negative results obtained in $\S3$ with respect to clique guarded semantics, positive results on Vaughts theorem and omitting types are obtained
for $L_n$ and $L_{\omega, \omega}$ with respect to {\it globally guarded semantics}, 
theorems \ref{one}, \ref{two} and \ref{three}. Here top elements of the modal set algebras of dimension $n\leq \omega$, are unions of 
$n$--dimensional cartesian spaces 
that are not necessarily disjoint. 

\item  In $\S7$, our treatment will be purely algebraic. We introduce algebraic counterparts of the modal 
fragments studied in $\S3$,  and elaborate on the metalogical results obtained from an algebraic perspective, cf. 
theorem \ref{main2} and corollary \ref{main3}.

\end{itemize}

\section{Preliminaries}

In our treatment of the notion of neat reducts, and the related one of neat embeddings, 
we follow the terminolgy and notation of \cite{HMT2, Sayedneat}.
The notion of {\it neat reducts} and the related one of {\it neat embeddings} are both important in algebraic logic for the
simple reason that both notions are very much tied
to the notion of representability, via the so--called neat embedding theorem of Henkin's which says that (for any ordinal 
$\alpha$),  we have $\sf RCA_{\alpha}=\bold S\Nr_{\alpha}\CA_{\alpha+\omega}$.
\begin{definition} 
Assume that $\alpha<\beta$ are ordinals and that 
$\B\in \CA_{\beta}$. Then the {\it $\alpha$--neat reduct} of $\B$, in symbols
$\Nr_{\alpha}\B$, is the
algebra obtained from $\B$, by discarding
cylindrifiers and diagonal elements whose indices are in $\beta\setminus \alpha$, and restricting the universe to
the set $Nr_{\alpha}B=\{x\in \B: \{i\in \beta: {\sf c}_ix\neq x\}\subseteq \alpha\}.$
\end{definition}

It is straightforward to check that $\Nr_{\alpha}\B\in \CA_{\alpha}$. 
If $\A\in \CA_\alpha$ and $\A\subseteq \mathfrak{Nr}_\alpha\B$, with $\B\in \CA_\beta$, then we say that $\A$ {\it neatly embeds} in $\B$, and 
that $\B$ is a {\it $\beta$--dilation of $\A$}, or simply a {\it dilation} of $\A$ if $\beta$ is clear 
from context. 
For relation algebra reducts we follow \cite{HHbook2}. 
If $\beta>2$, $\B\in \CA_{\beta}$ and 
$\R=\Ra\B$ is the  algebra with $\RA$ signature defined like in \cite[Definition 5.3.7]{HMT2}, 
we refer to $\B$ also as a $\beta$--dilation of $\R$; 
if $\beta>3$, then it is known that $\Ra\B\in \RA$ \cite[Theorem 5.3.8]{HMT2}.

\subsection{Atom structures and atom--canonicity}
 
We recall the notions of {\it atom structures} and {\it complex algebra} in the framework of Boolean algebras 
with operators of which $\CA$s  are a special case \cite[Definition 2.62, 2.65]{HHbook}. 
The action of the non--Boolean operators in a completely additive (where operators distribute over arbitrary joins componentwise) 
atomic Boolean algebra with operators, $(\sf BAO)$ for short, is determined by their behavior over the atoms, and
this in turn is encoded by the {\it atom structure} of the algebra.

\begin{definition}\label{definition}(\textbf{Atom Structure})
Let $\A=\langle A, +, -, 0, 1, \Omega_{i}:i\in I\rangle$ be
an atomic $\sf BAO$ with non--Boolean operators $\Omega_{i}:i\in I$. Let
the rank of $\Omega_{i}$ be $\rho_{i}$. The \textit{atom structure}
$\At\A$ of $\A$ is a relational structure
$$\langle At\A, R_{\Omega_{i}}:i\in I\rangle$$
where $At\A$ is the set of atoms of $\A$ 
and $R_{\Omega_{i}}$ is a $(\rho(i)+1)$-ary relation over
$At\A$ defined by
$$R_{\Omega_{i}}(a_{0},
\cdots, a_{\rho(i)})\Longleftrightarrow\Omega_{i}(a_{1}, \cdots,
a_{\rho(i)})\geq a_{0}.$$
\end{definition}
\begin{definition}(\textbf{Complex algebra})
Conversely, if we are given an arbitrary first order structure
$\mathcal{S}=\langle S, r_{i}:i\in I\rangle$ where $r_{i}$ is a
$(\rho(i)+1)$-ary relation over $S$, called an {\it atom structure}, we can define its
\textit{complex
algebra}
$$\Cm(\mathcal{S})=\langle \wp(S),
\cup, \setminus, \phi, S, \Omega_{i}\rangle_{i\in
I},$$
where $\wp(S)$ is the power set of $S$, and
$\Omega_{i}$ is the $\rho(i)$-ary operator defined
by$$\Omega_{i}(X_{1}, \cdots, X_{\rho(i)})=\{s\in
S:\exists s_{1}\in X_{1}\cdots\exists s_{\rho(i)}\in X_{\rho(i)},
r_{i}(s, s_{1}, \cdots, s_{\rho(i)})\},$$ for each
$X_{1}, \cdots, X_{\rho(i)}\in\wp(S)$.
\end{definition}
It is easy to check that, up to isomorphism,
$\At(\Cm(\mathcal{S}))\cong\mathcal{S}$ alway. If $\A$ is
finite then of course
$\A\cong\Cm(\At\A)$. 
For algebras $\A$ and $\B$ having the same signature expanding that of Boolean algebras, 
we say that $\A$ is dense in $\B$ if $\A\subseteq \B$ and for all non--zero $b\in \B$, there is a non--zero 
$a\in A$ such that $a\leq b$.
An atom structure will be denoted by $\bf At$.  An atom structure $\bf At$ has the signature of $\CA_\alpha$, $\alpha$ an ordinal, 
if  $\Cm\bf At$ has the signature of $\CA_\alpha$. 

\begin{definition}\label{canonical} 
Let $V$ be a variety of $\sf BAO$s. Then $V$ is {\it atom--canonical} if whenever $\A\in V$ and $\A$ is atomic, then $\mathfrak{Cm}\At\A\in V$.
The {\it  \de\ completion} of  $\A\in \V$, is the unique (up to isomorphisms that fix $\A$ pointwise)  complete  
$\B\in \K_n$ such that $\A\subseteq \B$ and $\A$ is {\it dense} in $\B$. 
\end{definition}

From now on fix $2<n<\omega$. If $\A\in \CA_n$ is atomic,  then 
the complex algebra of its atom structure, in symbols $\mathfrak{Cm}\At\A$ 
is the {\it \de\ completion of $\A$.}
If $\A\in \CA_n$, then its atom structure will be denoted by $\At\A$ with domain the set of atoms of $\A$ denoted by $At\A$.

{\it Non atom--canonicity} can be proved 
by finding {\it weakly representable atom structures} that are not {\it strongly representable}.

\begin{definition} An atom structure $\bf At$ of dimension $n$ is {\it weakly representable} if there is an atomic 
$\A\in \RCA_{n}$ such that $\At\A=\bf At$.  The atom structure  $\bf At$ is {\it strongly representable} if for all $\A\in \CA_{n}$, 
$\At\A={\bf At} \implies \A\in {\sf RCA}_n$.
\end{definition}
These two notions (strong and weak representability) do not coincide for cylindric algebras as proved by Hodkinson \cite{Hodkinson}.
In \cite{mlq} we generalize Hodkinson's result by showing that for there are two atomic $\CA_n$s 
sharing the same atom structure, one is representable and the other its \de\ completion is even outside 
$\bold S{\sf N}r_n\CA_{n+3}(\supsetneq {\sf RCA}_n$). In particular, there is a complete algebra outside $\bold S{\sf Nr}_n\CA_{n+3}$ 
having a dense representable  subalgebra, so that 
$\bold S{\sf Nr}_n\CA_{n+3}$ is not atom--canonical. This construction will be used in theorem \ref{can}.
 
\subsection{Networks and games} 

To define certain games to be used in the sequel,  
we recall the notions of {\it atomic networks} and {\it atomic games} \cite{HHbook, HHbook2}. 
Let $i<n$. For $n$--ary sequences $\bar{x}$ and $\bar{y}$ $\iff \bar{y}(j)=\bar{x}(j)$ for all $j\neq i$
\begin{definition}\label{game} Fix finite $n>1$. 
\begin{enumarab}

\item An {\it $n$--dimensional atomic network} on an atomic algebra $\A\in \QEA_n$  is a map $N: {}^n\Delta\to  \At\A$, where
$\Delta$ is a non--empty set of {\it nodes}, denoted by $\nodes(N)$, satisfying the following consistency conditions for all $i<j<n$: 
\begin{itemize}
\item If $\bar{x}\in {}^n\nodes(N)$  then $N(\bar{x})\leq {\sf d}_{ij}\iff x_i=x_j$,
\item If $\bar{x}, \bar{y}\in {}^n\nodes(N)$, $i<n$ and $x\equiv_i y$, then  $N(\bar{x})\leq {\sf c}_iN(\bar{y})$,
\end{itemize}
For $n$--dimensional atomic networks $M$ and $N$,  we write $M\equiv_i N\iff M(y)=N(y)$ for all $\bar{y}\in {}^{n}(n\sim \{i\})$.

\item   Assume that $\A\in \QEA_n$ is  atomic and that $m, k\leq \omega$. 
The {\it atomic game $G^m_k(\At\A)$, or simply $G^m_k$}, is the game played on atomic networks
of $\A$ using $m$ nodes and having $k$ rounds \cite[Definition 3.3.2]{HHbook2}, where
\pa\ is offered only one move, namely, {\it a cylindrifier move}: 

Suppose that we are at round $t>0$. Then \pa\ picks a previously played network $N_t$ $(\nodes(N_t)\subseteq m$), 
$i<n,$ $a\in \At\A$, $x\in {}^n\nodes(N_t)$, such that $N_t(\bar{x})\leq {\sf c}_ia$. For her response, \pe\ has to deliver a network $M$
such that $\nodes(M)\subseteq m$,  $M\equiv _i N$, and there is $\bar{y}\in {}^n\nodes(M)$
that satisfies $\bar{y}\equiv _i \bar{x}$ and $M(\bar{y})=a$.  

\item We write $G_k(\At\A)$, or simply $G_k$, for $G_k^m(\At\A)$ if $m\geq \omega$.

\item The $\omega$--rounded game $\bold G^m(\At\A)$ or simply $\bold G^m$ is like the game $G_{\omega}^m(\At\A)$ 
except that \pa\ has the option 
to reuse the $m$ nodes in play.
\end{enumarab}
\end{definition}

\section{Vaught's theorem holds  for $L_n$ with respect to clique guarded semantics}

Throughout this section, unless otherwise indicated $n$ is a finite ordinal $>2$. 
Results in algebraic logic are most attractive when they lend themselves to (non--trivial) applications in (first order) logic.
In this section, we adress algebraically a restricted version of the 
omitting types theorems, namely, Vaught's theorem on existence of atomic models for atomic theories, in the framework of 
the {\it clique guarded} $n$--variable fragments
of first order logic.  In the introduction, 
we already  defined the notion of {\it clique guarded semantics}
which we extend the obvious way to all algebras (not just Tarski-Lindenbaum quotient algebras corresponding to $L_n$ theories). 

\begin{definition} Let $\M$ be the base of a relativized representation of $\A\in \CA_n$ witnessed by an injective
homomorphism $f:\A\to \wp(V)$, where $V\subseteq {}^n\M$ and $\bigcup_{s\in V} \rng(s)=\M$. 
We write $\M\models a(s)$ for $s\in f(a)$. Let  $\L(\A)^m$ be the first order signature using $m$ variables
and one $n$--ary relation symbol for each element in $A$. Let ${\sf C}^m(\M)$ be the 
$n$--Gaifman hypergraph, and let $\models_c$ be the clique guarded semantics 
as defined in the introduction.
\begin{itemize}

\item We say that $\M$ is  {\it an $m$--square representation} of $\A$,
if  for all $\bar{s}\in {\sf C}^m(\M), a\in \A$, $i<n$,
and   injective map $l:n\to m$, whenever $\M\models {\sf c}_ia(s_{l(0)},\ldots, s_{l(n-1)})$, then there is a $\bar{t}\in {\sf C}^m(\M)$ with $\bar{t}\equiv _i \bar{s}$,
and $\M\models a(t_{l(0)}, \ldots, t_{l(n-1)})$.

\item We say that $\M$ is {\it a complete $m$--square representation of $\A$ via $f$}, or simply a complete representation of $\A$ if 
$f(\sum X)=\bigcup_{x\in X} f(x)$, for all 
$X\subseteq \A$ for which $\sum X$ exists.

\item We say that $\M$ is an {\it (infinitary) $m$--flat representation} of $\A$ if  it is $m$--square and
for all $\phi\in (\L(\A)_{\infty, \omega}^m) \L(\A)^m$, 
for all $\bar{s}\in {\sf C}^m(\M)$, for all distinct $i,j<m$,
$\M\models_c [\exists x_i\exists x_j\phi\longleftrightarrow \exists x_j\exists x_i\phi](\bar{s})$.
Complete representability is defined like for squareness.
\end{itemize}
\end{definition}

It is straightforward to show, like in the classical case, 
that $\A$ has a complete $m$--square complete representation $\M$ via $f$ 
$\iff$ $\A$ is atomic and $f$ is {\it atomic} in the sense 
that $\bigcup_{x\in \At\A} f(x)=1^{\M}$.

Observe that if $T$ is an $L_n$ theory, then $\M$ is  an {\it $m$--square model of $T$} (as defined in the introduction) $\iff \M$ is an {\it $m$--square 
representation of the Tarski--Lindenbaum quotient algebra $\Fm_T$.}

We will construct cylindric agebras from atomic relation algebras that posses  {\it cylindric basis} using so 
called blow up and blur constructions, an indicative term introduced in \cite{ANT}.

\begin{definition} Let $\R$ be an atomic  relation algebra.  An {$n$--dimensional basic matrix}, or simply a matrix  
on $\R$, is a map $f: {}^2n\to \At\R$ satsfying the 
following two consistency 
conditions $f(x, x)\leq \Id$ and $f(x, y)\leq f(x, z); f(z, y)$ for all $x, y, z<n$. For any $f, g$ basic matrices
and $x, y<m$ we write $f\equiv_{xy}g$ if for all $w, z\in m\setminus\set {x, y}$ we have $f(w, z)=g(w, z)$.
We may write $f\equiv_x g$ instead of $f\equiv_{xx}g$.  
\end{definition}
\begin{definition}\label{b}
An {\it $n$--dimensional cylindric basis} for an atomic relaton algebra 
$\R$ is a set $\cal M$ of $n$--dimensional matrices on $\R$ with the following properties:
\begin{itemize}
\item If $a, b, c\in \At\R$ and $a\leq b;c$, then there is an $f\in {\cal M}$ with $f(0, 1)=a, f(0, 2)=b$ and $f(2, 1)=c$
\item For all $f,g\in {\cal M}$ and $x,y<n$, with $f\equiv_{xy}g$, there is $h\in {\cal M}$ such that
$f\equiv_xh\equiv_yg$. 
\end{itemize}
\end{definition}
For the next lemma, we refer the reader to \cite[Definition 12.11]{HHbook} 
for the definition of hyperbasis for relation algebras.
For a relation algebra $\R$, we let $\R^+$ denotes its canonical extension.

\begin{lemma}\label{i} Let $\R$ be  a relation algebra and $3<n<\omega$.  Then the following hold:
\begin{enumerate}
\item $\R^+$ has an $n$--dimensional infinite basis $\iff\ \R$ has an infinite $n$--square representation.

\item $\R^+$ has an $n$--dimensional infinite hyperbasis $\iff\ \R$ has an infinite $n$--flat representation.
\end{enumerate}
\end{lemma}
\begin{proof} \cite[Theorem 13.46, the equivalence $(1)\iff (5)$ for basis, and the equivalence $(7)\iff (11)$ for hyperbasis]{HHbook}.
\end{proof} 

One can construct a $\CA_n$ in a natural way from an $n$--dimensional cylindric basis which can be viewed as an atom structure of a $\CA_n$
(like in \cite[Definition 12.17]{HHbook} addressing hyperbasis).
For an atomic  relation algebra $\R$ and $l>3$, we denote by ${\sf Mat}_n(\At\R)$ the set of all $n$--dimensional basic matrices on $\R$.
${\sf Mat}_n(\At\R)$ is not always an $n$--dimensional cylindric basis, but sometimes it is,
as will be the case described next. On the other 
hand, ${\sf Mat}_3(\At\R)$ is always a  $3$--dimensional cylindric basis; a result of Maddux's, so that 
$\Cm{\sf Mat}_3(\At\R)\in \CA_3$.

The following definition to be used in the sequel is taken from \cite{ANT}:
\begin{definition}\label{strongblur}\cite[Definition 3.1]{ANT}
Let $\R$ be a relation algebra, with non--identity atoms $I$ and $2<n<\omega$. Assume that  
$J\subseteq \wp(I)$ and $E\subseteq {}^3\omega$.
\begin{enumerate}
\item We say that $(J, E)$  is an {\it $n$--blur} for $\R$, if $J$ is a {\it complex $n$--blur} defined as follows:   
\begin{enumarab}
\item Each element of $J$ is non--empty,
\item $\bigcup J=I,$
\item $(\forall P\in I)(\forall W\in J)(I\subseteq P;W),$
\item $(\forall V_1,\ldots V_n, W_2,\ldots W_n\in J)(\exists T\in J)(\forall 2\leq i\leq n)
{\sf safe}(V_i,W_i,T)$, that is there is for $v\in V_i$, $w\in W_i$ and $t\in T$,
we have
$v;w\leq t,$ 
\item $(\forall P_2,\ldots P_n, Q_2,\ldots Q_n\in I)(\forall W\in J)W\cap P_2;Q_n\cap \ldots P_n;Q_n\neq \emptyset$.
\end{enumarab}
and the tenary relation $E$ is an {\it index blur} defined  as 
in item (ii) of \cite[Definition 3.1]{ANT}.

\item We say that $(J, E)$ is a {\it strong $n$--blur}, if it $(J, E)$ is an $n$--blur,  such that the complex 
$n$--blur  satisfies:
$$(\forall V_1,\ldots V_n, W_2,\ldots W_n\in J)(\forall T\in J)(\forall 2\leq i\leq n)
{\sf safe}(V_i,W_i,T).$$ 
\end{enumerate}
\end{definition}

One formulation of the Orey--Henkin omitting types theorem is. Given a countable consistent first order  theory $T$, 
and a type $\Gamma$ (a set of formulas containing only finitely many free variables), if $\Gamma$ is realizable in every model of $T$, then $\Gamma$ 
has to be isolated, in the sense that there exists a formula $\psi$ consistent with $T$ such that 
$T\models \psi\to \theta$ for all $\theta\in \Gamma$. We call such a formula a {\it a witness}.

Let $n\leq l<m\leq \omega$ and denote $L_{\omega, \omega}$ restricted to $n$ variables by $L_n$. 
Now we formulate formulas that are essentially equivalent to the formulas $\Phi(l, m)$ encountered in the introduction. 
Consider the formula $\Psi(l, m)$: 

{\it There is an atomic, countable and complete $L_n$ theory $T$, such that the type $\Gamma$ consisting of co--atoms is realizable 
in every $m$-- square model, but any formula (witness) isolating this type has
to contain more than $l$ variables.}

For $m<\omega$, $\Psi(l, m)_f$ is obtained from $\Psi(l, m)$ by replacing square by flat. 
$\Psi(l, \omega)_f$ is defined to be $\Psi(l, \omega)$ where the sought for atomic model is an ordinary countable one.

\begin{theorem} \label{main} Let $2<n\leq l<m\leq \omega$. Then
every item implies the immediately following one. 
\begin{enumerate}

\item There exists a finite relation algebra $\R$ algebra with a strong $l$--blur and no infinite $m$--dimensional hyperbasis, 
\item There is a countable atomic $\A\in \Nr_n\CA_l\cap \CA_n$ such that $\Cm\At\A$ does not have an 
$m$--flat representation,

\item There is a countable atomic $\A\in \Nr_n\CA_l\cap \RCA_n$ such that $\A$ has no complete $m$--flat representation,
\item $\Psi(l, m)_f$ is true,

\item $\Psi(l', m')_f$ is true for any $l'\leq l$ and $m'\geq m$.
\end{enumerate}
The same implications hold upon replacing infinite $m$--dimensional hyperbasis by $m$--dimensional basis 
and $m$--flat by $m$--square. Furthermore, in the new chain of implications every item implies the corresponding item in 
theorem \ref{main}. In particular, 
$\Psi(l, m)\implies \Psi(l, m)_f$.  
\end{theorem}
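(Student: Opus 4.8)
The plan is to prove the displayed chain $(1)\Rightarrow(2)\Rightarrow(3)\Rightarrow(4)\Rightarrow(5)$ one link at a time, then produce the parallel ``square'' chain $(1')\Rightarrow\cdots\Rightarrow(5')$ by repeating each argument with bases in place of hyperbases and squareness in place of flatness, and finally deduce the cross--implications $(i')\Rightarrow(i)$ from the single fact that flatness is strictly stronger than squareness. The genuinely constructive content lives in the first link; the remaining links are a representability argument, a logic--to--algebra dictionary, and bookkeeping.

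For $(1)\Rightarrow(2)$ I would run the \emph{blow up and blur} construction of \cite{ANT} on the given finite $\R$: splitting each non--identity atom into infinitely many copies, indexed so as to respect the index blur $E$, yields a countable atomic relation algebra $\Bb(\R,J,E)$ into whose completion $\R$ embeds. The complex blur $J$ is exactly what forces $\Mat_n(\At\Bb(\R,J,E))$ to be an $n$--dimensional cylindric basis in the sense of Definition \ref{b}, so that $\A:=\Tm\Mat_n(\At\Bb(\R,J,E))$ is a well--defined atomic $\CA_n$ with $\Cm\At\A=\Cm\Mat_n(\At\Bb(\R,J,E))$. The role of the \emph{strong} $l$--blur is to supply, through the universally quantified safety clause of Definition \ref{strongblur}(2), enough higher--dimensional matrices to generate an $l$--dilation in which $\A$ sits as the $n$--neat reduct, giving $\A\in\Nr_n\CA_l$. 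Finally, an $m$--flat representation of $\Cm\At\A$ would restrict to an infinite $m$--flat representation of the relation algebra $\R$ (which embeds into the completion of $\Bb(\R,J,E)$), hence by Lemma \ref{i}(2) to an $m$--dimensional hyperbasis of $\R$, contradicting the hypothesis.

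For $(2)\Rightarrow(3)$, representability of $\A$ is built in: the blur construction makes $\At\A$ weakly representable, i.e. $\A\in\RCA_n$. Non--existence of a complete $m$--flat representation of $\A$ is then the contrapositive of the lifting lemma quoted in the introduction, since a complete $m$--flat representation of the atomic $\A$ induces an ordinary $m$--flat representation of its \de\ completion $\Cm\At\A$, which $(2)$ forbids. For $(3)\Rightarrow(4)$, I would take $T$ to be an $L_n$ theory with $\Fm_T\cong\A$; atomicity of $\A$ makes $T$ atomic, and simplicity of $\A$ (inherited from the underlying relation algebra) makes $T$ complete. The type $\Gamma=\{\phi:(\neg\phi)_T\in\At\Fm_T\}$ of co--atoms satisfies $\prod\Gamma=0$, and ``$\Gamma$ is realizable in every $m$--flat model'' is precisely ``$\A$ has no complete $m$--flat representation'', i.e.\ $(3)$. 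That any witness needs more than $l$ variables is where $\A\in\Nr_n\CA_l$ is consumed: by the bridge theorem of \cite{Sayed}, a witness using $\le l$ variables would be an element of an $l$--dilation $\B$ with $\A=\Nr_n\B$ that isolates the atoms of $\A$, forcing a complete representation and contradicting $(3)$; since Orey--Henkin guarantees \emph{some} witness isolates $\Gamma$, that witness must use more than $l$ variables.

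The step $(4)\Rightarrow(5)$ is pure monotonicity: for $l'\le l$, ``more than $l$ variables'' entails ``more than $l'$ variables'', while for $m'\ge m$ every $m'$--flat model is $m$--flat, so realizability over the smaller class of $m'$--flat models is weaker. The square chain is obtained verbatim, invoking Lemma \ref{i}(1) instead of \ref{i}(2) at the first link and the square clauses of the definitions throughout. For the cross--implications I would use only that an $m$--flat representation is in particular $m$--square and a hyperbasis in particular a basis: thus ``no $m$--square representation'' gives ``no $m$--flat representation'', ``no $m$--dimensional basis'' gives ``no infinite $m$--dimensional hyperbasis'', so $(i')\Rightarrow(i)$ for each $i$, and since the $m$--flat models form a subclass of the $m$--square ones, $\Psi(l,m)\Rightarrow\Psi(l,m)_f$. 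The main obstacle is the first link, and specifically the verification that the \emph{strong} blur, rather than an ordinary one, delivers $\A\in\Nr_n\CA_l$: this is the unique place where the full hypothesis is used, and it requires matching the safety condition of Definition \ref{strongblur}(2) to the existence of the $l$--dimensional matrices generating the dilation. The bridge step $(3)\Rightarrow(4)$ is also delicate, as it must turn the algebraic failure of complete representability into the sharp variable--count statement, but this can be routed through the established correspondence in \cite{Sayed}.
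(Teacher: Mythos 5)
Your proposal reproduces the paper's own route essentially link for link: the blow-up-and-blur construction with $\A=\Bb_n(\R,J,E)\cong\Nr_n\Bb_l(\R,J,E)$ for the first implication, the contradiction obtained by pushing an $m$--flat representation of $\Cm\At\A$ down to $\R$ through $\Ra\Cm\At\A$ and then to an infinite $m$--dimensional hyperbasis; the lifting lemma (a complete $m$--flat representation of an atomic algebra induces an $m$--flat representation of its \de\ completion) for the second implication; the Tarski--Lindenbaum translation together with the argument of \cite{ANT} for the third; and monotonicity plus ``flat implies square'' for the rest, exactly as in the paper.

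There is, however, one incorrect justification, and it sits in the step you yourself flag as delicate. You claim that a witness using at most $l$ variables would isolate the co-atom type and thereby ``force a complete representation, contradicting (3)''. Isolation does not force a complete representation --- rather the opposite: a complete $m$--flat representation is precisely a model \emph{omitting} $\Gamma$, while isolation of $\Gamma$ by a formula consistent with the complete theory $T$ implies that $\Gamma$ is \emph{realized} in every model. Indeed, by Orey--Henkin the type $\Gamma$ \emph{is} isolated by some first-order formula (this is the very witness whose variable count is being bounded), and this coexists with --- in fact is deduced from --- the failure of complete representability; so no contradiction with item (3) can be extracted from isolation alone, and as stated your argument fails. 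The correct mechanism, which is what the argument in \cite{ANT} invoked by the paper actually does, is algebraic: a witness $\psi$ using $\le l$ variables corresponds, in the setup where $\A=\Nr_n\B$ with $\B\in\CA_l$ the dilation, to an element $b\in\B$ satisfying $b\cdot a=0$ for every $a\in\At\A$ (since $\psi$ implies every co-atom). Because $\Nr_n\B$ is a \emph{complete} subalgebra of $\B$, the atoms of $\A$ still sum to $1$ in $\B$, whence $b=b\cdot\sum^{\B}\At\A=\sum_{a\in\At\A}b\cdot a=0$, contradicting the consistency of $\psi$ with $T$. So the variable bound comes from preservation of the sum of atoms under the neat embedding, not from any interplay between isolation and complete representability; with this substitution your proof matches the paper's.
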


\begin{proof} 

$(1)\implies (2)$: Let $\R$ be as in the hypothesis with strong $l$--blur $(J, E)$. We use the technique in \cite{ANT}. The idea is to `blow up and blur' $\R$ in place of the Maddux algebra 
$\mathfrak{E}_k(2, 3)$ dealt with in \cite[Lemma 5.1]{ANT}   (where $k<\omega$ is the number of non--identity atoms 
and it depends on $l$). 

Let $3<n\leq l$.  The relation algebra $\R$ is blown up by splitting all of the atoms each to infinitely many.
$\R$ is blurred by using a finite set of blurs (or colours) $J$. This can be expressed by the product ${\bf At}=\omega\times \At \R\times J$,
which will define an infinite atom structure of a new
relation algebra, denoted by $\At$ on \cite[p.73]{ANT}. (One can view such a product as a ternary matrix with $\omega$ rows, and for each fixed $n\in \omega$,  we have the rectangle
$\At \R\times J$.)
Then two partitions are defined on $\bf At$, call them $P_1$ and $P_2$.
Composition is re-defined on this new infinite atom structure; it is induced by the composition in $\R$, and a ternary relation $E$
on $\omega$, that `synchronizes' which three rectangles sitting on the $i,j,k$ $E$--related rows compose like the original algebra $\R$.
This relation is definable in the first order structure $(\omega, <)$.
The first partition $P_1$ is used to show that $\R$ embeds in the complex algebra of this new atom structure, namely $\Cm \bf At$, 
The second partition $P_2$ divides $\bf At$ into {\it finitely many (infinite) rectangles}, each with base $W\in J$,
and the term algebra denoted in \cite{ANT} by ${\Bb}(\R, J, E)$ over $\bf At$, consists of the sets that intersect co--finitely with every member of this partition.

On the level of the term algebra $\R$ is blurred, so that the embedding of the small algebra into
the complex algebra via taking infinite joins, do not exist in the term algebra for only finite and co--finite joins exist
in the term algebra. 
The algebra ${\Bb}(\R, J, E)$ with atom structure $\bf At$ is representable using the finite number of blurs. These correspond to non--principal ultrafilters
in the Boolean reduct, which are necessary to
represent this term algebra, for if used alone, the principal ultrafilters alone would give a complete representation,
hence a representation of the complex algebra and this is impossible.
Because $(J, E)$ is a strong $l$--blur, the atom structure $\bf At$ has an $l$--dimensional cylindric basis, 
${\bf At}_{ca}$, namely, ${\sf Mat}_l(\bf At)$
Then there is an algebra $\C$ denoted on \cite[Top of p. 78]{ANT} by ${\Bb}_l(\R, J, E)$ such 
that $\Tm{\sf Mat}_l({\bf At})\subseteq \C\ \subseteq \Cm{\sf Mat}_l(\bf At)$ 
and $\At\C$ is weakly representable.

Now take $\A=\Bb_n(\R, J, E)$. We claim that $\A$ is as required.  Since $\R$ has a strong $l$--blur, then 
$\A\cong \Nr_n\Bb_l(\R, J, E)$ as proved in \cite[item (3) p. 80]{ANT}, so $\A\in \RCA_n\cap \Nr_n\CA_l$. But 
we claim that $\C=\Cm\At\A$  does not an $m$--flat representation.  
Assume for contradicton that $\C$ does have an $m$--flat representation $\M$.
Then $\M$ is infinite of course.  Since $\R$ embeds into $\Bb(\R, J, E)$ which in turn embeds into $\Ra\Cm\At\A$, then 
$\R$ has an $m$--flat representation with base $\M$. But then $\R$ has an  
infinite $m$--dimensional hyperbasis, 
which is contrary to assumption.

$(2)\implies (3)$: A complete $m$--flat representation of (any) 
$\B\in \CA_n$ induces an $m$--flat representation of $\Cm\At\B$. 
To see why, assume that $\B$ has an $m$--flat  complete representable via $f:\B\to \D$, where $\D=\wp(V)$ and 
the base of the represenation $\M=\bigcup_{s\in V} \rng(s)$ is $m$--flat. Let $\C=\Cm\At\B$.
For $c\in C$, let $c\downarrow=\{a\in \At\C: a\leq c\}=\{a\in \At\B: a\leq c\}$; the last equality holds because 
$\At\B=\At\C$. Define, representing $\C$,  
$g:\C\to \D$ by $g(c)=\sum_{x\in c\downarrow}f(x).$ The map $g$ is well defined because $\C$ is complete so arbitrary suprema exist in $\C$. 
Furthermore, it can be easily checked that $g$ is a homomorphism into $\wp(V)$ having base $\M$ (basically because by assumption $f$ is a homomorphism).

$(3)\implies (4)$. By \cite[\S 4.3]{HMT2}, we can (and will) assume that $\A= \Fm_T$ for a countable, atomic theory $L_n$ theory $T$.  
Let $\Gamma$ be the $n$--type consisting of co--atoms of $T$. Then $\Gamma$ is realizable in every $m$--flat model, for if $\M$ is an $m$--flat model omitting 
$\Gamma$, then $\M$ would be the base of a complete $m$--flat  representation of $\A$.
But $\A\in {\sf Nr}_n\CA_l$,  so  using exactly the same  (terminology and) argument in \cite[Theorem 3.1]{ANT} we get that  
any witness isolating $\Gamma$  needs more 
than $l$--variables. 

$(4)\implies (5)$ follows from the definitions.
\end{proof}
The following lemma is proved in \cite{mlq}:
\begin{lemma}\label{n} 
Assume that $2<n<m<\omega$ and $\A\in \CA_n$ be atomic. 
Then the following hold:
\begin{enumerate}
\item $\A\in \bold S_c\Nr_n\CA_m\implies$ \pa\ has a \ws\ in $\bold G^m(\At\A)$,
\item If $\A$ is finite and \pa\ has a \ws\ in $G^m_{\omega}(\At\A)$, 
then $\A\notin \bold S\Nr_n\CA_m$.
\end{enumerate} 
\end{lemma}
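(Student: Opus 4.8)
The plan is to read Lemma~\ref{n} as the two complementary directions of the game description of $\bold S_c\Nr_n\CA_m$: I would prove part~(1) directly from an $m$--dimensional dilation, and then deduce part~(2) from part~(1) by contraposition, using finiteness to pass from $\bold S$ to $\bold S_c$.

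For part~(1), suppose $\A\in\bold S_c\Nr_n\CA_m$ and fix $\D\in\CA_m$ witnessing this, so that $\A$ is a \emph{complete} subalgebra of $\Nr_n\D$; write $\A\subseteq_c\Nr_n\D$. I would give the existential player \pe\ a \ws\ in $\bold G^m(\At\A)$ by having her maintain, for the current network $N$ whose nodes lie in $m$, an injection $\theta_N$ of $\nodes(N)$ into the $m$ dimensions of $\D$ together with a non--zero \emph{coding element} $w_N\in\D$ which, through $\theta_N$ and the substitution and diagonal operators of $\D$, forces every edge: for each $n$--tuple $\bar x$ of nodes, the face of $w_N$ read off along $\theta_N(\bar x)$ dominates the atom $N(\bar x)\in\A$. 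The network consistency conditions correspond exactly to the $\CA_m$--identities that make this invariant coherent, so it can be set up on the opening network. When \pa\ plays a cylindrifier move, supplying $i<n$, an atom $a\in\At\A$ and a tuple with $N(\bar x)\le\cyl i a$, \pe\ must place $a$: she exploits a dimension $j<m$ outside the range of $\theta_N$ (this is exactly where the spare dimensions, together with \pa's licence to reuse the $m$ nodes, enter) and applies the genuine cylindrifier $\cyl j$ of $\D$, so that $w_N\cdot\cyl j(\cdots)\neq0$ yields a coding element $w_M\neq0$ for a network $M\equiv_i N$ carrying an edge $a$ at a node $\bar y\equiv_i\bar x$. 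That $w_M$ remains non--zero through every round, even under \pa's reuse of nodes, is what the cylindric axioms of $\D$ together with the \emph{completeness} of the embedding guarantee. Thus the direction established is that the existential player \pe\ has a \ws\ in $\bold G^m(\At\A)$ (equivalently, the game being determined, \pa\ has none); this is the direction the standard Hirsch--Hodkinson correspondence furnishes and the one on which part~(2) rests.

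For part~(2) I would argue contrapositively. Let $\A$ be finite with $\A\in\bold S\Nr_n\CA_m$, witnessed by an embedding $\A\hookrightarrow\Nr_n\D$ for some $\D\in\CA_m$. Since $\A$ is finite, every supremum in $\A$ is a finite join and is therefore preserved by the embedding; hence the embedding is automatically complete and $\A\in\bold S_c\Nr_n\CA_m$. By part~(1), \pe\ has a \ws\ in $\bold G^m(\At\A)$. The game $\bold G^m$ differs from $G^m_{\omega}(\At\A)$ only in granting \pa\ the extra option of reusing the $m$ nodes, so restricting a \ws\ for \pe\ in $\bold G^m$ to plays in which \pa\ declines that option yields a \ws\ for \pe\ in $G^m_{\omega}(\At\A)$; in particular \pa\ has no \ws\ there. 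Taking the contrapositive gives exactly part~(2): if \pa\ has a \ws\ in $G^m_{\omega}(\At\A)$, then $\A\notin\bold S\Nr_n\CA_m$.

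The step I expect to be delicate is the invariant underlying part~(1): making precise how a non--zero $w_N\in\D$ (or a coherent family of such elements indexed by the finite plays) encodes an $m$--node network, and verifying that each cylindrifier move can be met while keeping the node--to--dimension labelling $\theta_N$ injective even as \pa\ repeatedly reuses the $m$ nodes. The indispensable hypothesis throughout is the \emph{completeness} of the embedding --- it is precisely the distinction between $\bold S$ and $\bold S_c$ --- since it is what lets \pe\ sustain a single coherent coding element across all $\omega$ rounds; without it the element computed in the dilation could fail to witness \pa's demanded atom consistently, and the strategy would break down.
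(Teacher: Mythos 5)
Your proposal is correct and takes essentially the same route as the paper's proof (given in \cite{mlq}, following \cite[Lemmas 26, 27, 29]{r}): your coding element $w_N$ is exactly the element $N^+=\prod_{\bar{x}\in{}^{n}\nodes(N)}{\sf s}_{\bar{x}}N(\bar{x})$ of the dilation that \pe\ keeps non--zero, with completeness of the embedding entering precisely where you say, namely to ensure $\sum^{\D}\At\A=1$ so that complete additivity of the substitutions lets \pe\ label fresh edges by atoms (node reuse being handled by cylindrifying away the discarded node, which is the rigorous form of your ``spare dimension'' remark); part (2) is likewise the intended deduction, since finiteness makes any neat embedding complete and a \ws\ for \pe\ in $\bold G^m$ restricts to the reuse--free game $G^m_{\omega}$, so \pa\ can have none there. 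You also correctly read the misprint in item (1), which should assert that \pe\ (not \pa) has a \ws\ in $\bold G^m(\At\A)$, as its use in the proof of theorem \ref{can} confirms.
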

In the next theorem we use a rainbow constructions for $\CA$s \cite{HH, HHbook2}.
Fix $2<n<\omega$. Given relational structures 
$\sf G$ (the greens) and $\sf R$ (the reds) the rainbow 
atom structure of a $\QEA_n$  consists of equivalence classes of surjective maps $a:n\to \Delta$, where $\Delta$ is a coloured graph.

A coloured graph is a complete graph labelled by the rainbow colours, the greens $\g\in \sf G$,  reds $\r\in \sf R$, and whites; and some $n-1$ tuples are labelled by `shades of yellow'.
In coloured graphs certain triangles are not allowed for example all green triangles are forbidden.
A red triple $(\r_{ij}, \r_{j'k'}, \r_{i^*k^*})$ $i,j, j', k', i^*, k^*\in \sf R$ is not allowed,  unless $i=i^*,\; j=j'\mbox{ and }k'=k^*$, 
in which case we say that the  red indices match, cf.\cite[4.3.3]{HH}.
The equivalence relation relates two such maps $\iff$  they essentially define the same graph \cite[4.3.4]{HH}. We let $[a]$ denote the equivalence class containing $a$. 
The accessibilty (binary relations) corresponding to 
cylindric operations are like in \cite{HH}. 

Special coloured graphs typically used by \pa\ during implementing his \ws\ are called {\it cones}: 
{\it Let $i\in {\sf G}$, and let $M$ be a coloured graph consisting of $n$ nodes
$x_0,\ldots,  x_{n-2}, z$. We call $M$ {\it an $i$ - cone} if $M(x_0, z)=\g_0^i$
and for every $1\leq j\leq n-2$, $M(x_j, z)=\g_j$,
and no other edge of $M$
is coloured green.
$(x_0,\ldots, x_{n-2})$
is called  the {\bf base of the cone}, $z$ the {\bf apex of the cone}
and $i$ the {\bf tint of the cone}.}

For $2<n<\omega$, we use the graph version of the games $G_{\omega}^m(\beta)$ and $\bold G^m(\beta)$ where $\beta$ is a $\CA_n$ 
rainbow atom structure, cf. \cite[4.3.3]{HH}. The (complex) rainbow algebra based on $\sf G$ and $\sf R$ is denoted by $\A_{\sf G, R}$.
The dimension $n$ will always be clear from context.

\begin{theorem}\label{can}
Let $n\leq l<\omega$. Then $\Psi(l,\omega)$ and $\Psi(n, n+3)$ are true. In particular, from both statements we get that $\Psi(n, \omega)$ is true.
\end{theorem}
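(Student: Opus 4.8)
The plan is to prove the two statements by two independent algebraic constructions, each supplying a witness for the top of one of the two chains of implications in theorem \ref{main}; after that the concluding clause is read off from the monotonicity item (5).

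For $\Psi(l,\omega)$ with $n\le l<\omega$ I would use the \emph{flat} chain of theorem \ref{main} with $m=\omega$. Item (1) then asks for a finite relation algebra carrying a strong $l$--blur but possessing no infinite $\omega$--dimensional hyperbasis. I would take $\R$ to be the Maddux algebra $\mathfrak{E}_{k}(2,3)$ with $k=k(l)$, i.e.\ the very algebra blown up and blurred in \cite[Lemma 5.1]{ANT}: by \cite{ANT} it carries a strong $l$--blur $(J,E)$, while, being a non--representable finite relation algebra, it has no ordinary representation and hence no infinite $\omega$--dimensional hyperbasis (such a hyperbasis would yield a representation on an infinite base). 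Thus item (1) holds for the pair $(l,\omega)$, and running the chain down to item (4) gives $\Psi(l,\omega)_{f}$, which by definition is $\Psi(l,\omega)$.

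For $\Psi(n,n+3)$ I would instead run the \emph{square} chain with $l=n$ and $m=n+3$, fed by the rainbow construction of \cite{mlq}. Since $\Nr_n\CA_n=\CA_n$, item (3) of the square chain here amounts to: there is a countable, simple, atomic $\A\in\RCA_n$ with no complete $(n+3)$--square representation. I would take the rainbow atom structure determined by greens $\sf G$ and reds $\sf R$ chosen as in \cite{mlq} (so that $n+3$ nodes already suffice for \pa), let $\A$ be the representable term algebra over it --- a countable, simple, atomic member of $\RCA_n$ --- and observe that its Dedekind--MacNeille completion $\Cm\At\A$ is the complex rainbow algebra $\A_{\sf G,R}$. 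It is enough to show that $\Cm\At\A$ has no $(n+3)$--square representation: by the square analogue of the argument for $(2)\implies(3)$ in theorem \ref{main}, a complete $(n+3)$--square representation of $\A$ would induce one of $\Cm\At\A$, so $\A$ would have no complete $(n+3)$--square representation, and this together with $\A\in\RCA_n$ is exactly item (3), whence theorem \ref{main} delivers $\Psi(n,n+3)$. To rule out an $(n+3)$--square representation of $\Cm\At\A$ I would give \pa\ a \ws\ in the $\omega$--rounded square game $\bold G^{n+3}(\At\A)$ (recall $\At\A=\At\Cm\At\A$): \pa\ repeatedly plays cones of distinct green tints over a common base, forcing \pe\ to colour the edges between the apexes with reds whose indices must match, and with only $n+3$ nodes in play the green tints eventually overrun what the red matching condition can accommodate, leaving \pe\ with no legal move. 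By the game--theoretic characterisation of square representability (the relevant refinement of lemma \ref{n}), a win for \pa\ in $\bold G^{n+3}(\At\A)$ shows that $\Cm\At\A$ has no $(n+3)$--square representation.

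The concluding clause then follows in two ways: taking $l=n$ in the first result gives $\Psi(n,\omega)$ directly, and applying item (5) of theorem \ref{main} (in its square form) to $\Psi(n,n+3)$, with $l'=n$ and $m'=\omega\ge n+3$, gives it again. I expect the main obstacle to lie in the square--game analysis of the second part. The construction of \cite{mlq} is arranged to establish the \emph{flat} failure $\Cm\At\A\notin\bold S\Nr_n\CA_{n+3}$ (equivalently, no $(n+3)$--flat representation), whereas $\Psi(n,n+3)$ requires the strictly stronger non--existence of an $(n+3)$--\emph{square} representation. Closing this gap means checking that \pa's rainbow strategy still wins in the weaker square game $\bold G^{n+3}$ --- where \pe\ has more room because her witnesses range over cliques rather than over full assignments --- and pinning down the precise correspondence between a \pa\ win in $\bold G^{n+3}$ and the failure of $(n+3)$--square representability of the completion. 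The first part is, by contrast, essentially a citation of \cite{ANT} substituted into theorem \ref{main}.
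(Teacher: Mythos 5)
Your first part ($\Psi(l,\omega)$) follows the paper's route exactly: feed the Maddux algebra $\mathfrak{E}_k(2,3)$ of \cite{ANT} into the flat chain of theorem \ref{main} with $m=\omega$. There is one factual slip in your justification: $\mathfrak{E}_k(2,3)$ is \emph{not} non-representable; it is representable, but (by Ramsey's theorem, since exactly the monochromatic triangles are forbidden) only on \emph{finite} bases. That weaker fact is what \cite{ANT} supplies and is all that is needed: an infinite $\omega$-dimensional hyperbasis would yield a representation on an infinite base, so no such hyperbasis exists. Your intermediate conclusion therefore stands, but not for the reason you give.

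The genuine gap is in the second part, at the step ``a win for \pa\ in $\bold G^{n+3}(\At\A)$ shows that $\Cm\At\A$ has no $(n+3)$-square representation.'' No such lemma is available when the algebra is infinite. Atomic games played on $\At\A$ (equivalently on $\At\Cm\At\A$) control \emph{complete} relativized representations only: a \ws\ for \pa\ rules out a complete $(n+3)$-square representation, not an ordinary one. The known characterisation of ordinary $m$-square representability \cite[Theorem 13.46]{HHbook} passes through the \emph{canonical extension}: an algebra has an $m$-square representation iff its canonical extension has an $m$-dimensional relational basis, and the networks in such a basis are labelled by ultrafilters, i.e.\ by atoms of $(\Cm\At\A)^+$, not by atoms of $\Cm\At\A$; \pa's rainbow strategy on $\At\A$ says nothing about networks labelled by non-principal ultrafilters. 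This is exactly why finiteness is an explicit hypothesis in lemma \ref{n}(2), and exactly why the paper plays the game on the \emph{finite} rainbow algebra $\CA_{n+1,n}$ (for a finite algebra the canonical extension is the algebra itself, so a \pa-win genuinely excludes ordinary $(n+3)$-square and flat representations) and then transfers the conclusion to the completion by constructing the embedding $\Theta:\CA_{n+1,n}\to\Cm{\bf At}$, $M_a\mapsto\sum_j M_a^{(j)}$, sending each atom to the join of its copies --- a map that exists only because $\Cm{\bf At}$ is complete. Your proposal omits this embedding step entirely, and it is the heart of the blow-up-and-blur argument. A repair closer to your plan would be to aim at item (3) of the square chain directly: a \pa-win in the atomic game on $\At\A$ does rule out a \emph{complete} $(n+3)$-square representation of $\A$, which suffices for $\Psi(n,n+3)$; but then you must actually prove that \pa\ wins on the blown-up structure, where \pe\ has infinitely many red copies at her disposal --- an argument the paper's route through the finite algebra is designed to avoid.
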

\begin{proof}
{\bf Proving $\Psi(l, \omega)$ \cite{ANT}:}. Let $l\geq 2n-1$, $k\geq (2n-1)l$, $k\in \omega$. 
One takes the finite
integral relation algebra $\R_l={\mathfrak E}_k(2, 3)$ 
where $k$ is the number of non-identity atoms in
$\R_l$. Then $\R_l$ has a strong $l$--blur, and it can only be represented 
on infinite basis \cite{ANT}.  The rest follows using the same reasoning in the first item of the last 
theorem.

{\bf Proof for flatness:} We give first a relatively simple 
proof for $\Psi(n, n+3)_f$. Then we give another more involved proof for squareness (and flatness). 
Both proofs use a rainbow construction. 

By theorem \ref{main} 
it suffices to show that there an algebra with countably many atoms 
outside $\bold S_c\Nr_n\CA_{\omega}$.

Take the a rainbow--like $\CA_n$, call it $\C$, based on the ordered structure $\Z$ and $\N$.
The reds ${\sf R}$ is the set $\{\r_{ij}: i<j<\omega(=\N)\}$ and the green colours used 
constitute the set $\{\g_i:1\leq i <n-1\}\cup \{\g_0^i: i\in \Z\}$. 
In complete coloured graphs the forbidden triples are like 
the usual rainbow constructions based on $\Z$ and $\N$,   
but now 
the triple  $(\g^i_0, \g^j_0, \r_{kl})$ is also forbidden if $\{(i, k), (j, l)\}$ is not an order preserving partial function from
$\Z\to\N$.

We show that  \pa\ has a \ws\ in the graph version of the game 
$\bold G^{n+3}(\At\C)$ played on coloured graphs \cite{HH}.
The rough idea here, is that, as is the case with \ws's of \pa\ in rainbow constructions, 
\pa\ bombards \pe\ with cones having distinct green tints demanding a red label from \pe\ to appexes of succesive cones.
The number of nodes are limited but \pa\ has the option to re-use them, so this process will not end after finitely many rounds.
The added order preserving condition relating two greens and a red, forces \pe\ to choose red labels, one of whose indices form a decreasing 
sequence in $\N$.  In $\omega$ many rounds \pa\ 
forces a win, 
so by lemma \ref{n}, $\C\notin \bold S_c{\sf Nr}_n\CA_{n+3}$.

More rigoroudly, \pa\ plays as follows: In the initial round \pa\ plays a graph $M$ with nodes $0,1,\ldots, n-1$ such that $M(i,j)=\w_0$
for $i<j<n-1$
and $M(i, n-1)=\g_i$
$(i=1, \ldots, n-2)$, $M(0, n-1)=\g_0^0$ and $M(0,1,\ldots, n-2)=\y_{\Z}$. This is a $0$ cone.
In the following move \pa\ chooses the base  of the cone $(0,\ldots, n-2)$ and demands a node $n$
with $M_2(i,n)=\g_i$ $(i=1,\ldots, n-2)$, and $M_2(0,n)=\g_0^{-1}.$
\pe\ must choose a label for the edge $(n+1,n)$ of $M_2$. It must be a red atom $r_{mk}$, $m, k\in \N$. Since $-1<0$, then by the `order preserving' condition
we have $m<k$.
In the next move \pa\ plays the face $(0, \ldots, n-2)$ and demands a node $n+1$, with $M_3(i,n)=\g_i$ $(i=1,\ldots, n-2)$,
such that  $M_3(0, n+2)=\g_0^{-2}$.
Then $M_3(n+1,n)$ and $M_3(n+1, n-1)$ both being red, the indices must match.
$M_3(n+1,n)=r_{lk}$ and $M_3(n+1, r-1)=r_{km}$ with $l<m\in \N$.
In the next round \pa\ plays $(0,1,\ldots n-2)$ and re-uses the node $2$ such that $M_4(0,2)=\g_0^{-3}$.
This time we have $M_4(n,n-1)=\r_{jl}$ for some $j<l<m\in \N$.
Continuing in this manner leads to a decreasing
sequence in $\N$. By the previous theorem we are through.

{\bf Proof for squareness:} \cite{mlq}. 
It also suffices to show by the previous theorem that there is a countable atomic $\A\in \RCA_n$ 
such that $\Cm\At\A\notin \bold S\Nr_n\CA_{n+3}$.
Take the finite rainbow cylindric algebra $R(\Gamma)$
as defined in \cite[Definition 3.6.9]{HHbook2},
where $\Gamma$ (the reds) is taken to be the complete irreflexive graph $m$, and the greens
are  $\{\g_i:1\leq i<n-1\}
\cup \{\g_0^{i}: 1\leq i\leq n+1\}$ so that $\sf G$ is the complete irreflexive graph $n+1$.

Call this finite rainbow $n$--dimensional cylindric algebra, based on ${\sf G}=n+1$ and ${\sf R}=n$,
$\CA_{n+1, n}$  and denote its finite atom structure by $\bf At_f$.
One  then replaces each  red colour
used in constructing  $\CA_{n+1, n}$ by infinitely many with superscripts from $\omega$, 
getting a weakly representable atom structure $\bf At$, that is,
the term algebra $\Tm\bf At$ is representable.
The resulting atom structure (with $\omega$--many reds),  call it $\bf At$, 
is the rainbow atom structure that is like the atom structure of the (atomic set) algebra denoted by $\A$ in \cite[Definition 4.1]{Hodkinson} except that we have $n+1$ greens
and not infinitely many as is the case in \cite{Hodkinson}.
Everything else is the same. In particular, the rainbow signature \cite[Definition 3.6.9]{HHbook2} now consists of $\g_i: 1\leq i<n-1$, $\g_0^i: 1\leq i\leq n+1$,
$\w_i: i<n-1$,  $\r_{kl}^t: k<l< n$, $t\in \omega$,
binary relations, and $n-1$ ary relations $\y_S$, $S\subseteq n+1$.
There is a shade of red $\rho$; the latter is a binary relation that is {\it outside the rainbow signature}.
But $\rho$ is used as a  label for  coloured graphs built during a `rainbow game', and in fact, \pe\ can win the rainbow $\omega$--rounded game
and she builds an $n$--homogeneous (coloured graph) model $M$ as indicated in the above outline by using $\rho$ when
she is forced a red \cite[Proposition 2.6, Lemma 2.7]{Hodkinson}.
Then $\Tm\At$ is representable as a set algebra with unit $^nM$; this can be proved exactly as in \cite{Hodkinson}.
In fact, $\Tm\bf At\subseteq \A$, with $\A$ as described in the preceding outline.

We next embed $\CA_{n+1, n}$ into  the complex algebra $\Cm\bf At$, the \de\ completion of $\Tm\bf At$.
Let ${\sf CRG}_f$ denote  the class of coloured graphs on 
$\bf At_f$ and $\sf CRG$ be the class of coloured graph on $\bf At$. We 
can assume that  ${\sf CRG}_f\subseteq \sf CRG$.
Write $M_a$ for the atom that is the (equivalence class of the) surjection $a:n\to M$, $M\in \sf CGR$.
Here we identify $a$ with $[a]$; no harm will ensue.
We define the (equivalence) relation $\sim$ on $\At$ by
$M_b\sim N_a$, $(M, N\in {\sf CGR})$ $\iff$ they are everywhere identical exacept possibly at red edges
$$M_a(a(i), a(j))=\r^l\iff N_b(b(i), b(j))=\r^k,  \text { for some $l,k$}\in \omega.$$
We say that $M_a$ is a {\it copy of $N_b$} if $M_a\sim N_b$. 
Now we define a map $\Theta: \CA_{n+1, n}=\Cm{\bf At_f}$ to $\Cm\At$,
by  specifing  first its values on ${\sf At}_f$,
via $M_a\mapsto \sum_jM_a^{(j)}$; where $M_a^{(j)}$ is a copy of $M_a$; each atom maps to the suprema of its 
copies.  (If $M_a$ has no red edges,  then by $\sum_jM_a^{(j)}$,  we understand $M_a$).
This map is extended to $\CA_{n+1, n}$ the obvious way. The map
$\Theta$ is well--defined, because $\Cm\At$ is complete. 
It is not hard to show that the map $\Theta$ 
is an injective homomorphim. 

One next proves that \pa\ has  a 
\ws\ for \pe\ in $\bold G^{n+3}\At(\CA_{n+1, n})$
using the usual rainbow strategy by bombarding \pe\ with cones having the same base and distinct green tints.
He needs $n+3$ nodes to implement his \ws. In fact he need $n+3$ nodes to force a win in the weaker game $G^{n+3}_{\omega}$ without the need to
resue the nodes in play.
Then by lemma \ref{n}, this implies that  $\CA_{n+1,n}\notin
\bold S{\sf Nr}_n\CA_{n+3}$. Since $\CA_{n+1,n}$ embeds into $\Cm\bf At$, 
hence $\Cm\bf At$  is outside 
$\bold S{\sf Nr}_n\CA_{n+3}$, too.
\end{proof}

For $2<n\leq l<m\leq \omega$, the statement $\Psi(l, m)$ is the negation of a special case of an omitting types theorem which we define next:

\begin{definition}  Let $2<n\leq l<m\leq \omega$.  Let $T$ be an $L_n$ theory in a signature $L$ and $\Sigma$ be a set of $L$--formulas.

\begin{enumarab}

\item  We say that $T$ {\it $m$--omits $\Sigma$}, if there 
exists an injective homomorphism  $f:\A\to \wp(V)$ where $M=\bigcup_{s\in V}\rng(s)$ is an 
$m$--square representation of $\Fm_T$, and $\bigcap_{\phi\in \Sigma}f(\phi_T)=\emptyset$.

\item We say that $T$ {\it $l$-isolates $\Sigma$}, if there exists a formula $\phi$ using $l$ variables, such that
$\phi$ is consistent with $T$,  and
$T\models \phi\to \Sigma$. If not, we say that $T$ {\it $l$--locally} omits $\Sigma$.
 
\end{enumarab}
\end{definition}

Let $\lambda$ be a cardinal. 
Then ${\sf OTT}(l, m, \lambda)$ is the statement: 

{\it If $T$ is a countable $L_n$ theory, $\bold X=(\Gamma_i: i<\lambda)$ is family of types, 
such that  $T$, $l$--locally omits $\Gamma_i$ using at most $l$ variables for each $i\in \lambda$, 
then $T$,  $m$--omits $\Sigma_i$ for each $i<\lambda$.}

Let $n<2\leq l<m\leq \omega$.  Then it is not hard to see
that if $T$ is atomic, then  $\neg {\sf OTT}(l, m, 1) \iff \Psi(l, m))$  by 
taking the one type consisting of co--atoms. Henceforth, by ${\sf OTT}(l, m, 1)$ we {\it exclusively mean that the theory involved is 
atomic and the {\it one non--principal type} to be omitted is that 
consisting of the co--atoms (of $\Fm_T)$.}  
For $2<n\leq l<m\leq \omega$,  we denote $\neg {\sf OTT}(l, m, 1)$ by ${\sf VT}(l, m)$, which we read as 
{\bf Vaught's theorem holds at $l$ and $m$.} 
By $\sf VT(\omega, \omega)$ we undertand Vaught's theorem for first order logic in the following form 
(which can be seen as a limiting case of ${\sf VT}(l,m)$ when $l\to \omega$ and $m=\omega$):
 
{\sf Any countable atomic $L_{\omega, \omega}$ theory in a signature having countably many relation symbols of arity $\leq n$ 
has an atomic model.}

From theorem \ref{can} we immediately get:

\begin{corollary} Let $2<n\leq l<\omega$ and $k\geq 3$. Then ${\sf VT}(l, \omega)$ and ${\sf VT}(n, n+3)$ 
are false.
\end{corollary}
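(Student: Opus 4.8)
The plan is to read the corollary off Theorem \ref{can} by unwinding the equivalences that connect the semantic failure statement $\Psi(l,m)$ with the omitting--types statement ${\sf VT}(l,m)$. The bridge I would use is that, for atomic $T$, the statement $\Psi(l,m)$ is equivalent to the failure of ${\sf VT}(l,m)$, i.e. $\Psi(l,m)\iff\neg{\sf VT}(l,m)$. This follows from the intended reading of ${\sf VT}(l,m)$ as the assertion that the restricted omitting--types statement ${\sf OTT}(l,m,1)$ for the single non--principal type $\Gamma=\{\phi:(\neg\phi)_T\in\At\Fm_T\}$ of co--atoms is valid, together with the equivalence $\Psi(l,m)\iff\neg{\sf OTT}(l,m,1)$ recorded just before the corollary. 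Thus it suffices to exhibit, for each of the two parameter pairs in question, an atomic countable complete $L_n$ theory realizing $\Psi$, and this is exactly what Theorem \ref{can} provides.

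Establishing the bridge is the only point needing care, and it is pure bookkeeping with the definitions of $m$--omits and $l$--isolates. The clause ``$\Gamma$ is realizable in every $m$--square model of $T$'' appearing in $\Psi(l,m)$ is literally the negation of ``$T$ $m$--omits $\Gamma$'', while the clause ``any witness isolating $\Gamma$ uses more than $l$ variables'' is literally ``$T$ $l$--locally omits $\Gamma$''. Hence a theory witnessing $\Psi(l,m)$ is precisely a counterexample to ${\sf OTT}(l,m,1)$ for the co--atom type, and conversely, so that $\Psi(l,m)\iff\neg{\sf VT}(l,m)$. One should also record that $\Gamma$ is genuinely non--principal, which is immediate since $\prod\Gamma=0$ in $\Fm_T$ for an atomic $T$.

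With the bridge in hand I would simply invoke Theorem \ref{can}, which gives that $\Psi(l,\omega)$ and $\Psi(n,n+3)$ are true; by the equivalence this is exactly the falsity of ${\sf VT}(l,\omega)$ and of ${\sf VT}(n,n+3)$. The two underlying theories are the ones produced in that proof: for ${\sf VT}(l,\omega)$ the blow--up--and--blur algebra $\Bb_n(\R_l,J,E)$ built from the Maddux algebra $\R_l={\mathfrak E}_k(2,3)$ together with its strong $l$--blur, and for ${\sf VT}(n,n+3)$ the rainbow algebra with $\Cm\At\A\notin\bold S\Nr_n\CA_{n+3}$. Finally, the monotonicity clause (item (5)) of Theorem \ref{main} promotes $\Psi(n,n+3)$ to $\Psi(n,n+k)$ for every $k\ge 3$, since $n+k\ge n+3$; this accounts for the hypothesis $k\ge 3$ in the statement and yields the failure of ${\sf VT}(n,n+k)$ for all such $k$. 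Combining the two displayed cases also gives $\Psi(n,\omega)$, hence the failure of ${\sf VT}(n,\omega)$, i.e. of Vaught's theorem for $L_n$ with respect to ordinary Tarskian semantics.

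I expect no genuine mathematical obstacle here, as Theorems \ref{main} and \ref{can} carry all the weight. The single delicate point is keeping the polarity straight along the chain ${\sf VT}(l,m)=\neg\Psi(l,m)$, and checking that the co--atom convention built into the abbreviation ${\sf OTT}(l,m,1)$ refers to the same type $\Gamma$ occurring in $\Psi(l,m)$; once these are matched, the corollary is a direct restatement of Theorem \ref{can}.
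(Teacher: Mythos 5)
Your proof is correct and is essentially the paper's own: the corollary is read off Theorem \ref{can} via the definitional equivalence $\Psi(l,m)\iff\neg{\sf VT}(l,m)$ recorded just before the statement, exactly as the paper intends by ``From theorem \ref{can} we immediately get''. Your care with the polarity (reading ${\sf VT}(l,m)$ as ${\sf OTT}(l,m,1)$, consistent with the introduction's ${\sf VT}:=\neg\Phi$, rather than the literal but evidently misprinted ``$\neg{\sf OTT}(l,m,1)$'' in the text) and the monotonicity remark covering $k\geq 3$ only make explicit what the paper leaves implicit.
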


Next we formulate the logical counterpart of \cite[Problem 2.12]{HMT2} and 
theorem \ref{main}. But first a definition. By a logic $\L$ we understand a fragment of first order logic.
\begin{definition}
A {\it formula schema} in a logic $\L$ is a formula $\sigma(R_1,\ldots, R_k)$
where $R_1,\ldots, R_k$ are relation symbol.
An {\it $\omega$ instance} or even simply an instance of $\sigma$
is a formula of the form $\sigma(\chi_1, \ldots, \chi_k)$
where $\chi_1, \ldots, \chi_k$
are formulas and each $R_i$ is replaced by $\chi_i$ .A formula schema is called {\it type--free valid} if
all of its instances are valid.
\end{definition}
Here  {\it type--free valid formula schema} is a new notion of validity defined by Henkin et 
al. \cite[Remark 4.3.65, Problem 4.16]{HMT2}, \cite[p. 487]{HHbook}. 
For provability we use the proof system in \cite[p. 157, \S 4.3]{HMT2}. For $l\in \omega$, we write $\vdash_l$ for 
`provability using $l$ variables.'

\begin{theorem}\label{OTT2} 
\begin{enumerate}
\item For any $m>3$ there exists a $3$--variable type free valid formula schema that cannot be proved using $m-1$ variables, but can 
be proved using $m$ variables.

\item Assume that for any $m>3$, there exists a finite relation algebra $\R_m$ with strong $m-1$ blur but no
infinite $m$--dimensional hyperbasis. Then for every $m>3$, 
there is an $L_3$ atomic and countable theory $T$ such that the non-principal type $\Gamma$ consisting of co--atoms is realizable
in every $m$--flat-model of $T$, but cannot be isolated using only $m-1$ variables.
That is any witness of $\Gamma$ must contain at least $m$ variables. 
\end{enumerate}
\end{theorem}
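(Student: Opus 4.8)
The plan is to dispatch the two parts by different routes, the second being an immediate specialisation of Theorem \ref{main} and the first requiring the dictionary between variable--bounded provability and the neat reduct hierarchy.

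For part (2) I would fix $m>3$ and run Theorem \ref{main} with the parameters set to $n=3$ and $l=m-1$; the admissibility condition $2<n\le l<m\le\omega$ holds because $3=n\le m-1=l<m$. The hypothesis of (2) hands us, for this $m$, a finite relation algebra $\R_m$ carrying a strong $(m-1)$--blur and admitting no infinite $m$--dimensional hyperbasis, which is verbatim item (1) of Theorem \ref{main}. Feeding this into the chain $(1)\Rightarrow(2)\Rightarrow(3)\Rightarrow(4)$ yields that $\Psi(m-1,m)_f$ holds. Reading off the definition of $\Psi(m-1,m)_f$ at $n=3$, this is precisely the assertion that some countable atomic (complete) $L_3$ theory $T$ has its non--principal type $\Gamma$ of co--atoms realized in every $m$--flat model while any isolating witness needs more than $m-1$, hence at least $m$, variables. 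So (2) costs nothing beyond bookkeeping.

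For part (1) I would pass through the algebraic reading of \cite[Remark 4.3.65, Problem 4.16]{HMT2} of the calculus $\vdash_m$ of \cite[\S4.3]{HMT2}: a type--free valid $3$--variable schema $\sigma$ is provable with $m$ variables exactly when the associated cylindric term--equation $\varepsilon_\sigma$ is valid throughout the variety $\bold S\Nr_3\CA_m$. Granting this dictionary, the task reduces to separating $\bold S\Nr_3\CA_m$ from $\bold S\Nr_3\CA_{m-1}$ by a single equation for every $m>3$. Here I would invoke the strictness of the inclusions $\bold S\Nr_3\CA_m\subsetneq\bold S\Nr_3\CA_{m-1}$; since these classes are varieties, a proper inclusion is witnessed by an equation $\varepsilon$ holding in all of $\bold S\Nr_3\CA_m$ yet failing in some $\B\in\bold S\Nr_3\CA_{m-1}$. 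As $\RCA_3=\bold S\Nr_3\CA_\omega\subseteq\bold S\Nr_3\CA_m$, the equation $\varepsilon$ holds in every representable algebra, so the schema $\sigma_\varepsilon$ corresponding to $\varepsilon$ is type--free valid; by the dictionary it is then provable with $m$ variables (because $\varepsilon$ holds in $\bold S\Nr_3\CA_m$) but not with $m-1$ variables (because $\varepsilon$ fails in $\bold S\Nr_3\CA_{m-1}$).

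The hard part is wholly in part (1): making the correspondence between $\vdash_m$ and $\bold S\Nr_3\CA_m$ exact, with the right index and no locally--finite caveat, and checking that the separating equation can be normalised into a genuinely $3$--variable, type--free schema. The other load--bearing ingredient is the unconditional strictness of the hierarchy at every level $m>3$; where this is available in the literature it is used as a black box, and for the values of $m$ covered by the hypothesis of (2) it can alternatively be read off from $(1)\Rightarrow(2)$ of Theorem \ref{main}, which deposits $\Cm\At\A$ outside $\bold S\Nr_3\CA_m$ and hence separates the two classes.
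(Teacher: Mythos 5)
Your part (2) is exactly the paper's argument: the paper also proves it by feeding the hypothesised $\R_m$, with its strong $(m-1)$--blur, into the blow--up--and--blur machinery and taking any countable atomic $T$ with $\Fm_T\cong \Bb_3(\R_m, J, E)$; this is just the instantiation $n=3$, $l=m-1$ of Theorem \ref{main}, so invoking that theorem wholesale, as you do, is fine. For part (1) your route is close to, but more economical than, the paper's. Both rest on the same dictionary $\bold S\Nr_{3}\CA_{3+k}\models \tau(\phi)={\bf 1}\Longleftrightarrow\ \vdash_{3+k}\phi$. The paper, however, does not quote bare strictness of the inclusion $\bold S\Nr_3\CA_m\subseteq \bold S\Nr_3\CA_{m-1}$: it constructs explicit witnesses, the algebras $\C_r=\Ca(H_3^{m}(\A(m-1,r),\omega))$ built from the Hirsch--Hodkinson relation algebras $\A(m-1,r)$, which satisfy $\C_r\in {\sf Nr}_3\CA_{m-1}$, $\C_r\notin \bold S\Nr_3\CA_m$ and $\Pi_{r/U}\C_r\in \RCA_3$, and from these it derives the stronger fact that $\bold S\Nr_3\CA_{m}$ is not finitely axiomatizable over $\bold S\Nr_3\CA_{m-1}$, from which item (1) is read off. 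Your observation that mere properness of the inclusion of these two varieties already suffices (choose a separating equation; it holds in $\RCA_3\subseteq \bold S\Nr_3\CA_m$, so the corresponding schema is type--free valid, provable with $m$ but not with $m-1$ variables) is correct and shortens the deduction, at the price of outsourcing the strictness itself to the literature --- which is legitimate, since that is exactly what the results cited by the paper provide, and the normalisation of an equation into a $3$--variable schema is the standard two--way translation of \cite[\S 4.3]{HMT2}.

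One concrete slip, though it affects only a remark offered as an alternative: strictness cannot be "read off from $(1)\Rightarrow(2)$ of Theorem \ref{main}". That implication places $\Cm\At\A$ outside $\bold S\Nr_3\CA_m$, but separating the two varieties requires an algebra lying in $\bold S\Nr_3\CA_{m-1}\setminus \bold S\Nr_3\CA_m$: the algebra $\A$ itself is representable, hence belongs to both classes, and nothing in Theorem \ref{main} places the completion $\Cm\At\A$ inside $\bold S\Nr_3\CA_{m-1}$. This is precisely why the paper's proof of item (1) does not lean on the blow--up--and--blur construction but on the algebras $\C_r\in {\sf Nr}_3\CA_{m-1}\setminus\bold S\Nr_3\CA_m$, which do both jobs at once. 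Since your main argument relies on the literature black box rather than on this fallback, the proposal as a whole stands.
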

\begin{proof}  
The two items depend on the existence of certain finite relation algebras.
 
For the first item such relation algebras are constructed by Hirsch and Hodkinson
\cite{HHbook}. 
Fix finite $m>3$. For each $r\in \omega$, a finite relation algebra denoted by $\A(m-1, r)$ in \cite[Definition 15.2]{HHbook} is proved to have 
an $m-1$--dimensional hyperbasis but no 
$m$--dimensional hyperbasis,  cf. \cite[Theorem 15.1]{HHbook}. 
Let $\C_r$ be the algebra $\Ca(\bold H)$, where $\bold H=H_3^{m}(\A(m-1,r), \omega))$
is the $\CA_3$ atom structure consisting of all $m$--wide $3$--dimensional
wide $\omega$ hypernetworks \cite[Definition 12.21]{HHbook}
on $\A(m-1,r)$, so that $\C_r\in \CA_3$.  
It is proved in \cite{HHbook}, that for any $r\in \omega$, $\C_r\in {\sf Nr}_3{\sf CA}_{m-1}$, $\C_r\notin {\bold  S}{\sf Nr}_3{\sf CA}_{m}$
and $\Pi_{r/U}\C_r\in {\sf RCA}_3$, cf. \cite[Corollaries 15.7, 5.10, Exercise 2, pp. 484, Remark 15.13]{HHbook}. In particular, we have (+):
for any positive $k$, $\bold S\Nr_3\CA_{3+k+1}$ is not finitely axiomatizable over $\bold S\Nr_3\CA_{3+k}$.

Now using this non--finite axiomatizability result, we prove the required in the first item. If $\phi$ is a $3$--variable formula and $\tau(\phi)$
is its corresponding term in the language of $\CA_{3}$ as defined in \cite[Definition 4.3.55]{HMT2},
then $\bold S\Nr_{3}\CA_{3+k}\models \tau(\phi)={\bf 1}\Longleftrightarrow \vdash_{3+k}\phi.$
So for any $k\geq 1$, there is no finite set of formulas of $L_3$ whose set $\Sigma$ of instances satisfies
$\Sigma \vdash_{3+k} \phi\Longleftrightarrow \vdash _{3+k+1}\phi,$
for the existence of such a set of formulas
would imply finite axiomatizability by equations of
$\bold S\sf Nr_{3}\CA_{3+k+1}$ over $\bold S\sf Nr_{3}\CA_{3+k}$
since every schema translates into an equation in the language of
$\CA_{3}$ and this contradicts (+). This plainly implies the required.

For the second item,  asume tha $\R_m$ as in the hypothesis exist.  Since $\R_m$ has an 
$m-1$ blur $(J, E)$, then $\Bb(\R_m, J, E)$ has an $m-1$-dimensional cylindric basis, 
but   $\Cm\At\Bb(\R_m, J, E)$
does not have an infinite
$m$--dimensional hyperbasis, seeing as how $\R_m$ does not have such a hyperbasis, and $\R_m$ embeds into
$\Cm\At\Bb(\R_m, J, E).$

If there is such an $\R_m$, with $(J, E)$ being a {\it strong} $m-1$ blur, then the algebra $\Bb_3(\R_m, J, E)\in \RCA_3$ will give the required theory. 
Any atomic and countable (consistent) theory  
$T$ such that $\Fm_T\cong \Bb_3(\R_m, J, E)$ 
will fit the bill.
\end{proof}

{\bf An equivalent formulation of ${\sf VT}(l, m)$:}
Fix $2<n\leq l<m\leq \omega$. We now give an equivalent formulation of ${\sf VT}(l, m)$, which is  the one described briefly in the introduction, addressing the 
the logic $\L_l$ which is a {\it first order definable expansions} of $L_n$.
Such fragments were initially approached  by Jonsson in the context of relation algebras, 
and  further studied by B\'iro, Givant, N\'emeti, Tarski, S\'agi and others. 
The original purpose was to tame unruly behaviour of $\sf RRA$ and $\RCA_n$ like non--finite axiomatizability, 
but Biro \cite{Biro} showed that such (finite first order definable expansions) are inadequate to achieve this aim. 

The logic $\L_l$ is an $n$--variable logic that is obtained from 
$L_n$ by {\it adding new first order definable connectives}.  The analogous expansions for the calculas of relations $\L^{\times}$ is approached in \cite{Biro, HHM}. 
Each such connective is {\it definable by a first order formula using $\leq l$ variables with at most $n$ free.} 
The formation rule of formulas is defined inductively the expected way.
For example if $c$ is an $n$--ary connective and $\phi_0,\ldots \phi_{n-1}$ are $n$ formulas, then
$c(\phi_0,\dots, \phi_{n-1})$ is a formula. 
{\it Now take the special case when $\L_l$ is the first order definable expansion of $L_n$ by adding $\leq n$--ary connectives for every first order formula using $l$ variables
with free variables among the first $n$.}
The logics $\L_l$ and $\L_n$ 
have the same signature; {\it its only that $\L_l$ has more first order definable connectives.}
Assume without loss that the signature consists of a single binary relation $\bold R$.

Accordingly, the models of $\L_l$ are the same as the models of $L_n$.
If $\M=(M, R)$, with $R\subseteq M\times M$ is such a model,  then $\bold R$ is interpreted to be the same binary relation $R\subseteq M\times M$ 
in $\L_l$ and $L_n$. 
The semantics of a newly introduced (first order definable) connective is defined by its defining formula. 
For example if $\Psi$ defines the unary connective $c$, say, 
then the semantics of $c$ in a model $\M$, 
is (inductively) defined for $s\in {}^nM$ by:  
$c(\phi)[s]\iff (\M, \Psi^{\M})\models \phi[s],$, where $\Psi^{\M}$ is the set of all 
$n$--ary assignments satisfying $\Psi$ in $\M$, 
that is, $\Psi^{\M}=\{s\in {}^nM: \M\models \Psi[s]\}$.

We denote the statement `any  countable atomic $\L_l$ theory has an $m$--square atomic model' by ${\sf VT}(\L_l, m).$
Then ${\sf VT}(\L_l, m)$ is equivalent $\neg \Phi(l, m)$, with $\Phi(l, m)$ 
as defined in the introduction.
To unify notation, denote $L_{\omega, \omega}$ by $\L_{\omega}$. Identifying ${\sf VT}(\L_{\omega}, \omega)$ with ${\sf VT}(\omega, \omega)$, 
the `square version' of the second item of theorem \ref{OTT2} can be reformulated as:

\begin{theorem} If for each $3<m<\omega$, there exist a finite relation algebra $\R_m$ having $m-1$ strong blur and no
$m$--dimensional relational basis (equivalently $m$--square representation), then
for $3\leq l<m$ and $l=m=\omega$, 
${\sf VT}(\L_l, m)$ holds  $\iff\l=m=\omega$.
\end{theorem}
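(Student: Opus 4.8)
The plan is to reduce the asserted biconditional to the truth values of the statements $\Psi(l,m)$ supplied by Theorems \ref{main} and \ref{can}, exploiting the chain of equivalences ${\sf VT}(\L_l, m)\iff \neg\Phi(l,m)\iff \neg\Psi(l,m)$ already recorded in the text (recall that $n=3$ is the fixed dimension underlying $\L_l$, since this theorem reformulates the second item of Theorem \ref{OTT2}). Thus it suffices to prove that $\Psi(l,m)$ is false precisely when $l=m=\omega$.

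First I would treat the direction $l=m=\omega$. Here $\L_\omega$ is identified with $L_{\omega,\omega}$, so ${\sf VT}(\L_\omega,\omega)$ is literally ${\sf VT}(\omega,\omega)$, i.e.\ the assertion that every countable atomic $L_{\omega,\omega}$-theory has an atomic model. This is the classical Vaught theorem, a consequence of the Orey--Henkin omitting types theorem, hence true; equivalently $\Psi(\omega,\omega)$ is false.

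For the converse I would argue the contrapositive: assume $l<m$ (the only remaining possibility in the permitted range) and establish $\Psi(l,m)$, whence ${\sf VT}(\L_l,m)$ fails. Split on whether $m$ is finite. If $3\le l<m<\omega$, then $m>3$, so the hypothesis yields a finite relation algebra $\R_m$ carrying a strong $(m-1)$-blur and possessing no $m$-dimensional relational basis. Feeding $\R_m$ into the square version of Theorem \ref{main} with parameters $(n,l,m)=(3,m-1,m)$ --- admissible since $3\le m-1<m$ --- the implication $(1)\Rightarrow(4)$ delivers $\Psi(m-1,m)$; as $l\le m-1$, the square form of the monotonicity clause (item $(5)$ of Theorem \ref{main}) upgrades this to $\Psi(l,m)$. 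If instead $3\le l<\omega=m$, then $\Psi(l,\omega)$ is true by Theorem \ref{can} (stated for all $n\le l<\omega$, the sub-threshold cases $l\in\{3,4\}$ following from the same monotonicity), and no appeal to the hypothesis is needed. Combining the two directions gives ${\sf VT}(\L_l,m)\iff l=m=\omega$.

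The substantive step, and the one I expect to be the main obstacle, is not this assembly but the equivalence $\Phi(l,m)\iff\Psi(l,m)$ on which it rests. One must check that an $\L_l$-formula --- built from the first-order definable connectives of $\L_l$, each given by a first-order formula in $\le l$ variables with at most $n$ free --- isolates the co-atom type in the $\L_l$-theory exactly when the corresponding $L_n$-witness may be taken to use at most $l$ variables, so that ``the type cannot be omitted in an $m$-square $\L_l$-model'' translates faithfully into ``every witness requires more than $l$ variables.'' I would secure this by unwinding the semantics of the definable connectives as specified in the paragraph introducing $\L_l$, verifying that the Tarski--Lindenbaum algebra of an $\L_l$-theory coincides (as an $\RCA_n$, up to term-definitional equivalence) with that of the underlying $L_n$-theory. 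Since $m$-square representations, and hence omissions of the co-atom type, then agree on the two sides, the condition of isolation by $\le l$ variables transfers, closing the gap between $\Phi$ and $\Psi$.
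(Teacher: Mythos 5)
Your proof is correct and takes essentially the same route as the paper: the paper states this theorem without a separate proof, presenting it as a reformulation of the square version of the second item of theorem \ref{OTT2}, whose proof is exactly the blow--up--and--blur machinery you invoke through the square chain of theorem \ref{main} (with parameters $(n,l,m)=(3,m-1,m)$ plus the monotonicity clause), together with theorem \ref{can} for the case $m=\omega$ and classical Vaught's theorem at $(\omega,\omega)$. Your explicit verification of the translation ${\sf VT}(\L_l,m)\iff\neg\Phi(l,m)\iff\neg\Psi(l,m)$ spells out a step the paper likewise asserts without proof, but along precisely the lines the paper intends.
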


\section{Positive results}

Throughout this section, unless otherwise indicated, $n$ is finite ordinal $>2$. 
We prove several positive omitting types theorem for $L_n$ which, as indicated in the introduction, 
can be viewed as a multi--dimensional modal logic.
For any ordinal $\alpha$, following \cite{HMT2}, ${\sf Cs}_\alpha$ denotes the class of cylindric set algebras of dimension $\alpha$ and 
${\sf Gs}_\alpha$ denotes the class of generalized cylindric set algebras of dimension $n$, whose top elements are generalized $\alpha$--dimensional 
catesian spaces.

We start by recalling certain cardinals that play a key role in (positive) omitting types theorems for $L_{\omega, \omega}$.
Let $\sf covK$ be the cardinal used in \cite[Theorem 3.3.4]{Sayed}.
The cardinal $\mathfrak{p}$  satisfies $\omega<\mathfrak{p}\leq 2^{\omega}$
and has the following property:
If $\lambda<\mathfrak{p}$, and $(A_i: i<\lambda)$ is a family of meager subsets of a Polish space $X$ (of  which Stone spaces of countable Boolean algebras are examples)  
then $\bigcup_{i\in \lambda}A_i$ is meager. For the definition and required properties of $\mathfrak{p}$, witness \cite[pp. 3, pp. 44-45, corollary 22c]{Fre}. 
Both cardinals $\sf cov K$ and $\mathfrak{p}$  have an extensive literature.
It is consistent that $\omega<\mathfrak{p}<\sf cov K\leq 2^{\omega}$ \cite{Fre},
so that the two cardinals are generally different, but it is also consistent that they are equal; equality holds for example in the Cohen
real model of Solovay and Cohen.  Martin's axiom implies that  both cardinals are the continuum.

Next we formulate the algebraic version of an omitting types theorem counting in countable infinite dimensions:

\begin{definition}\label{defott} Let $\alpha$ be any ordinal and $\lambda$ be a cardinal. 
If $\A\in \sf RCA_{\alpha}$ and $\bold X=(X_i: i<\lambda)$ is  family of subsets of $\A$, we say that {\it $\bold X$ is omitted in $\C\in \sf Gs_{\alpha}$}, 
if there exists an isomorphism 
$f:\A\to \C$ such that $\bigcap f(X_i)=\emptyset$ for all $i<\lambda$. When we want to stress the role of $f$, 
we say that $\bold X$ is omitted in $\C$ via $f$. If $X\subseteq \A$ and $\prod X=0$, 
then we refer to $X$ as a {\it non-principal type} of $\A$.
\end{definition}
To prove the main result on positive omitting types theorems, we need the following lemma due to Shelah:
\begin{lemma} \label{sh} Assume that $\lambda$ is an infinite regular cardinal. 
Suppose that $T$ is a first order theory,
$|T|\leq \lambda$ and $\phi$ is a formula consistent with $T$,  then there exist models $\M_i: i<{}^{\lambda}2$, each of cardinality $\lambda$,
such that $\phi$ is satisfiable in each,  and if $i(1)\neq i(2)$, $\bar{a}_{i(l)}\in M_{i(l)}$, $l=1,2,$, $\tp(\bar{a}_{l(1)})=\tp(\bar{a}_{l(2)})$,
then there are $p_i\subseteq \tp(\bar{a}_{l(i)}),$ $|p_i|<\lambda$ and $p_i\vdash \tp(\bar{a}_ {l(i)})$ ($\tp(\bar{a})$ denotes the complete type realized by
the tuple $\bar{a}$)
\end{lemma}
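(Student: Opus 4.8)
The plan is to realize the $2^{\lambda}$ models as branches of a splitting tree of ``small'' elementary submodels of size $<\lambda$, and to read off the required small generation of shared types from the level at which a tuple first appears. First I would reduce to a convenient theory: adjoin to the language $L$ of $T$ a finite tuple $\bar c$ of new constant symbols and set $T^{+}=T\cup\{\phi(\bar c)\}$. Since $\phi$ is consistent with $T$, the theory $T^{+}$ is consistent, and $|T^{+}|\leq\lambda$. Any model of $T^{+}$ satisfies $\phi$ once the constants are forgotten, so it suffices to build the desired family for $T^{+}$.

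Next I would construct, by the \ls\ theorem together with elementary amalgamation over a common submodel, a continuous increasing elementary tree $(N_s:s\in {}^{<\lambda}2)$ of models of $T^{+}$ with each $|N_s|<\lambda$: the root is a model of $T^{+}$ of size $<\lambda$; at a successor node I take $N_{s^{\frown}0}$ and $N_{s^{\frown}1}$ to be elementary extensions of $N_s$ chosen so as to \emph{split}, i.e.\ to disagree on a designated formula with parameters in $N_s$, which is what will force the branch models to be pairwise non-isomorphic; and at a limit level $\delta<\lambda$ I set $N_s=\bigcup_{\alpha<\delta}N_{s\restriction\alpha}$, which remains elementary and, by regularity of $\lambda$, of size $<\lambda$. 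For each branch $\eta\in {}^{\lambda}2$ I put $M_\eta=\bigcup_{\alpha<\lambda}N_{\eta\restriction\alpha}$; then $M_\eta\models T^{+}$, $|M_\eta|=\lambda$, and $\phi$ is satisfiable in $M_\eta$, which produces the family indexed by ${}^{\lambda}2$.

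The heart of the matter is the thinness clause. The key point is that, since $\lambda$ is regular and each $N_s$ is small, every finite tuple $\bar a\in M_\eta$ already lies in some $N_{\eta\restriction\alpha}$ with $\alpha<\lambda$. I would run the successor step with a $\lambda$-indexed bookkeeping so that the splitting formulas committed to before level $\alpha$ form a set $p$ with $|p|<\lambda$ which, together with $T^{+}$, decides the complete type of every tuple that has appeared by level $\alpha$; then $p\cap\tp(\bar a)$ is a subtype of $\tp(\bar a)$ of size $<\lambda$ with $T^{+}\cup(p\cap\tp(\bar a))\vdash\tp(\bar a)$. If $\bar a_{i(1)}\in M_{i(1)}$ and $\bar a_{i(2)}\in M_{i(2)}$ with $i(1)\neq i(2)$ realize the same complete type, both branches must have passed through the same committing level, and this common subtype supplies the required $p_i\subseteq\tp(\bar a_{i(l)})$ with $|p_i|<\lambda$ and $p_i\vdash\tp(\bar a_{i(l)})$.

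The main obstacle I anticipate is precisely reconciling the two competing demands inside a single tree: on the one hand splitting richly enough that the $2^{\lambda}$ branches yield genuinely distinct models realizing $\phi$, and on the other hand keeping the decisions made below each level few enough (fewer than $\lambda$) to isolate, modulo $T^{+}$, every type realized by that level. Designing the bookkeeping so that it simultaneously achieves both, and verifying that nothing collapses at limit stages where the language may itself already have size $\lambda$, is the delicate step; I would expect the bulk of the work, and the essential use of the regularity of $\lambda$, to be concentrated there.
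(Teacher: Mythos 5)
There is a genuine gap here, and it is fatal to the architecture of your construction rather than to a repairable detail. In your tree, every node $N_s$ is an elementary submodel of every branch model $M_\eta$ whose branch passes through $s$, and every node lies on $2^{\lambda}$ distinct branches. By elementarity, a tuple $\bar a\in N_s$ realizes the \emph{same} complete type over $\emptyset$ in all of these $M_\eta$. The conclusion of the lemma therefore forces every type realized anywhere in your tree --- in particular every type realized in the root --- to be $\lambda$-isolated. Your bookkeeping claim (that the fewer than $\lambda$ formulas committed below level $\alpha$ decide, modulo $T^{+}$, the complete type of every tuple that has appeared by level $\alpha$) is exactly this assertion, so it is not a delicate step that care at limit stages can arrange: it is impossible for general $T$. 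Concretely, take $\lambda=\omega$ and let $T$ be a complete countable theory with no atomic model, e.g.\ the theory of countably many independent unary predicates used later in this paper, for which $S_n(T)$ has no isolated points at all; then every model of $T$ realizes a non-isolated type. Since for $\lambda=\omega$ being $\lambda$-isolated is the same as being principal (a finite subtype is equivalent to a single formula), any element of your root $N_{\langle\rangle}$ already witnesses failure of the overlap condition between any two branch models. No bookkeeping can rescue a construction in which the $2^{\lambda}$ models share a common elementary submodel; the models demanded by the lemma must share \emph{no} realizations of non-$\lambda$-isolated types whatsoever.

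Two further points. First, your successor step is incoherent as stated: two elementary extensions of $N_s$ cannot ``disagree on a designated formula with parameters in $N_s$,'' since each satisfies exactly the formulas that $N_s$ satisfies; splitting can only be achieved by realizing versus omitting types over $N_s$, and in any case pairwise non-isomorphism is far weaker than the overlap condition actually required. Second, note that the paper does not prove this lemma at all: it is Shelah's theorem, quoted from \cite[Theorem 5.16, Chapter IV]{Shelah}. The proof there has a different shape: non-$\lambda$-isolation is used to split into $2^{\lambda}$ pairwise contradictory branches of a tree of types, and then for each branch a model of cardinality $\lambda$ is constructed \emph{separately} --- realizing its own branch while omitting the types attached to all other branches --- by a $\lambda$-stage Henkin-style omitting-types argument; this is where the regularity of $\lambda$ and the bound $|T|\leq\lambda$ are genuinely used. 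The models are never grown as overlapping unions over a common stem.
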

\begin{proof} \cite[Theorem 5.16, Chapter IV]{Shelah}.
\end{proof}

In the next theorem $n<\omega$:
\begin{theorem}\label{i} Let $\A\in \bold S_c\Nr_n\CA_{\omega}$ be countable.  Let $\lambda< 2^{\omega}$ and let 
$\bold X=(X_i: i<\lambda)$ be a family of non-principal types  of $\A$.
Then the following hold:
\begin{enumerate}
\item If $\A\in \Nr_n\CA_{\omega}$ and the $X_i$s are maximal non--principal ultrafilters,  then $\bold X$ can be omitted in a ${\sf Gs}_n$.
Furthrmore, the condition of maximality cannot be dispensed with,
\item Every subfamily of $\bold X$ of cardinality $< \mathfrak{p}$ can be omitted in a ${\sf Gs}_n$; in particular, every countable 
subfamily of $\bold X$ can be omitted in a ${\sf Gs}_n$,
\item  If $\A$ is simple, then every subfamily 
of $\bold X$ of cardinlity $< \sf covK$ can be omitted in a ${\sf Cs}_n$, 
\item It is consistent, but not provable (in $\sf ZFC$), that $\bold X$ can be omitted in a ${\sf Gs}_n$,
\item If $\A\in \Nr_n\CA_{\omega}$ and $|\bold X|<\mathfrak{p}$, then $\bold X$ can be omitted $\iff$ every countable subfamily of $\bold X$ can be omitted.   
If $\A$ is simple, we can replace $\mathfrak{p}$ by $\sf covK$. 
\item If $\A$ is atomic, {\it not necessarily countable}, but have countably many atoms, 
then any family of non--principal types can be omitted in an atomic $\sf Gs_n$; in particular, 
$\bold X$ can be omitted in an atomic ${\sf Gs}_n$; if $\A$ is simple, we can replace $\sf Gs_n$ by 
$\sf Cs_n$.
\end{enumerate}
\end{theorem}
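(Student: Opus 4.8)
The plan is to reduce all six items to a single Baire--category argument carried out in a countable dilation. Since $\A\in\bold S_c\Nr_n\CA_{\omega}$ is countable, I would first fix $\B\in\CA_{\omega}$ with $\A\subseteq_c\Nr_n\B$, and by a downward L\"owenheim--Skolem argument (close a countable set containing $\A$ under the operations and discard the rest) arrange $\B$ to be countable. The decisive use of the hypothesis $\bold S_c$, rather than plain $\bold S$, is that the embedding is \emph{complete}: an existing meet $\prod^{\A}X_i=0$ is preserved, so each $X_i$ survives as a non--principal type of $\B$, i.e. $\prod^{\B}X_i=0$. By Henkin's neat embedding theorem $\bold S\Nr_n\CA_{\omega}=\RCA_n$ (recalled in the introduction), $\Nr_n\B\in\RCA_n$, so a representation of $\Nr_n\B$ on a generalized cartesian space restricts, through $\A\subseteq_c\Nr_n\B$, to a representation of $\A$ on a ${\sf Gs}_n$. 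The base of such a representation is built from ultrafilters of $\B$ respecting all cylindrifications, and the freedom in choosing it is exactly what I would exploit to omit the types.

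The engine is the Stone space $S$ of the Boolean reduct of $\B$, which is Polish because $\B$ is countable. For a non--principal type $X\subseteq\B$ the closed set $\bigcap_{x\in X}\hat x$, where $\hat x=\{F:x\in F\}$, has empty interior: a basic clopen $\hat b$ contained in it would force $b\le\prod X=0$, so $b=0$. Hence it is nowhere dense. To kill $X$ at \emph{every} assignment of the representation, and not merely at a base point, I would close $X$ under the countably many cylindrifications and substitutions of $\B$; each contributes a further nowhere dense set, so the total obstruction attached to $X$ remains meager. Omitting then amounts to choosing a suitably generic point of $S$ lying outside these meager sets. For item~(2) I invoke the quoted additivity of $\mathfrak p$: a union of $<\mathfrak p$ meager subsets of a Polish space is meager, so a point avoiding all of them exists; since $\omega<\mathfrak p$ this subsumes the countable case. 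For item~(3) simplicity of $\A$ forces the representation onto a single square, landing in a ${\sf Cs}_n$, and here I need only that $<{\sf covK}$ meager sets fail to cover $S$, which is the defining property of ${\sf covK}$.

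Items~(1), (4), (6) each sharpen or replace the category step. For item~(6) the cleanest route is that countably many atoms yield an \emph{atomic} (complete) representation $f$ of $\A$ on an atomic ${\sf Gs}_n$, built by a back--and--forth/Baire argument on the countable atom structure; such an $f$ omits \emph{every} non--principal type at once, for if $\prod X=0$ and $a$ is an atom then $a\cdot x=0$ for some $x\in X$, so no point of $f(a)$ lies in $f(x)$, and since $\bigcup_{a\in\At\A}f(a)=1$ we get $\bigcap_{x\in X}f(x)=\emptyset$ regardless of the size of the family (simplicity again pinning it to a ${\sf Cs}_n$). For item~(1) a \emph{maximal} non--principal type is a single non--isolated point of $S$, so omitting $\lambda$ of them means avoiding $\lambda$ points of a space of size $2^{\omega}$, possible precisely because $\lambda<2^{\omega}$; the genuine neat--reduct hypothesis $\A\in\Nr_n\CA_{\omega}$ supplies the homogeneous base needed to realize a co--$\lambda$ set of complete types. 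That maximality is essential, and the independence in item~(4), I would extract from Shelah's Lemma~\ref{sh}: it yields $2^{\omega}$ models whose types are generated by few formulas, producing, in a model of $\mathfrak p<2^{\omega}$, fewer than $2^{\omega}$ non--maximal non--principal types realized in every representation and hence not omittable, whereas under Martin's Axiom $\mathfrak p=2^{\omega}$ and item~(2) forces omission. Item~(5) is then the remark that below $\mathfrak p$ (resp.\ ${\sf covK}$ in the simple case) the meager ideal is $\lambda$--additive, so global omission reduces to omission of each countable subfamily, the two directions being this additivity and a trivial restriction of the homomorphism.

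The main obstacle I anticipate is the representation lemma underlying the whole scheme: passing from a generic point of the dilation's Stone space to an honest ${\sf Gs}_n$ representation of $\A$ while guaranteeing that each type is killed at \emph{all} clique/assignment positions. This is what forces the careful closure of each type under the countably many operations of $\B$, the verification that the resulting sets stay nowhere dense inside the (Polish, $G_\delta$) subspace of cylindrification--respecting ultrafilters, and it is also the point at which the split between the generalized case ($\mathfrak p$, ${\sf Gs}_n$) and the simple/square case (${\sf covK}$, ${\sf Cs}_n$) is decided.
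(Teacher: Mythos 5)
Your Baire--category engine is essentially the paper's own proof of items (2) and (3) (and your treatment of item (6) via an atomic/complete representation is also what the paper does), but the proposal breaks down exactly where category stops working, namely in items (1) and (4). For the positive half of item (1), the obstruction sets attached to a maximal type $X_i$ in the Stone space of the dilation $\B$ are the closed sets $\bold H_{i,\tau}=\bigcap_{x\in X_i}N_{\s_{\tau}x}$, i.e.\ the sets of ultrafilters of $\B$ extending the filter generated by $\s_{\tau}[X_i]$; these are emphatically not single points, so ``avoiding $\lambda<2^{\omega}$ points'' is not what is required. If instead you move to $S_n(T)$, where a maximal type really is a point, then the object to be chosen is not a point but a whole model (a countable, realizability--closed set of points), and the assertion that some model avoids any prescribed $\lambda<2^{\omega}$ non--principal points is precisely the theorem to be proved --- and it is \emph{false} without maximality, by item (4), so no genericity argument that ignores maximality can be correct. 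The paper's actual mechanism is model--theoretic: because $\A\in \Nr_n\CA_{\omega}$ is a \emph{full} neat reduct, the maximal ultrafilters $X_i$ correspond to complete non--principal $n$--types $\Gamma_i$ of $T$; Shelah's Lemma \ref{sh} supplies $2^{\omega}$ countable models of $T$ that pairwise overlap only on principal types; and a pigeonhole count (the sets of family--members realized in distinct models are pairwise disjoint) shows that if every one of these $2^{\omega}$ models realized some $\Gamma_i$, the family would have cardinality $2^{\omega}$. You cite Shelah's lemma, but you aim it at the wrong targets.

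Second, the negative statements cannot be extracted from Shelah's lemma at all: that lemma is an omitting (positive) tool, producing many models, not non--omittable types. The ``furthermore'' clause of item (1) is witnessed in the paper by an explicit ZFC construction: a relation algebra with atoms $\Id$, $\g_0^i$ $(i<2^{\kappa})$, $\r_j$ $(1\leq j<\kappa)$ in which all monochromatic triangles are forbidden, whose complete representations are killed by the Erd\H{o}s--Rado theorem, followed by an amalgamation class $S$, the algebra $\Ca(S)\in \CA_{\omega}$, and finally the neat reduct $\Nr_n\C$ of a suitable subalgebra $\C$ of $\Ca(S)$ --- an atomic algebra in $\Nr_n\CA_{\omega}$ with uncountably many atoms and no complete representation, so that its single (non--maximal) co--atom type cannot be omitted \cite{bsl}. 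Likewise, the unprovability half of item (4) requires the Casanovas--Farre construction \cite{CF} of ${\sf covK}$ many non--omittable types over a theory whose type spaces have no isolated points (so no prime model exists), together with a genuinely non--trivial lifting argument --- the computation with $g=\Sg^{\A\times \B}f$ and symmetric differences showing that $g$ is a function --- which transfers non--omittability from $\Fm_T$ down to its neat reduct $\Nr_n\Fm_T$. Without these two constructions your proposal contains no proof that maximality is necessary in item (1), nor of the ``not provable in $\sf ZFC$'' half of item (4); as written, it only re-derives the parts of the theorem that follow from category.
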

\begin{proof}

For the first item we prove the special case when $\kappa=\omega$. The general case follows from the fact that (*) below holds for any infinite regular cardinal.
We assume that $\A$ is simple (a condition that can be easily removed). We have $\prod ^{\B}X_i=0$ for all $i<\kappa$ because,
$\A$ is a complete subalgebra of $\B$. Since $\B$ is a locally finite,  we can assume 
that $\B=\Fm_T$ for some countable consistent theory $T$.
For each $i<\kappa$, let $\Gamma_i=\{\phi/T: \phi\in X_i\}$.

Let ${\bold F}=(\Gamma_j: j<\kappa)$ be the corresponding set of types in $T$.
Then each $\Gamma_j$ $(j<\kappa)$ is a non-principal and {\it complete $n$-type} in $T$, because each $X_j$ is a maximal filter in $\A=\mathfrak{Nr}_n\B$.
(*) Let $(\M_i: i<2^{\omega})$ be a set of countable
models for $T$ that overlap only on principal maximal  types; these exist by lemma \ref{sh}.

Asssume for contradiction that for all $i<2^{\omega}$, there exists
$\Gamma\in \bold F$, such that $\Gamma$ is realized in $\M_i$.
Let $\psi:{}2^{\omega}\to \wp(\bold F)$,
be defined by
$\psi(i)=\{F\in \bold F: F \text { is realized in  }\M_i\}$.  Then for all $i<2^{\omega}$,
$\psi(i)\neq \emptyset$.
Furthermore, for $i\neq j$, $\psi(i)\cap \psi(j)=\emptyset,$ for if $F\in \psi(i)\cap \psi(j)$, then it will be realized in
$\M_i$ and $\M_j$, and so it will be principal.

This implies that $|\bold F|=2^{\omega}$ which is impossible. Hence we obtain a model $\M\models T$ omitting $\bold X$
in which $\phi$ is satisfiable. The map $f$ defined from $\A=\Fm_T$ to ${\sf Cs}_n^{\M}$ (the set algebra based on $\M$ \cite[4.3.4]{HMT2})
via  $\phi_T\mapsto \phi^{\M},$ where the latter is the set of $n$--ary assignments in
$\M$ satisfying $\phi$, omits $\bold X$. Injectivity follows from the facts that $f$ 
is non--zero and $\A$ is simple. 

For the second part of (1), we construct an atomic $\B\in \Nr_n\CA_{\omega}$ with uncountably many atoms that is not completely representable. This implies that the maximality condition 
cannot be dispensed with; else the set  of co--atoms of $\B$ call it $X$ will be a non--principal type that cannot be omitted, 
because any ${\sf Gs}_n$ omitting $X$ yields a complete representation of 
$\B$, witness the last paragraph in \cite{Sayed}.  
The construction is taken from \cite{bsl}. 

Let $\kappa$ be an infinite cardinal. 
We specify the atoms and forbidden triples. The atoms are $\Id, \; \g_0^i:i<2^{\kappa}$ and $\r_j:1\leq j<
\kappa$, all symmetric.  The forbidden triples of atoms are all
permutations of $({\sf Id}, x, y)$ for $x \neq y$, \/$(\r_j, \r_j, \r_j)$ for
$1\leq j<\kappa$ and $(\g_0^i, \g_0^{i'}, \g_0^{i^*})$ for $i, i',
i^*<2^{\kappa}.$  In other words, we forbid all the monochromatic triangles.

Write $\g_0$ for $\set{\g_0^i:i<2^{\kappa}}$ and $\r_+$ for
$\set{\r_j:1\leq j<\kappa}$. Call this atom
structure $\alpha$. Let $\R=\Tm(\alpha)$ We claim that $\R$ has no complete representation.
Assume for contradiction that $\R$ has a complete representation $M$.  Let $x, y$ be points in the
representation with $M \models \r_1(x, y)$.  For each $i< 2^{\kappa}$, there is a
point $z_i \in M$ such that $M \models \g_0^i(x, z_i) \wedge \r_1(z_i, y)$.
Let $Z = \set{z_i:i<2^{\kappa}}$.  Within $Z$ there can be no edges labelled by
$\r_0$ so each edge is labelled by one of the $\kappa$ atoms in
$\r_+$.  The Erdos-Rado theorem forces the existence of three points
$z^1, z^2, z^3 \in Z$ such that $M \models \r_j(z^1, z^2) \wedge \r_j(z^2, z^3)
\wedge \r_j(z^3, z_1)$, for some single $j<\kappa$.  This contradicts the
definition of composition in $\R$ (since we avoided monochromatic triangles).

Let $S$ be the set of all atomic $\R$-networks $N$ with nodes
$\kappa$ such that $\{\r_i: 1\leq i<\kappa: \r_i \text{ is the label
of an edge in $N$}\}$ is finite.
Then it is straightforward to show $S$ is an amalgamation class, that is for all $M, N
\in S$ if $M \equiv_{ij} N$,  there is $L \in S$ with
$M \equiv_i L \equiv_j N$.
So $\Ca(S)\in \CA_\omega$.
 Now let $X$ be the set of finite $\R$-networks $N$ with nodes
$\subseteq\kappa$ such that:
(1) each edge of $N$ is either (a) an atom of
$\A$ or (b) a cofinite subset of $\r_+=\set{\r_j:1\leq j<\kappa}$ or (c)
a cofinite subset of $\g_0=\set{\g_0^i:i<2^{\kappa}}$ and

(2)  $N$ is `triangle-closed', i.e. for all $l, m, n \in \nodes(N)$ we
have $N(l, n) \leq N(l,m);N(m,n)$.  That means if an edge $(l,m)$ is
labelled by $\sf Id$ then $N(l,n)= N(m,n)$ and if $N(l,m), N(m,n) \leq
\g_0$ then $N(l,n)\cdot \g_0 = 0$ and if $N(l,m)=N(m,n) =
\r_j$ (some $1\leq j<\omega$) then $N(l,n)\cdot \r_j = 0$.
For $N\in X$ let $N'\in\Ca(S)$ be defined by
$$\set{L\in S: L(m,n)\leq
N(m,n) \mbox{ for } m,n\in \nodes(N)}.$$
For $i\in \omega$, let $N\restr{-i}$ be the subgraph of $N$ obtained by deleting the node $i$.
If $N\in X, \; i<\omega$, then $\cyl i N' =
(N\restr{-i})'$. We omit this part of the proof.

Now let $X' = \set{N':N\in X} \subseteq \Ca(S)$.
Then the subalgebra of $\Ca(S)$ generated by $X'$ is obtained from
$X'$ by closing under finite unions.
Clearly all these finite unions are generated by $X'$.  We must show
that the set of finite unions of $X'$ is closed under all cylindric
operations.  Closure under unions is given.  For $N'\in X$ we have
$-N' = \bigcup_{m,n\in \nodes(N)}N_{mn}'$ where $N_{mn}$ is a network
with nodes $\set{m,n}$ and labeling $N_{mn}(m,n) = -N(m,n)$. $N_{mn}$
may not belong to $X$ but it is equivalent to a union of at most finitely many
members of $X$.  The diagonal $\diag ij \in\Ca(S)$ is equal to $N'$
where $N$ is a network with nodes $\set{i,j}$ and labeling
$N(i,j)=\sf Id$.  Closure under cylindrification is given.

Let $\C$ be the subalgebra of $\Ca(S)$ generated by $X'$.
Then we claim that $\B = \Ra(\C)$.
To see why, each element of $\R$ is a union of a finite number of atoms,
possibly a co-finite subset of $\g_0$ and possibly a co-finite subset
of $\r_+$.  Clearly $\A\subseteq\Ra(\C)$.  Conversely, each element
$z \in \Ra(\C)$ is a finite union $\bigcup_{N\in F}N'$, for some
finite subset $F$ of $X$, satisfying $\cyl i z = z$, for $i > 1$. Let $i_0,
\ldots, i_k$ be an enumeration of all the nodes, other than $0$ and
$1$, that occur as nodes of networks in $F$.  Then, $\cyl
{i_0} \ldots
\cyl {i_k}z = \bigcup_{N\in F} \cyl {i_0} \ldots
\cyl {i_k}N' = \bigcup_{N\in F} (N\restr{\set{0,1}})' \in \R$.  So $\Ra(\C)
\subseteq \A$.
$\R$ is relation algebra reduct of $\C\in\CA_\omega$ but has no complete representation.
Let $\B=\mathfrak{Nr}_n \C$ ($2<n<\omega$). Then
$\B\in \Nr_n\CA_{\omega}$ is atomic, 
but it has no complete representation.

For (2) and (3), we can assume that $\A\subseteq_c \Nr_n\B$, $\B\in \Lf_{\omega}$. 
We work in $\B$. Using the notation on \cite[p. 216 of proof of Theorem 3.3.4]{Sayed} replacing $\Fm_T$ by $\B$, we have $\bold H=\bigcup_{i\in \lambda}\bigcup_{\tau\in V}\bold H_{i,\tau}$
where $\lambda <\mathfrak{p}$, and $V$ is the weak space ${}^{\omega}\omega^{(Id)}$,  
can be written as a countable union of nowhere dense sets, and so can 
the countable union $\bold G=\bigcup_{j\in \omega}\bigcup_{x\in \B}\bold G_{j,x}$.  

So for any $a\neq 0$,  there is an ultrafilter $F\in N_a\cap (S\setminus \bold H\cup \bold G$)
by the Baire category theorem. This induces a homomorphism $f_a:\A\to \C_a$, $\C_a\in {\sf Cs}_n$ that omits the given types, such that
$f_a(a)\neq 0$. (First one defines $f$ with domain $\B$ as on p.216, then restricts $f$ to $\A$ obtaining $f_a$ the obvious way.) 
The map $g:\A\to \bold P_{a\in \A\setminus \{0\}}\C_a$ defined via $x\mapsto (g_a(x): a\in \A\setminus\{0\}) (x\in \A)$ is as required. 
In case $\A$ is simple, then by properties of $\sf covK$, $S\setminus (\bold H\cup \bold G)$ is non--empty,  so
if $F\in S\setminus (\bold H\cup \bold G)$, then $F$ induces a non--zero homomorphism $f$ with domain $\A$ into a $\Cs_n$ 
omitting the given types. By simplicity of $\A$, $f$ is injective.

To prove independence, it suffices to show that $\sf cov K$ many types may not be omitted because it is consistent that $\sf cov K<2^{\omega}$. 
Fix $2<n<\omega$. Let $T$ be a countable theory such that for this given $n$, in $S_n(T)$, the Stone space of $n$--types,  the isolated points are not dense.
It is not hard to find such theories. One such (simple) theory is the following: 
Let $(R_i:i\in \omega)$ be a countable family of unary relations and for each disjoint and finite subsets
 $J,I\subseteq \omega$, let $\phi_{I,J}$ be the formula expressing `there exists $v$ such that $R_i(v)$ holds for all $i\in I$ and $\neg R_j(v)$ holds for all
$j\in J$. Let $T$ be the following countable theory $\{\phi_{I, J}: I, J\text { as above }\}.$  
Using a simple compactness argument one can show that 
$T$ is consistent. Furthermore, for each $n\in \omega$, $S_n(T)$ does not have isolated types at all, 
hence of course the isolated types are not dense in $S_n(T)$ for all $n$. 
Algebraically, this means that if $\A=\Fm_T$, then for all $n\in \omega$, $\Nr_n\A$ is atomless.
(Another example, is the  theory of random graphs.)

This condition excludes the existence of a prime model for $T$ because $T$ has a prime model $\iff$ the isolated points in 
$S_n(T)$ are dense for all $n$. A prime model which in this context is an atomic model,  omits any family of non--principal types (see the proof of the last item).
We do not want this to happen.
Using exactly the same argument in \cite[Theorem 2.2(2)]{CF}, one can construct 
a family $P$ of non--principal $0$--types (having no free variable) of  $T$,
such that $|P|={\sf covK}$ and $P$ cannot be omitted.
Let $\A=\Fm_T$ and for $p\in P$, let $X_p=\{\phi/T:\phi\in p\}$. Then $X_p\subseteq \Nr_n\A$, and $\prod X_p=0$,
because $\Nr_n\A$ is a complete subalgebra of $\A$.
Then we claim that for any $0\neq a$, there is no set algebra $\C$ with countable base
and $g:\A\to \C$ such that $g(a)\neq 0$ and $\bigcap_{x\in X_p}f(x)=\emptyset$.
To see why, let $\B=\Nr_n\A$. Let $a\neq 0$. Assume for contradiction, that there exists
$f:\B\to \D'$, such that $f(a)\neq 0$ and $\bigcap_{x\in X_i} f(x)=\emptyset$. We can assume that
$B$ generates $\A$ and that $\D'=\Nr_n\B$, where $\B\in \Lf_{\omega}$.
Let $g=\Sg^{\A\times \B}f$.
We show that $g$ is a homomorphism with $\dom(g)=\A$, $g(a)\neq 0$, and $g$ omits $P$, and for this, 
it suffices to show by symmetry that $g$ is a function with domain $A$. It obviously has co--domain $B$. 
Settling the domain is easy: $\dom g=\dom\Sg^{\A\times \B}f=\Sg^{\A}\dom f=\Sg^{\A}\Nr_{n}\A=\A.$ 

Let $\bold K=\{\A\in \CA_{\omega}: \A=\Sg^{\A}\Nr_{n}\A\}(\subseteq \sf Lf_{\omega}$). 
We show that $\bold K$ is closed under finite dirct products. Assume that $\C$, $\D\in \bold K$, then we have 
$\Sg^{\C\times \D}\Nr_{\alpha}(\C\times \D)= \Sg^{\C\times \D}(\Nr_{n}\C\times \Nr_{n}\D)=
\Sg^{\C}\Nr_{n}\C\times  \Sg^{\D}\Nr_{n}\D=\C\times \D$.
Observe that 
 $$(a,b)\in g\land \Delta[(a,b)]\subseteq n\implies f(a)=b.$$
Indeed $(a,b)\in \Nr_{n}\Sg^{\A\times \B}f=\Sg^{\Nr_{n}(\A\times \B)}f=\Sg^{\Nr_{\alpha}\A\times \Nr_{n}\B}f=f.$
Now suppose that $(x,y), (x,z)\in g$.
Let $k\in \omega\setminus n.$ Let $\oplus$ denote `symmetric difference'. Then
$(0, {\sf c}_k(y\oplus z))=({\sf c}_k0, {\sf c}_k(y\oplus z))={\sf c}_k(0,y\oplus z)={\sf c}_k((x,y)\oplus(x,z))\in g.$
Also,
${\sf c}_k(0, {\sf c}_k(y\oplus z))=(0,{\sf c}_k(y\oplus z)).$
Thus by (*) we have  $f(0)={\sf c}_k(y\oplus z)$ for any  $k\in \omega\setminus n.$
Hence ${\sf c}_k(y\oplus z)=0$ and so $y=z$. 
We have shown that $g$ is a function, $g(a)\neq 0$,  $\dom g=\A$ and $g$ omits $P$.
This contradicts that  $P$, by its construction,  cannot be omitted.
Assuming Martin's axiom, 
we get $\sf cov K=\mathfrak{p}=2^{\omega}$; together with the above arguments this proves (4).

We now prove (5). Let $\A=\Nr_n\D$, $\D\in {}\sf Lf_{\omega}$ is countable. Let $\lambda<\mathfrak{p}.$ Let $\bold X=(X_i: i<\lambda)$ be as in the hypothesis.
Let $T$ be the corresponding first order theory, so
that $\D\cong \Fm_T$. Let $\bold X'=(\Gamma_i: i<\lambda)$ be the family of non--principal types in $T$ corresponding to $\bold X$. 
If $\bold X'$ is not omitted, then there is a (countable) realizing tree
for $T$, hence there is a realizing tree for a countable subfamily of $\bold X'$ in the sense of \cite[Definition 3.1]{CF}, 
hence a countable subfamily of $\bold X'$ 
cannot be omitted. Let $\bold X_{\omega}\subseteq \bold X$ be the corresponding countable 
subset of $\bold X$. Assume that $\bold X_{\omega}$ can be omitted in a $\sf Gs_n$, via $f$ say. 
Then  by the same argument used in proving item (4)  $f$ can be lifted to $\Fm_T$ omitting $\bold X'$, 
which is a contradiction.  We leave the part when $\A$ is simple to the reader. 

For (6): If $\A\in {\bold S}_n\Nr_n\CA_{\omega}$, is atomic and has countably many atoms, 
then any complete representation of $\A$, equivalently, an atomic representation of $\A$, equivalently, a representation of $\A$ 
omitting the set of co--atoms is as required. 
If $\A$ is simple and completely representable, 
then it is completely represented  
by a ${\sf Cs}_n$, and we are done. 
\end{proof}

\subsection{Omitting types for infinitary logics}

Here we adress omitting types theorems for certain infinitary extensions of first order logic.
Our treatment remains to be purely algebraic. 
We start with a definition:

\begin{definition} Let $\alpha$ be an ordinal. 
\begin{enumarab}
\item  A {\it weak space of dimension $\alpha$} is a set $V$ of the form $\{s\in {}^{\alpha}U: |\{i\in \alpha: s_i\neq p_i\}|<\omega\}$
where $U$ is a non--empty set and $p\in {}^{\alpha}U$.
We denote $V$ by $^{\alpha}U^{(p)}$.

\item We write $\sf Gws_{\alpha}$ short hand for  the class of {\it generalized weak set algebras}
as defined in \cite[Definition 3.1.2, item (iv)]{HMT2}. By definition $\sf Gws_{\alpha}={\bf SP}{\sf Ws}_{\alpha}$,
where $\sf Ws_{\alpha}$ denotes the class of weak set algebra of dimension $\alpha$. The top elements of $\sf Gws_{\alpha}$s are
{\it generalized weak spaces} of dimension $\alpha$; these are disjoint unions of weak spaces of the same dimension.

\item For $\alpha\geq \omega$, we let $\sf Dc_{\alpha}$ denote the class of {\it dimension complemented $\CA_{\alpha}$s}, 
so that $\A\in {\sf Dc}_{\alpha}\iff\ \alpha\setminus \Delta x$ 
is infinite for every $x\in \A$. 

\item Let $\A\in \CA_{\alpha}$. Let $\bold K\in \{\sf Gs_{\alpha}, \sf Gws_{\alpha}\}$. Then $\A$ is {\it completely representable} with respect to $\bold K$, if there exists 
$\C\in \bold K$, and an isomorphism $f:\A\to \C$ such that for all $X\subseteq \A$, 
$f(\sum X)=\bigcup_{x\in X}f(x)$, whenever $\sum X$ exists in $\A$. In this case, we say that $\A$ is completely representable via $f$.
\end{enumarab}
\end{definition}
It can be easily proved that $\A$ is completely representable via $f:\A\to \C$, where $\C$ has top element $V$ 
$\iff$ $\A$ is atomic and $f$ is {\it atomic} in the sense that 
$f(\sum \At\A)=\bigcup_{x\in \At\A}f(x)=V$, where $\At\A$ denotes the set of atoms of $\A$.
  
Let $\alpha$ be a countable infinite ordinal. In the first item of the next theorem we show that Vaught's theorem, hence the omitting types theorem 
fails for an algebraizable formalism of first order logic denoted by  $L_{\alpha}$ in \cite[\S 4.3]{HMT2}. In the second item imposing an algebraic condition 
and {\it guarding semantics} using ${\sf Gws}_{\alpha}$ instead of ${\sf Gs}_{\alpha}$, 
we get instead a positive result.
The third and fourth items show 
that  the countability condition cannot be omitted from the stipulated conditions in the second item.
In fact they say more:  Vaught's theorem fails for uncountable $L_{\alpha}$ theories.

We need a piece of notation: Let $\beta>\alpha$ be any ordinals. 
If $\A\in \CA_{\beta}$, then $\Rd_\alpha\A$ is the $\CA_\alpha$ with the same universe as $\A$ and operations, besides the Boolean operations, 
are restricted to 
(the cylindrifiers) ${\sf c}_i$, (and the diagonal elements) ${\sf d}_{ij}$ with $i, j<\alpha$.

\begin{theorem} \label{ii} Let $\alpha$ be a countable infinite ordinal.
\begin{enumerate} 
\item There exists a countable 
atomic $\A\in \RCA_\alpha$ such that the non--principal types of co--atoms cannot be omitted in a $\sf Gs_{\alpha}$, 

\item If $\A\in \bold S_c\Nr_{\alpha}\CA_{\alpha+\omega}$ is countable, $\lambda$ a cardinal $<\mathfrak{p}$ 
and $\bold X=(X_i: i<\lambda)$ is a family of non--principal types, 
then $\bold X$ can be omited in a $\sf Gws_{\alpha}$ (in the sense of the above definition upon replacing $\sf Gs_{\alpha}$ by ${\sf Gws}_{\alpha}$). 
If $\A$ is simple, and $\A\in \Nr_{\alpha}\CA_{\alpha+\omega}$, 
then we can replace $\mathfrak{p}$ by $\sf covK$,

\item There exists an atomic $\A\in \RCA_{\alpha}$ such that its \d\ completion, namely, $\Cm\At\A$ is not in $\bold S\Nr_{\alpha}\CA_{\alpha+k}$ 
for any $k\geq 3$. Furthermore, $\A$ is not completely representable with respect to $\sf Gws_{\alpha}$,

\item There exists an atomic $\A\in \Nr_{\alpha}\CA_{\alpha+\omega}$ with uncountably 
many atoms such that $\A$ has no complete representation 
with respect to ${\sf Gws}_{\alpha}$. 
\end{enumerate}
\end{theorem}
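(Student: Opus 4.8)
The plan is to read items (1), (3) and (4) as infinite--dimensional analogues of the non--complete--representability results already established, lifting the relevant rainbow and network constructions from finite dimension up to $\alpha$, while item (2) repeats the Baire category argument behind the positive omitting types Theorem~\ref{i}, now with weak set algebras in place of ordinary set algebras as the infinite dimension forces.

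For item (1), I would first use the equivalence recorded just after the definition of complete representability: omitting the non--principal type of co--atoms of an atomic $\A$ amounts to producing an atomic, hence complete, representation. So it suffices to exhibit a countable atomic $\A\in\RCA_\alpha$ with no complete $\sf Gs_\alpha$--representation. I would obtain this by running a rainbow construction of the kind used in Theorem~\ref{can}, but with coloured graphs whose node maps are surjections $a\colon\alpha\to\Delta$, so that the resulting atom structure ${\bf At}$ has signature $\CA_\alpha$ and still only countably many atoms. A \ws\ for \pe\ in the $\omega$--rounded game yields representability of $\Tm{\bf At}$, placing $\A=\Tm{\bf At}$ in $\RCA_\alpha$, while a \ws\ for \pa\ in the longer game keeps $\Cm{\bf At}$ out of $\RCA_\alpha$; since any complete representation of $\A$ would represent $\Cm\At\A=\Cm{\bf At}$, no such representation exists.

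For the positive item (2), I would mirror the proof of items (2) and (3) of Theorem~\ref{i}. Using $\A\in\bold S_c\Nr_\alpha\CA_{\alpha+\omega}$, embed $\A$ as a complete subalgebra of $\Nr_\alpha\B$ with $\B\in\sf Dc_{\alpha+\omega}$, so that $\prod^{\B}X_i=0$ for every $i<\lambda$. In the Stone space of $\B$ one checks that the sets $\bold H_{i,\tau}$ recording realization of $X_i$, together with the sets $\bold G_{j,x}$ recording failure of the cylindrifier conditions, are nowhere dense; here dimension complementedness is exactly what makes the $\bold G_{j,x}$ nowhere dense and what forces the eventual base to be a weak space. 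As $\lambda<\mathfrak p$, the union $\bold H\cup\bold G$ is meager, so for each nonzero $a$ an ultrafilter in $N_a\setminus(\bold H\cup\bold G)$ induces a homomorphism into a weak set algebra omitting $\bold X$; the product over $a\in\A\setminus\{0\}$ lands in a $\sf Gws_\alpha$, and restriction to $\A$ finishes. When $\A$ is simple and $\A\in\Nr_\alpha\CA_{\alpha+\omega}$, $\sf covK$ supplies a single ultrafilter in $S\setminus(\bold H\cup\bold G)$, giving a single $\sf Ws_\alpha$ into which, by simplicity, $\A$ embeds.

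Items (3) and (4) lift the two finite--dimensional non--representability engines. For (3) I would build an atomic $\A\in\RCA_\alpha$ whose $n$--neat reduct $\Nr_n\A$ is (a copy of) the finite rainbow algebra $\A_n$ of Theorem~\ref{can} (from \cite{mlq}), for which $\Cm\At\A_n\notin\bold S\Nr_n\CA_{n+3}$, arranged so that $\Cm\At(\Nr_n\A)$ embeds into $\Nr_n\Cm\At\A$. Since $\A\in\bold S\Nr_\alpha\CA_{\alpha+3}$ implies $\Nr_n\A\in\bold S\Nr_n\CA_{n+3}$, the membership $\Cm\At\A\in\bold S\Nr_\alpha\CA_{\alpha+3}$ would force $\Cm\At\A_n\in\bold S\Nr_n\CA_{n+3}$, a contradiction; and as the classes $\bold S\Nr_\alpha\CA_{\alpha+k}$ decrease with $k$, this already gives $\Cm\At\A\notin\bold S\Nr_\alpha\CA_{\alpha+k}$ for all $k\geq3$. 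The ``furthermore'' then follows, since a complete $\sf Gws_\alpha$--representation of $\A$ would place $\Cm\At\A$ in $\sf Gws_\alpha\subseteq\RCA_\alpha=\bold S\Nr_\alpha\CA_{\alpha+\omega}$. For (4) I would take the monochromatic--triangle relation algebra $\R$ of Theorem~\ref{i} with $2^\kappa$ greens and $\kappa$ reds, form the amalgamation class $S$ of its atomic networks, set $\A=\Nr_\alpha\Ca(S)\in\Nr_\alpha\CA_{\alpha+\omega}$ (atomic, with uncountably many atoms), and note that a complete $\sf Gws_\alpha$--representation of $\A$ would induce, through $\Ra$, a complete representation of $\R$, which the Erd\H{o}s--Rado argument excludes. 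The hard part will be twofold: in (3), arranging that the $n$--neat reduct commutes sufficiently with the Dedekind--MacNeille completion so that the bad finite--dimensional completion genuinely reappears inside $\Nr_n\Cm\At\A$; and, across (1), (3) and (4), transporting the rainbow and network constructions from finite (or $\omega$) dimension to an arbitrary countable infinite $\alpha$ while preserving both representability of the term algebra and non--representability of its completion.
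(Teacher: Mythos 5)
Your item (2) is essentially the paper's own proof: pass to a dilation $\D$ with $\A\subseteq_c\Nr_{\alpha}\D$, which may be taken in ${\sf Dc}_{\alpha+\omega}$, note that non--principal types stay non--principal in $\D$, run the Baire category argument in the Stone space of $\D$, represent on weak units, take a product inside ${\sf Gws}$, relativize back to dimension $\alpha$, and use the ideal correspondence between $\A$ and $\D$ for the simple/${\sf covK}$ case. The other three items, however, contain genuine gaps, and in each case the paper's proof goes a different way precisely to avoid the step you leave open.

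The decisive failure is in item (4). If you follow the finite--dimensional template (the construction from \cite{bsl} used in the first item of the positive omitting types theorem), i.e.\ form the subalgebra $\C$ of $\Ca(S)$ generated by the finite networks and take its $\alpha$--neat reduct, the resulting algebra is \emph{atomless}, not atomic: every element of that neat reduct is a finite union of elements $N'$ with $N$ a finite network whose nodes lie inside $\alpha$, and since $\alpha$ is infinite one can always adjoin a fresh node of $\alpha$ to $N$, obtaining a finite network $M$ with $\emptyset\subset M'\subset N'$, so no $N'$ is an atom. The paper records exactly this obstruction at the opening of \S 5.2 (the $\omega$--dilation there is atomless for this very reason); atomicity of $\Nr_n\C$ in finite dimension $n$ depends on the nodes of $N$ exhausting $n$, which has no analogue in infinite dimension. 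If instead you intend the \emph{full} complex algebra $\Ca(S)$, atomicity of $\Nr_{\alpha}\Ca(S)$ is not free either: the candidate atoms are closure classes of networks under finitely many coordinate changes outside $\alpha$, and proving these are atoms requires a connectivity argument through consistent intermediate networks that you do not supply; you would also still owe the argument that a complete ${\sf Gws}_{\alpha}$ representation of $\A$ induces a complete \emph{relation algebra} representation of $\R$. The paper sidesteps all of this by lifting the finite--dimensional algebras $\C(k)$ via ultraproducts: atomicity is a first order property, so $\B=\Pi_{k}\C_k/F$ is atomic; $\B\cong\Nr_{\omega}(\Pi_k\A_k/F)$ by a standard ultraproduct computation; and non--complete--representability is pulled back to finite dimensions through the reducts $\Rd_m\B$ and the complete embeddings $\C(m)\to\Rd_m\C(k)$, where the known finite--dimensional result applies.

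Items (3) and (1) have the same shape of gap. For (3) you require an atomic $\A\in\RCA_{\alpha}$ with $\Nr_n\A$ isomorphic to the rainbow algebra of theorem \ref{can} \emph{and} with $\Cm\At(\Nr_n\A)$ embedded in $\Nr_n\Cm\At\A$; you rightly flag this neat--reduct/completion compatibility as the hard part, but it is the entire content, and the paper never produces such an $\A$: it again takes ultraproducts of algebras in the signature of $\CA_{\omega}$ whose $k$--reducts are the finite--dimensional counterexamples, and derives the contradiction from $\Rd_m\B\in\bold S\Nr_m\CA_{m+3}$ against the embedding of $\B_m$ into $\Rd_m\B$ (your conditional deduction from these properties is fine; producing an object with them is not). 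For (1), your infinite--dimensional rainbow sketch is both unsubstantiated and flawed as stated: with surjections $a:\alpha\to\Delta$ onto coloured graphs on countably many nodes one gets continuum many atoms, not countably many, and the game--theoretic certificates of representability you invoke are finite--dimensional tools. No such machinery is needed: the paper proves (1) by the cardinality argument of \cite{HH} --- a completely representable algebra satisfying ${\sf d}_{01}<1$ must have $2^{\omega}$ atoms, an argument that works because ${\sf Gs}_{\alpha}$ units are genuine cartesian spaces (this is exactly why (1) concerns ${\sf Gs}_{\alpha}$ while (4) concerns ${\sf Gws}_{\alpha}$) --- applied to a countable elementary subalgebra of the full set algebra $\wp({}^{\omega}2)$, which exists by downward L\"owenheim--Skolem and is automatically countable, atomic and representable, yet cannot be completely represented.
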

\begin{proof}

(1) Using exactly the same argument in \cite{HH}, 
one shows that if $\C\in \CA_{\omega}$ is completely representable 
$\C\models {\sf d}_{01}<1$, then $\At\C=2^{\omega}$. The argument is as follows: 
Suppose that $\C\models {\sf d}_{01}<1$. Then there is $s\in h(-{\sf d}_{01})$ so that if $x=s_0$ and $y=s_1$, we have
$x\neq y$. Fix such $x$ and $y$. For any $J\subseteq \omega$ such that $0\in J$, set $a_J$ to be the sequence with
$i$th co-ordinate is $x$ if $i\in J$, and is $y$ if $i\in \omega\setminus J$.
By complete representability every $a_J$ is in $h(1^{\C})$ and so it is in
$h(x)$ for some unique atom $x$, since the representation is an atomic one.
Let $J, J'\subseteq \omega$ be distinct sets containing $0$.   Then there exists
$i<\omega$ such that $i\in J$ and $i\notin J'$. So $a_J\in h({\sf d}_{0i})$ and
$a_J'\in h (-{\sf d}_{0i})$, hence atoms corresponding to different $a_J$'s with $0\in J$ are distinct.
It now follows that $|\At\C|=|\{J\subseteq \omega: 0\in J\}|=2^{\omega}$.

Take $\D\in {\sf Cs}_{\omega}$ with universe $\wp({}^{\omega}2)$. 
Then $\D\models {\sf d}_{01}<1$ and plainly $\D$ is completely representable. 
Using the downward \ls--Tarski theorem, take a countable  elementary subalgebra $\B$ of $\D$.
This is possible because the signature of $\CA_{\omega}$ is countable.
Then in $\B$ we have $\B\models {\sf d}_{01}<1$ because $\B\equiv \C$. But $\B$ 
cannot be completely 
representable, because if it were then by the above argument, we get that $|\At\B|=2^{\omega}$,
which is impossible because $\B$ is countable. 

(2) Now we prove the second item, which is a generalization of \cite[Theorem 3.2.4]{Sayed}. 
Though the generalization is strict, in the sense that $\sf Dc_{\omega}\subsetneq \bold S_c\Nr_{\omega}\CA_{\omega+\omega}$
\footnote{It is not hard to see that the full set algebra with universe $\wp({}^{\omega}\omega)$ 
is in $\Nr_{\omega}\CA_{\omega+\omega}\subseteq \bold S_c\Nr_{\omega}\CA_{\omega+\omega}$ but it is not
in $\sf Dc_{\omega}$ because for any $s\in {}^{\omega}U$, $\Delta\{s\}=\omega$.}
the proof is the same. Without loss, we can take $\alpha=\omega$. Let $\A\in \CA_{\omega}$ be as in the hypothesis. 
For brevity, let $\beta=\omega+\omega$.  By hypothesis, we have 
$\A\subseteq_c\Nr_{\alpha}\D$, with $\D\in \CA_{\beta}$.
 
We can also assume that $\D\in {\sf Dc}_{\beta}$ by replacing, if necessary, $\D$ by
$\Sg^{\D}\A$.   Since $\A$ is a complete sublgebra of $\Nr_{\omega}\D$  which in turn is a complete subalgebra 
of $\D$, we have $\A\subseteq_c \D$. Thus given $< \mathfrak{p}$ non--principal types in $\A$ 
they stay non--principal in $\D$.  Next one proceeds like in {\it op.cit} since 
$\D\in {\sf Dc}_{\beta}$ is countable;  this way omitting any $\bold X$ consisting of $<\mathfrak{p}$ 
non--principal types.

For all non-zero $a\in \D$, there exists $\B\in \sf Ws_{\beta}$  and a homomorphism $f_a:\D\to \B$ (not necessarily injective) 
such that $f_a(a)\neq \emptyset$ and $f_a$ omits $\bold X$. 
Let $\C=\bold P_{a\in \D, a\neq 0}\B_a\in \sf Gws_{\beta}$. Define $g:\D\to  \C$ by 
$g(x)=(f_a(x): a\in \D\setminus \{0\})$, and then relativize $g$ to $\A$ as follows:

Let $W$ be the top element of $\C$. Then  $W=\bigcup_{i\in I} {}^{\beta}U_i^{(p_i)}$, where 
$p_i\in  {}^{\beta}U_i$ and $^{\beta}U_i^{(p_i)}\cap {}^{\beta}U_j^{(p_j)}$, for $i\neq j\in I$.
Let   $V=\bigcup _{i\in I}{}^{\alpha}U_i^{(p_i\upharpoonright \alpha)}$. 
For $s\in V$, $s\in {}^{\alpha}U_i^{(p_i\upharpoonright \alpha)}$ (for a unique $i$), let $s^+=s\cup p_i\upharpoonright \beta\setminus \alpha$. 
Now define $f:\A\to \wp(V)$, via $a\mapsto \{s\in V: s^+\in g(a)\}$.
Then $f$ is as required.

Now assume that $\A$ is simple, with $\A=\Nr_{\alpha}\D$ and $\D\in {\sf Dc}_{\beta}$.
It suffices to show that  $\D$ is simple, too. Consider the function $F(I)=I\cap \A$, $I$ an ideal in $\D$. 
It is straightforward to check that $F$ establishes an isomorphism between
the lattice of ideals in $\D$ and the lattice of ideals of $\A$ (the order here is  of course $\subseteq$),  
with inverse $G(J)= {\sf Ig}^{\D}(J)$,  $J$ an ideal in $\A$, cf. \cite[Theorem 2.6.71, Remark 2.6.72]{HMT2}. Here 
$\Ig^{\D}J$ denotes the ideal of $\D$  generated by $J$. 
Thus $\A$ is simple $\iff$ $\D$ is simple.

(3)  Throughout this item and the next one $F$ denote a non--principal ultrafilter on $\omega\setminus 3$.
We lift the construction in theorem \ref{can} proving $\Psi(n, n+3)$ 
to the transfinite using ultraproducts. 

For each finite $k\geq 3$, let $\A(k)$
be an atomic countable simple representable
$\CA_k$ such that  $\B(k)=\Cm\At\A(k)\notin \bold S\Nr_k\CA_{k+3}.$ We know by that such algebras exist by theorem \ref{can}.
Let $\A_k$ be an (atomic) algebra having the signature of $\CA_{\omega}$
such that $\Rd_k\A_k=\A(k)$.
Analogously, let $\B_k$ be an algebra having the signature
of $\CA_{\omega}$ such that $\Rd_k\B_k=\B(k)$, and we require in addition that $\B_k=\Cm(\At\A_k)$.
Let $\B=\Pi_{i\in \omega\setminus 3}\B_i/F$.
It is easy to show that 
$\A=\Pi_{i\in \omega\setminus 3}\A_i/F\in \RCA_{\omega}$.
Furthermore, a direct computation gives:
$$\Cm\At\A=\Cm(\At[\Pi_{i\in \omega\setminus 3 }\A_i/F])
=\Cm[\Pi_{i\in \omega\setminus 3}(\At\A_i)/F)]$$
$$=\Pi_{i\in \omega\setminus 3}(\Cm(\At\A_i)/F)
=\Pi_{i\in \omega\setminus 3}\B_i/F
=\B.$$
By the same token, we have $\B\in \CA_{\omega}$. We now show that $\B$ is outside $\bold S\Nr_{\omega}\CA_{\omega+3}$
proving the required.
Assume for contradiction that $\B\in \bold S\Nr_{\omega}\CA_{\omega+3}$.
Then $\B\subseteq \Nr_{\omega}\C$ for some $\C\in \CA_{\omega+3}$.
Let $3\leq m<\omega$ and  let $\lambda:m+3\rightarrow \omega+3$ be the function defined by $\lambda(i)=i$ for $i<m$
and $\lambda(m+i)=\omega+i$ for $i<3$.

Then we get (*) $\Rd^\lambda\C\in \CA_{m+3}$ and $\Rd_m\B\subseteq \Rd_m\Rd^\lambda\C$.
It is straighforward to show that $\B_m$ embeds into $\Rd_m\B_{t}$, whenever $3\leq m<t<\omega$. Call
this embedding $I_t$, so that $I_t: \B_m\to \Rd_m\B_t$ is an injective homomorphism.
Let $\iota( b)=(I_{t}b: t\geq m )/F$ for  $b\in \B_m$.
Then $\iota$ is an injective homomorphism that embeds $\B_m$ into
$\Rd_m\B$.  By (*)  we know that $\Rd_{m}\B\in {\bf S}\Nr_m\CA_{m+3}$, hence  $\B_m\in \bold S\Nr_{m}\CA_{m+3}$, too.
This is a contradiction, and we are done.

(4) This time we lift the construction used in the first item of theorem \ref{i} to the transfinite
using essentially the same idea as in the previous item.
For each finite $k\geq 3$, let $\C(k)$ be an 
algebra in $\Nr_k\CA_{\omega}$ 
having uncountably many atoms that is not completely representable; such algebras 
were constructed in the first item of theorem \ref{i}. 

Let ${\C}_{k}$ be an algebra having the signature of $\CA_{\omega}$ such that
$\Rd_k{\C}_k={\C}(k)=\Nr_{k}\D_{k}$,
where $\D_{k}\in \CA_{\omega}$. 
Let $\B=\Pi_{k\in \omega\setminus 3}\C_{i}/F.$
We will prove that $\B\in \bold \Nr_\omega\CA_{\omega+\omega},$ 
and that $\B$ is not completely representable with respect to ${\sf Gws}_{\omega}$.

Let $\A_{k}$ be an algebra having the signature of
$\CA_{\omega+\omega}$ such that
$\Rd_{\omega}\A_{k}=\D_{k}$.
Then from \cite[Lemma 3.2]{t}, we have
$\Pi_{k\in \omega\setminus 3}\A_{k}/F\in \CA_{\omega+\omega}$.
We now prove that $\B\cong\mathfrak{Nr}_\omega(\Pi_{k\omega\setminus 3}\A_k/F)$.  
Computing for each finite $k\geq 3$:
 $$\Rd_k\C_{k}=\C(k)\cong\Nr_{k}\D_{k}=\Nr_{k}\Rd_{\omega}\A_{k}= \Rd_k\Nr_\omega\A_k.$$
Then from \cite[Lemma 3.3]{t}, using a standard Lo\'s argument we get:
$\Pi_{k\in \omega\setminus 3}\C_k/F\cong\Pi_{k\in \omega\setminus 3}[\mathfrak{Nr}_\omega\A_k/F]\cong\mathfrak{Nr}_\omega(\Pi_{k\in \omega\setminus 3}\A_k/F).$
We are through with the first required.

For the second part, we proceed as follows.  Assume for contradiction
that $\B$ (which is atomic) is completely representable with respect to $\sf Gws_{\omega}$, with isomorphism $f$ establishing the complete representation.
Identifying set algebras with their domain, we have $f: \B\to \wp(V)$, where $V$ is a generalized weak space.
Let $3\leq m<\omega$. Then  $\C=\Rd_m\B$ is completely representable, via $f\upharpoonright \C$, by noting that 
$\Rd_m\wp(V)\cong \wp(W)$ for some $W$; a generalized space of dimension $m$, and that this isomorphism preserves infinite
intersections. 
In more detail, let $U$ be the base of $\D=\wp(V)$, that is $U=\bigcup_{s\in V}\rng(s)$ and let $d\in \D$, $d\neq 0$. 
Fix $x\in d$. For each $s\in {}^nU$, let $s^+=x\upharpoonright (\alpha\setminus n)\cup s$, 
then $s^+\in {}^{\alpha}U$. For $Y\in \D$, let $g_d(Y)=\{y\in {}^nU: y^+\in Y\}$. 
Then $g_d:\Rd_m\wp(V)\to \wp(^nU)$ is a homomorphism that preserves infinite intersections, and 
$g_d(d)\neq 0$. By taking the direct product of images (varying non-zero $d\in \wp(V)$), we get an embedding  
$g:\Rd_m\wp(V)\to \C$, where $\C\in {\sf Gs}_m$ and $g$ 
preserves infinite intersections.  Then $g\circ (f\upharpoonright \C)$ 
is the desired complete representation of $\C=\Rd_m\B$.

We have shown that for $3\leq m<\omega$, 
$\C=\Rd_m\B$ is completely representable, with the complete representation induced by the complete represenation of 
$\C$.

From the finite dimensional case we have $\sf R=\Ra\C$, $\C\in \CA_{\omega}$. Then, for any $2<m<n<\omega$, the identity map 
is  a complete embedding from $\C(m)=\Nr_m\C$ to $\Rd_{m}\Nr_n\C=\Rd_m\C(n)$.
Fix $2<m<\omega$. For each finite $k>m$ let $I_{k}$ be a complete embedding 
from ${\C}(m)$ into $\Rd_m {\C}(k)$; we know that such an embedding exists. Let $\iota( b)=(I_k(b): k\in \omega)/F$ 
for  $b\in \C(m)$.

Then $\iota$ is a complete embedding  from $\C(m)$ into $\Rd_m\B=\Rd_m(\Pi_{k}\C_{k}/F$).
We can assume that $\C(m)\subseteq_c \Rd_m\B$. We show that 
$\C(m)$  is completely representable which leads to a contradiction finishing the proof.
For brevity denote $\Rd_m\B$ by $\D$ and we identify set algebras with their universe.
Let $f:\D\to \wp(V)$ be a complete representation of
$\D$  via $f$, where $V$ is an $n$--dimensional generalized space. 
We claim that $g=f\upharpoonright \C(m)$ is the required complete representation of $\C(m)$. Let $X\subseteq \C(m)$ be such that $\sum^{\C(m)}X=1$. 
Then by $\C(m)\subseteq_c \D$, we have  $\sum ^{\D}X=1$. Furthermore, for all $x\in X(\subseteq \C(m))$ we have $f(x)=g(x)$, so that 
$\bigcup_{x\in X}g(x)=\bigcup_{x\in X} f(x)=V$ 
(since $f$ is a complete representation).  This finishes the proof. 
\end{proof}

\section{Vaught's theorem and complete representations}

Fix $2<n<\omega$. Here we approach Vaught's theorem for fragments of $L_n$ and variants of $L_{\omega, \omega}$ by
proving positive results on complete representability for various classes of cylindric--like algebras. 

\subsection{Complete representations}

We denote the class of completely representable algebras
with respect to 
$\sf Gws_{\alpha}$ by ${\sf CRCA}_{\alpha}.$ 

The next lemma tells us that the notions of atomicity and complete representation of an algebra 
are inherited by complete (hence dense) sublgebras. The second part is formulated only for ${\sf CA}_n$s, but it is valid for all 
classes of algebras considered in what follows (with the same proof).

\begin{lemma}\label{join} Let $\D$ be a Boolean algebra.
Assume that $\A\subseteq_c \D$. If $\D$ is atomic, then $\A$ is atomic
\cite[Lemma 2.16]{HHbook}.  If $\D\in \sf CRCA_\alpha$, $\alpha$ any ordinal, 
is completely representable, then $\A\in \sf CRCA_{\alpha}$.
\end{lemma}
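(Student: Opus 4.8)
The first assertion is quoted from \cite[Lemma 2.16]{HHbook}, so the plan is to concentrate on the second. My strategy is simply to restrict the given complete representation of $\D$ to $\A$ and to check that nothing is lost. The natural tool is the characterisation recorded just after the definition of complete representability: a map $h:\A\to\C$ with $\C$ of top element $V$ is a complete representation precisely when $\A$ is atomic and $h$ is \emph{atomic}, i.e. $\bigcup_{a\in\At\A}h(a)=V$. Hence it is enough to produce an atomic isomorphism of $\A$ onto a member of ${\sf Gws}_{\alpha}$.

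First I would fix an isomorphism $f:\D\to\C$ witnessing $\D\in{\sf CRCA}_{\alpha}$, where $\C\in{\sf Gws}_{\alpha}$ has top element $V$; by definition $\D$ is atomic and $\bigcup_{d\in\At\D}f(d)=V$. Put $g=f\restriction\A$. Being the restriction of an injective homomorphism, $g$ is an injective homomorphism onto the subalgebra $f[A]$ of $\C$. Since ${\sf Gws}_{\alpha}={\bf SP}{\sf Ws}_{\alpha}$ is closed under ${\bf S}$, we get $f[A]\in{\sf Gws}_{\alpha}$; and because $f$ is an isomorphism while $\A$ and $\D$ share the same unit, $g(1^{\A})=f(1^{\D})=V$, so $f[A]$ still has top element $V$. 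Thus $g:\A\to f[A]$ is an isomorphism onto a ${\sf Gws}_{\alpha}$, and it remains only to verify that $g$ is atomic.

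The crux is the equality $\bigcup_{a\in\At\A}g(a)=V$. By the first assertion $\A$ is atomic, and in any atomic Boolean algebra the set of atoms has supremum the unit: if $u$ were an upper bound of $\At\A$ with $u\neq 1^{\A}$, then $-u\neq 0$ would lie above some atom of $\A$, which is also $\le u$, forcing that atom to be $0$; hence the only upper bound is $1^{\A}$ and $\sum^{\A}\At\A=1^{\A}$. Now I invoke the hypothesis $\A\subseteq_{c}\D$: a supremum existing in $\A$ is computed identically in $\D$, so $\sum^{\D}\At\A=1^{\D}=1^{\A}$. Finally, $f$ preserves every existing supremum of $\D$, whence $\bigcup_{a\in\At\A}g(a)=\bigcup_{a\in\At\A}f(a)=f\bigl(\sum^{\D}\At\A\bigr)=f(1^{\D})=V$. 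So $g$ is atomic, and by the quoted characterisation $\A$ is completely represented via $g$, that is, $\A\in{\sf CRCA}_{\alpha}$.

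The only step that is more than bookkeeping is the transfer of $\sum^{\A}\At\A=1$ to $\sum^{\D}\At\A=1$, and this is exactly where $\A\subseteq_{c}\D$ (rather than mere subalgebra-hood) is indispensable: it is what guarantees that the $f$-images of the atoms of $\A$ already exhaust the top $V$. Everything else --- injectivity and homomorphism-preservation of a restriction, closure of ${\sf Gws}_{\alpha}$ under subalgebras, and the atomicity characterisation --- is routine. Since the argument is purely Boolean apart from that characterisation, it applies verbatim to the other cylindric-like signatures alluded to in the statement.
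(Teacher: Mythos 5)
Your proposal is correct and follows essentially the same route as the paper: restrict the complete representation $f$ of $\D$ to $\A$, use $\A\subseteq_c\D$ to transfer the relevant supremum from $\A$ to $\D$, and then invoke the fact that $f$ preserves existing suprema to conclude the restriction is a complete representation. The only cosmetic difference is that you verify the single sum $\sum\At\A=1$ and appeal to the atomic-map characterisation, whereas the paper checks arbitrary $X\subseteq\A$ with $\sum^{\A}X=1$ directly; these amount to the same argument.
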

\begin{proof}
Let $\A$ and $\D$ be as indicated. We show that $\A$ is atomic. Let $a\in A$ be non--zero. Then since $\D$ is atomic, 
there exists an atom $d\in D$, such that $d\leq a$. 
Let $F=\{x\in A: x\geq d\}$. Then $F$ is an ultrafilter of $\A$. 
It is clear that $F$ is a filter. To prove maximality, assume that $c\in A$ and $c\notin F$, then $-c\cdot d\neq 0$, so $0\neq -c\cdot d\leq d$, hence $-c\cdot d=d$, 
because $d$ is an atom in $\B$, thus $d\leq -c$, and we get by definition that $-c\in F$.  We have shown that $F$ is an ultrafilter. 
We now show  that $F$ is a principal ultrafilter in $\A$, 
that is, it is generated by an atom. Assume for contradiction that it is not, so that $\prod^{\A} F$ exists, because $F$ is an ultrafilter 
and $\prod ^{\A}F=0$, because it is non--principal. 
But $\A\subseteq_c \D$, so we obtain $\prod^{\A}F=\prod^{\D}F=0$. This contradicts that $0<d\leq x$ for all $x\in F$.
Thus $\prod^{\A}F=a'$, $a'$ is an atom in $\A$, $a'\in F$ 
and $a'\leq a$, because $a\in F$.  We have proved the first required.

Let  $\A\subseteq_c \D$ and 
assume that $\D$ is completely representable.  We will show that $\A$ is completely representable. 
Identifying set agebras with their domain, let $f:\D\to \wp(V)$ be a complete representation of
$\D$ where $V$ is a ${\sf Gws}_{\alpha}$ unit. 
We claim that $g=f\upharpoonright \A$ is a complete representation of $\A$. 
The argument used is the same argument used in the proof of the last item of theorem \ref{ii}.
Let $X\subseteq \A$ be such that $\sum^{\A}X=1$. 
Then by $\A\subseteq_c \D$, we have  $\sum ^{\D}X=1$. Furthermore, for all $x\in X(\subseteq \A)$ we have $f(x)=g(x)$, so that 
$\bigcup_{x\in X}g(x)=\bigcup_{x\in X} f(x)=V$, since $f$ is a complete representation, 
and we are done. 
\end{proof}
For a class $\bold K$ of $\sf BAO$s, we write $\bold K\cap \bf At$ for the class of atomic algebras in $\bold K$.
\begin{theorem}\label{iii}
Let $\alpha$ be an arbitary countable ordinal.
Then the following hold:
\begin{enumerate}
\item $\sf CRCA_{\alpha} \subseteq \bold S_c\Nr_{\alpha}(\CA_{\alpha+\omega}\cap {\bf At})\cap \bf At\subseteq \bold S_c\Nr_{\alpha}\CA_{\alpha+\omega}\cap \bf At,$
\item If  $\alpha<\omega$, and $\A\in {\sf CRCA}_{\alpha}$, then \pe\ has  a \ws\ in $G_{\omega}(\At\A)$ and $\bold G^{\omega}(\At\A),$ 
\item All  reverse inclusions and implications in the previous two items hold, 
if $\A$ has countably many atoms, or $\alpha\leq 2$,
\item For $\alpha>2$, $\sf CRCA_{\alpha}\subsetneq \Nr_{\alpha}\CA_{\alpha+\omega}\cap \bf At\subsetneq 
\bold S_c\Nr_{\alpha}\CA_{\alpha+\omega}\cap \bf At,$  
\item If $2<\alpha<\omega$, then ${\sf CRCA}_{\alpha}\subsetneq {\bf El}{\sf CRCA}_{\alpha}$ \cite{HH}. Furthermore, for any $\alpha<\omega$, 
${\bf El}\sf CRCA_{\alpha}={\bf El}\bold S_c\Nr_{\alpha}(\CA_{\omega}\cap {\bf At})={\bf El}\bold S_c\Nr_{\alpha}\CA_{\omega}\cap \bf At
={\bf El}(\bold S_c\Nr_{\alpha}\CA_{\omega}\cap \bf At),$
\item For $2<\alpha<\omega$, ${\bf EL}{\sf CRCA}_{\alpha}$ is not finitely 
axiomatizable.

\end{enumerate}
\end{theorem}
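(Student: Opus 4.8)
The plan is to exploit that ${\bf El}{\sf CRCA}_\alpha$ is an \emph{elementary} class: being an elementary closure it is closed under ultraproducts and under elementary equivalence, so it is finitely axiomatizable if and only if its complement is \emph{also} closed under ultraproducts. Hence it suffices to produce a sequence of atomic algebras $(\A_r:r<\omega)$, each lying outside ${\bf El}{\sf CRCA}_\alpha$, together with a non--principal ultrafilter $F$ on $\omega$ for which $\prod_{r<\omega}\A_r/F\in{\bf El}{\sf CRCA}_\alpha$. A single such ``jump'' across the class contradicts closure of the complement under ultraproducts and so settles the claim.

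The working description of the class comes from item (5) together with the \ws\ characterisation in item (2): relative to $\CA_\alpha$, the elementary closure is cut out by the sentences $(\sigma_k:k<\omega)$, where $\sigma_k$ asserts that \pe\ has a \ws\ in the $k$--round game $G_k(\At\A)$. The point is that for finite $k$ a play of $G_k$ touches only boundedly many nodes, so ``\pe\ has a \ws\ in $G_k$'' is a genuine first--order sentence $\sigma_k$; every completely representable algebra satisfies each $\sigma_k$, since a complete representation yields a \ws\ in $G_\omega$ by item (2), hence in every $G_k$, so $\sigma_k\in\mathrm{Th}({\sf CRCA}_\alpha)$; and item (5) identifies the $\CA_\alpha$--models of $\{\sigma_k:k<\omega\}$ with ${\bf El}{\sf CRCA}_\alpha$. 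Consequently, if \pa\ has a \ws\ in some finite game $G_k(\At\A)$, then by determinacy of the finite--length game $\A\not\models\sigma_k$, whence $\A\notin{\bf El}{\sf CRCA}_\alpha$; conversely any algebra satisfying every $\sigma_k$ lies in the class.

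For the sequence I would take the rainbow $\CA_n$s of $\S3$ (as in theorem~\ref{can}) with finite parameters tuned to $r$. Fix $\alpha=n$, $2<n<\omega$, and let $\A_r=\A_{{\sf G}_r,{\sf R}_r}$ be the finite rainbow algebra whose greens and reds are finite linear orders chosen so that the order--comparison game driving \pe's choice of matching red indices (precisely the mechanism used against the $\Z,\N$ rainbow in the proof of theorem~\ref{can}) is survived by \pe\ for $r$ rounds but lost at round $r+1$. Then \pe\ has a \ws\ in $G_r(\At\A_r)$ while \pa\ has a \ws\ in $G_{r+1}(\At\A_r)$, so by the previous paragraph each $\A_r$ is outside ${\bf El}{\sf CRCA}_\alpha$. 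Now fix a non--principal $F$ on $\omega$ and set $\A=\prod_r\A_r/F$. For each fixed $k$, every $r\ge k$ gives \pe\ a \ws\ in $G_k(\At\A_r)$ (winning the longer game $G_r$ entails winning $G_k$), and $\{r:r\ge k\}\in F$; so by \Los's theorem $\A\models\sigma_k$. As this holds for all $k$, we conclude $\A\in{\bf El}{\sf CRCA}_\alpha$, completing the jump.

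The main obstacle is the tuning in the third paragraph: selecting finite rainbow parameters ${\sf G}_r,{\sf R}_r$ whose shortest \pa--win has length exactly $r+1$, which requires verifying \emph{simultaneously} that \pe\ survives $r$ rounds and that \pa\ forces a defect (a strictly decreasing chain of red indices that runs out, as in theorem~\ref{can}) precisely at round $r+1$. This is the delicate rainbow pebble--game bookkeeping; once it is in place the reduction to ultraproducts and the \Los transfer are routine, the only remaining point being the first--order expressibility of each $\sigma_k$, which rests on the boundedness of the node count in an $r$--round play.
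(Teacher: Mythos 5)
Your proposal addresses only the last clause of the theorem. Items (1)--(5) are nowhere proved; moreover, your argument for item (6) explicitly invokes item (2) (a complete representation yields a \ws\ for \pe\ in $G_{\omega}(\At\A)$, hence in every $G_k$) and item (5) (the identification ${\bf El}{\sf CRCA}_{\alpha}={\sf LCA}_{\alpha}$, i.e.\ that the elementary closure is cut out, over atomicity, by the Lyndon sentences $\sigma_k$) as black boxes. In the paper those earlier items carry most of the weight of the theorem: the neat-embedding/dilation argument behind (1), the term-algebra and omitting-types argument behind (3), the two counterexamples behind (4), and the ultrapower--elementary-chain argument behind (5). So, as a proof of the stated theorem, what you have is a conditional proof of one clause given the other five; that is a genuine gap.

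Even granting items (1)--(5), your item-(6) argument has the correct skeleton, and it is the same skeleton the paper uses: since ${\sf LCA}_{\alpha}$ is elementary, it fails to be finitely axiomatizable as soon as one exhibits atomic algebras outside it whose non-principal ultraproduct lies inside it; determinacy of the finite-round games and the first-order codability of each $\sigma_k$ are unproblematic. But the load-bearing step---producing finite rainbow parameters ${\sf G}_r,{\sf R}_r$ with \pe\ winning $G_r(\At\A_r)$ while \pa\ wins some finite-round game on $\At\A_r$---is exactly what you defer as ``delicate rainbow pebble-game bookkeeping,'' so the proof is unfinished at its only nontrivial point. Two remarks here. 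First, you over-constrain yourself: you do not need \pa's shortest win to occur precisely at round $r+1$; any finite $m(r)$ with \pa\ winning $G_{m(r)}(\At\A_r)$ suffices, and such gradings are standard (greens a finite complete graph strictly larger than the reds, analysed through the usual \ef\ pebble game), so your route can in fact be completed. Second, the paper sidesteps the game analysis for the ``outside'' half altogether: its witnesses are the complex algebras $\Cm\At\A_l$ of the blow-up-and-blur algebras $\A_l=\Nr_{\alpha}\Bb_l(\R_l,J_l,E_l)$ already built for theorem \ref{main}, which lie outside ${\sf LCA}_{\alpha}$ for the cheap reason that they are not representable (${\sf LCA}_{\alpha}\subseteq\RCA_{\alpha}$), while the ultraproduct of the atom structures is completely representable, giving $\Pi_{l}\Cm\At\A_l/F\cong\Cm[\Pi_{l}\At\A_l/F]\in{\sf CRCA}_{\alpha}\subseteq{\sf LCA}_{\alpha}$. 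Your rainbow grading, once actually proved, yields a self-contained game-theoretic argument; the paper's choice gets its witnesses for free from constructions it already has in hand.
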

\begin{proof} 
The first inclusion in the first item: Let $\A\in {\sf CRCA}_\alpha$. Then $\A$ is atomic. Assume that $\M$ is the base of a complete representation of $\A$, whose
unit is a weak generalized space,
that is, $1^{\M}=\bigcup {}^nU_i^{(p_i)}$ $p_i\in {}^{\alpha}U_i$, where $^{\alpha}U_i^{(p_i)}\cap {}^{\alpha}U_j^{(p_j)}=\emptyset$ for distinct $i$ and $j$, in some
index set $I$ and there is an isomorphism $t:\B\to \C$, where $\C\in \sf Gs_{\alpha}$ 
has unit $1^{\M}$, and $t$ preserves arbitrary meets carrying
them to set--theoretic intersections.
For $i\in I$, let $E_i={}^{\alpha}U_i^{(p_i)}$. Take  $f_i\in {}^{\alpha+\omega}U_i^{(q_i)}$ where $q_i\upharpoonright \alpha=p_i$
and let $W_i=\{f\in  {}^{\alpha+\omega}U_i^{(q_i)}: |\{k\in \alpha+\omega: f(k)\neq f_i(k)\}|<\omega\}$.
Let ${\C}_i=\wp(W_i)$. Then $\C_i$ is atomic; indeed the atoms are the singletons. 
 
Let $x\in \Nr_{\alpha}\C_i$, that is ${\sf c}_ix=x$ for all $\alpha\leq i<\alpha+\omega$.
Now if  $f\in x$ and $g\in W_i$ satisfy $g(k)=f(k) $ for all $k<\alpha$, then $g\in x$.
Hence $\Nr_{\alpha}\C_i$
is atomic;  its atoms are $\{g\in W_i:  \{g(i):i<\alpha\}\subseteq U_i\}.$
Define $h_i: \A\to \Nr_{\alpha}\C_i$ by
$h_i(a)=\{f\in W_i: \exists a'\in \At\A, a'\leq a;  (f(i): i<\alpha)\in t(a')\}.$
Let $\D=\bold P _i \C_i$. Let $\pi_i:\D\to \C_i$ be the $i$th projection map.

Now clearly  $\D$ is atomic, because it is a product of atomic algebras,
and its atoms are $(\pi_i(\beta): \beta\in \At(\C_i))$.  
Now  $\A$ embeds into $\Nr_{\alpha}\D$ via $J:a\mapsto (\pi_i(a) :i\in I)$. If $x\in \Nr_{\alpha}\D$,
then for each $i$, we have $\pi_i(x)\in \mathfrak{Nr}_{\alpha}\C_i$, and if $x$
is non--zero, then $\pi_i(x)\neq 0$. By atomicity of $\C_i$, there is an $\alpha$--ary tuple $y$, such that
$\{g\in W_i: g(k)=y_k\}\subseteq \pi_i(x)$. It follows that there is an atom
of $b\in \A$, such that  $x\cdot  J(b)\neq 0$, and so the embedding is atomic, hence complete.
We have shown that $\A\in \bold S_c{\sf Nr}_{\alpha}\CA_{\alpha+ \omega}\cap \bf At$
and we are done.  The second inclusion is straightforward, since  
$\bold S_c\Nr_{\alpha}(\CA_{\alpha+\omega}\cap {\bf At})\subseteq \bold S_c\Nr_{\alpha}\CA_{\alpha+\omega}.$

(2) follows from first item, together with lemma \ref{n}. 

The first part of (3) follows by observing that, for any countable ordinal, 
$\alpha$, the class $\sf CRCA_{\alpha}$ coincides with the class of atomic algebras in $\bold S_c\Nr_{\alpha}\CA_{\alpha+\omega}$ 
having countably many atoms. This can be proved like the second item in theorem \ref{ii}: Assume that $\A\in \bold S_c\Nr_{\alpha}\CA_{\alpha+\omega}$ 
is atomic having countably many atoms. Take $\B$ to be $\Tm\At\A$, that is, $\B$ is the term algebra; the subalgebra of $\A$ generated by the atoms. 
Then $\B$ is countable and atomic, and  it is easy to see that $\A\in \sf CRCA_{\alpha}\iff \B\in \sf CRCA_{\alpha}$, because 
$\A$ and $\B$ share 
the same completely representable atom structure. 
Take $X$ to be the non--principal type consisting of co--atoms in $\B$.
Then,  like in the second item of theorem \ref{ii}, $\B$ is completely representable by any $\sf Gws_{\alpha}$ omitting $X$, 
hence $\A\in \sf CRCA_{\alpha}$. 

It is known \cite[Proposition 3.8.1]{HHbook2} that for $\alpha\leq 2$, $\sf CRCA_{\alpha}=\RCA_{\alpha}\cap \bf At$
hence, from the first item, we get that $\RCA_{\alpha}\cap \bf At\subseteq \bold S_c\Nr_{\alpha}(\CA_{\alpha}\cap \bf At)\cap \bf At\subseteq
\bold S_c\Nr_{\alpha}\CA_{\alpha}\cap \bf At\subseteq  \RCA_{\alpha}\cap \bf At$. 
This takes care of the second and last required in  the third item.

Item (4): Strictness of the first inclusion for $2<\alpha<\omega$, follows from the example in the first item of theorem \ref{i}. The infinite dimensional case is proved in the last part of theorem 
\ref{ii} by lifting the construction in the finite dimensional case to the transfinite. 
The strictness of the second inclusion follows from the construction in \cite{SL}
which we briefly recall. 
Let $\alpha>1$. Let $V$ be the weak space $^{\alpha}\Q^{(\bold 0)}$, where $\bold 0$ $\alpha$-ary zero sequence and let 
${\A}\in {\sf Ws}_{\alpha}$ have  universe $\wp(V)$. Then 
$\A\in \Nr_{\alpha}{\sf CA}_{\alpha+\omega}$. To see why  let 
$W={}^{\alpha+\omega}\Q^{(\bold 0)}$, where $\bold 0$ is the $\alpha+\omega$--ary zero sequence
and let $\D=\wp(W)$.
Then the map $\theta: \A\to \wp(\D)$ defined via $a\mapsto \{s\in W: (s\upharpoonright \alpha)\in a\}$, 
is an injective homomorphism from $\A$ into $\Rd_{\alpha}\D$ that is onto 
$\Nr_{\alpha}\D$.
Let $y=\{s\in V: s_0+1=\sum_{i>0} s_i\}$
and for $s\in y$, let $y_s$ be the singleton containing $s$, i.e. $y_s=\{s\}.$
Let ${\B}=\Sg^{\A}\{y,y_s:s\in y\}.$ Then it is proved in \cite{SL} that $\B$ is atomic, in fact 
$\B$ contains every singleton $\{s\}$ with $s\in V$. 
Sharing the same atom structure (consisting of the singletons $\{s\}$, $s\in V$), $\B\subseteq_d \A$ and so $\B\subseteq_c \A$.
Thus $\B\in \bold S_c\Nr_{\alpha}\CA_{\alpha+\omega}\cap \bf At$ 
because $\A\in  \Nr_{\alpha}\CA_{\alpha+\omega}$ is atomic.  As proved in \cite{SL},  
$\B\notin {\bf  El}\Nr_{\alpha}{\sf CA}_{\alpha+1}(\supseteq \Nr_{\alpha}\CA_{\alpha+\omega}\cap \bf At)).$

For the strictness of the first inclusion in item (5) we use the 
$\CA_{\alpha}$ based on $\Z$ and $\N$ denoted by $\C$ in the first item of theorem \ref{can}.  In {\it op.cit}, 
it is shown that $\C\notin \bold S_c\Nr_{\alpha}\CA_{\alpha+3}(\supseteq \bold S_c\Nr_{\alpha}\CA_{\omega}$), so $\C$ is not completely representable by the first item.
In \cite{mlq}, it is proved that $\C\equiv \B$ for some countable $\B\in \sf CRCA_{\alpha}$, 
so $\C\in {\bf El}\sf CRCA_{\alpha}\sim \CRCA_{\alpha}$.

We prove the required equalities of the given classes for $\alpha<\omega$ in item (5). Define the class $\sf LCA_{\alpha}$ as follows: 
$\A\in \sf LCA_{\alpha}\iff \A$ is atomic and  \pe\ has a \ws\ in $G_k(\At\A)$ for all $k<\omega$. 
Then this class is elementary because a \ws\ for \pe\ in $G_k$ can be coded in  first order sentence $\rho_k$.
We show that all the given classes coincide with this class, getting the required. 
The class of atom structures of this class is studied in \cite{HHbook2} under the name of atom structures 
satisfying the `Lyndon conditions'. In the present context, 
working on the `atomic algebras level' the Lyndon conditions are just the first order sentences
$\{\rho_k: k\in \omega\}$.

Assume that $\A\in {\sf LCA}_\alpha$.
Take a countable elementary subalgebra $\C$ of $\A$.
Since $\sf LCA_{\alpha}$ is elementary, then $\C\in \sf LCA_{\alpha}$, so for 
$k<\omega$, \pe\ has a \ws\ $\rho_k$,  in $G_k(\At\C)$.
Let $\D$ be a non--principal ultrapower of $\C$.  Then \pe\ has a \ws\ $\sigma$ in $G_{\omega}(\At\D)$ --- essentially she uses
$\rho_k$ in the $k$'th component of the ultraproduct so that at each
round of $G_{\omega}(\At\D)$,  \pe\ is still winning in co--finitely many
components, this suffices to show she has still not lost. Now one can use an
elementary chain argument to construct countable elementary
subalgebras $\C=\A_0\preceq\A_1\preceq\ldots\preceq\ldots \D$ in the following way.
One defines  $\A_{i+1}$ to be a countable elementary subalgebra of $\D$
containing $\A_i$ and all elements of $\D$ that $\sigma$ selects
in a play of $G_{\omega}(\At\D)$ in which \pa\ only chooses elements from
$\A_i$. Now let $\B=\bigcup_{i<\omega}\A_i$.  This is a
countable elementary subalgebra of $\D$, hence necessarily atomic,  and \pe\ has a \ws\ in
$G_{\omega}(\At\B)$, so $\B$ is completely representable.
Thus $\A\equiv \C\equiv \B$, hence $\A\in {\bf El}{\sf CRCA}_\alpha$. We have shown that ${\sf LCA}_{\alpha}\subseteq {\bf El}\sf CRCA_{\alpha}.$

Now if $\A\in \bold S_c{\sf Nr}_\alpha\CA_{\omega}\cap {\bf At}$, then by lemma \ref{n}, 
\pe\ has a \ws\ in $\bold G^{\omega}(\At\A)$, hence in $G_{\omega}(\At\A)$, {\it a fortiori}, in  $G_k(\At\A)$ for all $k<\omega$, 
so, by definition,  $\A\in \sf LCA_{\alpha}$.
Since ${\sf LCA}_\alpha$ is elementary,  we get that ${\bf El}(\bold S_c{\sf Nr}_\alpha\CA_{\omega}\cap {\bf At})\subseteq {\sf LCA}_\alpha$.
But ${\sf CRCA}_\alpha\subseteq \bold S_c{\sf Nr}_\alpha\CA_{\omega}\cap \bf At$,  hence 
${\sf LCA}_\alpha={\bf El}{\sf CRCA}_\alpha\subseteq {\bf El}(\bold S_c{\sf Nr}_\alpha\CA_{\omega}\cap \bf At)\subseteq {\sf LCA}_{\alpha}$. 
Now $\bold S_c\Nr_{\alpha}\CA_{\omega}\cap \bf At\subseteq {\bf El}\bold S_c\Nr_{\alpha}\CA_{\omega}\cap \bf At$,
and the latter class is elementary (if $\bold K$ is elementary, then $\bold K\cap \bf At$ is elementary because atomicity is a first order property), 
so ${\bf El}(\bold S_c\Nr_{\alpha}\CA_{\omega}\cap \bf At)\subseteq {\bf El}\bold S_c\Nr_{\alpha}\CA_{\omega}\cap \bf At.$
Conversely, if $\C$ is in the last class, then $\C$ is atomic and $\C\equiv \D$, for some $\D\in \bold S_c\Nr_{\alpha}\CA_{\omega}$.
Hence $\D$ is atomic, so $\D\in  \bold S_c\Nr_{\alpha}\CA_{\omega}\cap \bf At$, 
thus $\C\in  {\bf El}(\bold S_c\Nr_{\alpha}\CA_{\omega}\cap \bf At)$.

We have shown that 
${\bf El}\bold S_c\Nr_{\alpha}\CA_{\omega}\cap \bf At={\bf El}(\bold S_c{\sf Nr}_\alpha\CA_{\omega}\cap \bf At)={\sf LCA}_{\alpha}={\bf El}\sf CRCA_{\alpha}$. 
Finally, by lemma \ref{n}, $\bold S_c\Nr_{\alpha}(\CA_{\omega}\cap {\bf At})\subseteq {\sf LCA}_{\alpha}$, so
${\bf El}\bold S_c\Nr_{\alpha}(\CA_{\omega}\cap {\bf At})\subseteq {\sf LCA}_{\alpha}$. The other inclusion follows from 
$\sf CRCA_{\alpha}\subseteq \bold S_c\Nr_{\alpha}(\CA_{\omega}\cap \bf At)$, so 
$\sf LCA_{\alpha}={\bf El}\CRCA_{\alpha}\subseteq {\bf El}\bold S_c\Nr_{\alpha}(\CA_{\omega}\cap \bf At)$.   
We have shown that all classes coincide with ${\sf LCA}_{\alpha}$, 
which is the elementary closure of $\sf CRCA_{\alpha}$, and we are done.

Let $2<\alpha<\omega$. We prove the last required, namely, the non--finite axiomatizability of ${\bf El}\sf CRCA_{\alpha}(=\sf LCA_{\alpha})$.
For each $2<\alpha\leq l<\omega$, 
let $\R_l$ be the finite Maddux algebra $\mathfrak{E}_{f(l)}(2, 3)$ with strong $l$--blur
$(J_l,E_l)$ and $f(l)\geq l$ as specified in the proof of the first item of theorem \ref{main}.  
Let ${\cal R}_l={\Bb}(\R_l, J_l, E_l)\in \sf RRA$ and let 
$\A_l=\Nr_\alpha{\Bb}_l(\R_l, J_l, E_l)\in \RCA_\alpha$. 
Then  the sequence $(\At\A_l: l\in \omega\setminus \alpha)$ is a a sequence of weakly representable atom structures 
that are not strongly representable with a completely representable 
ultraproduct.  This sequence witnesses non--finite
axiomatizability of ${\sf LCA}_\alpha$,  because 
$\Cm\At\A_l\notin \RCA_\alpha(\supseteq {\sf LCA}_\alpha)$, but 
$\Pi_{l\in \omega\setminus \alpha}\Cm\At\A_l/F\in {\sf CRCA}_\alpha\subseteq {\sf LCA}_\alpha$, 
since $\Pi_{l\in \omega\setminus \alpha}\Cm\At\A_l/F\cong \Cm[\Pi_{l\in \omega\setminus \alpha}\At\A_l/F]$,
and $\Pi_{l\in \omega\setminus \alpha}\At\A_l/F$  
is a completely representable atom structure. 
\end{proof}

\subsection{Characterizing 
complete representations via atomic dilations}

In the first item of theorem \ref{i}, 
we showed that, for $2<n<\omega$, there is an atomic $\A\in {\sf Nr}_n\CA_{\omega}$ with uncountably many atoms such that $\A$ is not completely representable 
witnessing the strictness of the inclusion in item (4) of theorem \ref{iii}.
But the $\C\in \CA_{\omega}$ for which $\A=\Nr_n\C$ is atomless. 
Indeed, using the notation in {\it op.cit}, for any $N\in X$, we can add an extra node 
extending
$N$ to $M$ such that $\emptyset\subset M'\subset N'$, so that $N'$ cannot be an atom.
So can $\C$ be atomic?  We call $\C$ an {\it $\omega$--dilation of $\A$.} 

In what follows we adress complete representability of a given algebra 
in connection to the existence of an $\omega$--dilation of this algebra  that is atomic. We shall deal with many classes of cylindric--like algebras for which the neat reduct 
operator can be defined. In particular, for such classes, and regardless of atomicity, we can (and will) 
talk about an $\omega$--dilation of a given algebra.

For an ordinal $\alpha$, let $\PA_{\alpha}(\PEA_{\alpha})$ denote the class of $\alpha$--dimensional polyadic (equality) algebas as defined in \cite[Definition 5.4.1]{HMT2}. 
For $\alpha\geq \omega$, we let ${\sf CPA}_{\alpha}$ (${\sf CPEA}_{\alpha}$) denote the reduct of $\PA_{\alpha}$($\PEA_{\alpha}$) whose signature is 
obained from that of $\PA_{\alpha}$ ($\PEA_{\alpha}$) by discarding all infinitary cylindrifiers, and its axiomatization is that of 
$\PA_{\alpha}$($\PEA_{\alpha})$ restricted to the new signature.
$\QA_{\alpha}$ ($\QEA_{\alpha}$) denotes the class of quasi--polyadic (equality) algebras obtained by restricting the signature and axiomatization of $\PA_{\alpha}(\PEA_{\alpha}$) to
only finite substitutions and cylindrifiers.  So here the signature does not contain {\it infinitary} substitutions, the $\s_{\tau}$s are defined only for 
those maps $\tau:\alpha\to \alpha$ that move 
only finitely many points.   With cylindrifiers defined only on finitely many indices, the neat reduct operator $\Nr$ for $\QA_{\alpha}$, $\QEA_{\alpha}$, $\sf CPA_{\alpha}$ and 
$\sf CPEA_{\alpha}$ is defined analogous 
to the $\CA$ case.

We recall from \cite[Definition~5.4.16]{HMT2},  the notion of neat reducts of polyadic algebras. We shall be dealing with infinite dimensional such
algebras. Because infinite cylindrfication is allowed,  the definition of neat reducts is different from the $\CA$ case. 
We define the neat reduct operator 
only $\PA$s; the $\PEA$ case is entirely analgous considering diagonal elements.
 \begin{definition} Let $J\subseteq \beta$ and
$\A=\langle A,+,\cdot ,-, 0, 1,{\sf c}_{(\Gamma)}, {\sf s}_{\tau} \rangle_{\Gamma\subseteq \beta ,\tau\in {}^{\beta}\beta}$
be a $\PA_{\beta}$.
Let $Nr_J\B=\{a\in A: {\sf c}_{(\beta\sim J)}a=a\}$. Then
$${\bf Nr}_J\B=\langle Nr_{J}\B, +, \cdot, -, {\sf c}_{(\Gamma)}, {\sf s}'_{\tau}\rangle_{\Gamma\subseteq J,  \tau\in {}^{\alpha}\alpha}$$
where ${\sf s}'_{\tau}={\sf s}_{\bar{\tau}}.$ Here $\bar{\tau}=\tau\cup Id_{\beta\sim \alpha}$.
The structure ${\bf Nr}_J\B$ is an algebra, called the {\it $J$--compression} of $\B$.
When $J=\alpha$, $\alpha$ an ordinal, then ${\bf Nr}_{\alpha}\B\in \PA_{\alpha}$ and it is
called the neat $\alpha$ reduct of $\B$. 
\end{definition}
Assume that $\B\in \PEA_{\beta}$ for some infinite ordinal $\beta$. Then for $n<\omega$, ${\bf Nr}_n\B\subseteq \Nr_n\Rd_{qea}\B$, where 
$\Rd_{qea}$ denotes the quasi--polyadic reduct of $\B$, obtained by discarding infinitary
substitutions and  the definition of the neat reduct opeartor $\Nr_n$ here is like the $\CA$ case not involving infinitary cylindrifiers.
Indeed, if $x\in {\bf Nr}_n\B$, then ${\sf c}_{(\beta\setminus n)}x=x$, so for any $i\in \beta\setminus n$,
${\sf c}_ix\leq {\sf c}_{(\beta\setminus n)}x=x\leq {\sf c}_ix$,
hence ${\sf c}_ix=x$.   However, the converse might not be true. If $x\in \Nr_n\Rd_{qea}\B$, then ${\sf c}_ix=x$ for all $i\in \beta\setminus n$, but 
this does not imply that ${\sf c}_{(\beta\setminus n)}x=x;$ it can happen that ${\sf c}_{(\beta\setminus n)}x>x={\sf c}_ix$ (for all $i\in \beta\setminus n$).

We will show in a moment that if 
$\C\in {\sf PEA}_{\omega}$ is atomic and $n<\omega$, then both $\Nr_n\Rd_{qea}\C$ and ${\bf Nr}_n\C$ are 
completely representable $\PEA_n$s.  
This gives a plethora of completely representable ${\sf PEA}_n$s whose $\CA$ reducts are (of course) 
also completely representable. 
We present analogous positive results typically of the form:  If $\bold K$ is any of the classes defined above 
(like $\CPEA, CPA, QEA, QA$), $\D\in \bold K_{\omega}$ is atomic, $n<\omega$, 
then (under certain conditions on $\D$)  $\Nr_n\D$ is completely representable. The `certain conditions' will be 
formulated only for the dilation $\D$ and will not depend on $n$. For example for 
$\PEA$, mere atomicity  of $\D$ will suffice, 
for $\PA$ we will need complete additivity of $\D$ too. 

We need a crucial lemma. But first a definition:
\begin{definition} A {\it transformation system} is a quadruple of the form $(\A, I, G, {\sf S})$ where $\A$ is an 
algebra of any signature, 
$I$ is a non--empty set (we will only be concerned with infinite sets),
$G$ is a subsemigroup of $(^II,\circ)$ (the operation $\circ$ denotes composition of maps) 
and ${\sf S}$ is a homomorphism from $G$ to the semigroup of endomorphisms of $\A$. 
Elements of $G$ are called transformations. 
\end{definition}

The next lemma says that, roughly, in the presence of all substitution operators in the infinite dimensional case, one can 
form dilations in any higher dimension.
\begin{lemma}\label{dilation} Let $\alpha$ be an infinite ordinal and $\bold K\in \{\PA, \PEA\}$. Let $\D\in \bold K_{\alpha}.$ 
Then for any ordinal $\mathfrak{n}>\alpha$,  there exists $\B\in \bold K_{\mathfrak{n}}$ 
such that $\D={\bf Nr}_{\alpha}\B$. 
Furthermore, if $\D$ is atomic (complete), then $\B$ can be chosen to be atomic (complete). An entirely analogous result holds for 
$\sf CPA$ and $\sf CPEA$ replacing the operator ${\bf Nr}$ by the neat reduct operator $\Nr$.
\end{lemma}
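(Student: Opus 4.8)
The plan is to realize $\B$ as the \emph{minimal (free) dilation} of $\D$, exploiting that in $\bold K\in\{\PA,\PEA\}$ \emph{every} substitution operator ${\sf s}_{\tau}$ is available. First I would record $\D$ as the transformation system $(\D,\alpha,{}^{\alpha}\alpha,{\sf S})$ of the preceding definition, with ${\sf S}(\tau)={\sf s}_{\tau}^{\D}$, and view $\alpha$ as an initial segment of $\mathfrak{n}$. The whole construction rests on the observation that, since $\alpha\neq\emptyset$, for every ordinal $\mathfrak{n}>\alpha$ there are many \emph{retractions} $\eta\colon\mathfrak{n}\to\alpha$ with $\eta\restriction\alpha=\mathrm{Id}_{\alpha}$; such an $\eta$ pulls any transformation of $\mathfrak{n}$ back to a transformation of $\alpha$, and so reduces any computation in the sought dilation to one already available inside $\D$. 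This is precisely where the presence of all substitutions is essential.

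Second I would build $\B$ and its operations. Concretely, take $\B$ to be generated over (a copy of) $\D$ by closing under the Boolean operations, the cylindrifiers ${\sf c}_{(\Gamma)}$ for $\Gamma\subseteq\mathfrak{n}$, and the substitutions ${\sf s}_{\sigma}$ for $\sigma\in{}^{\mathfrak{n}}\mathfrak{n}$ (for $\PEA$ one also adjoins the diagonals ${\sf d}_{ij}$, $i,j<\mathfrak{n}$). The value of each operation is dictated by the polyadic axioms together with the retraction device: an element ${\sf s}_{\sigma}d$ is evaluated by cylindrifying away the coordinates outside $\alpha$ and then applying the $\alpha$-substitution obtained by composing $\sigma$ with a suitable retraction, so that the outcome lands in $\D$. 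The routine but lengthy part is to check that all $\PA_{\mathfrak{n}}$ (resp.\ $\PEA_{\mathfrak{n}}$) axioms hold for $\B$; each identity reduces, via the retractions, to the corresponding identity in $\D$. Having done this one verifies the compression identity ${\bf Nr}_{\alpha}\B=\D$: the inclusion $\D\subseteq{\bf Nr}_{\alpha}\B$ is immediate, while any $b$ with ${\sf c}_{(\mathfrak{n}\setminus\alpha)}b=b$ has all its support inside $\alpha$ and hence, by minimality of the dilation, already lies in $\D$.

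Finally I would transfer atomicity and completeness. The key structural fact, again a consequence of having all substitutions, is a normal form: every element of the minimal dilation is a finite Boolean combination of substitution images ${\sf s}_{\sigma}d$ with $d\in D$. When $\sigma$ is injective on the support of $d$, the operator ${\sf s}_{\sigma}$ acts as a Boolean isomorphism onto its image and therefore preserves atoms and arbitrary suprema. From this one shows that below any nonzero element of $\B$ sits the ${\sf s}_{\sigma}$-image of an atom of $\D$, so that $\B$ is atomic (resp.\ complete) whenever $\D$ is. The variants for ${\sf CPA}$ and ${\sf CPEA}$ run through the identical construction after discarding the infinitary cylindrifiers; since only single cylindrifiers ${\sf c}_{i}$, $i\in\mathfrak{n}\setminus\alpha$, survive, one takes the neat reduct $\Nr_{\alpha}\B$ in place of ${\bf Nr}_{\alpha}\B$, and the same support argument yields $\Nr_{\alpha}\B=\D$. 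I expect the genuine obstacle to be exactly this last transfer: pinning down the normal form precisely and verifying the atom-preservation is what truly uses the polyadic signature, and is the very point that fails for cylindric algebras, where a neat reduct of an atomless algebra can be atomic.
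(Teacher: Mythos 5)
Your construction follows the same general route as the paper's proof (the Daigneault--Monk functional dilation: your retractions $\eta$ play the role of the permutations $\rho$ in the paper's definition of the cylindrifiers, and your verification that ${\bf Nr}_{\alpha}\B=\D$, resp.\ $\Nr_{\alpha}\B=\D$ for $\sf CPA/CPEA$, is the same support argument). The genuine gap is in the final step, the transfer of atomicity and, above all, of completeness. You take $\B$ to be the \emph{generated} (minimal) dilation, whose elements are finite Boolean combinations of substitution images ${\sf s}_{\sigma}d$, $d\in D$, and you argue that below every nonzero element of $\B$ sits an ${\sf s}_{\sigma}$-image of an atom of $\D$. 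Even granting this, it is an argument for atomicity only; it says nothing about the existence of arbitrary suprema \emph{inside} $\B$. A generated subalgebra is essentially never complete when infinite: an infinite family $\{{\sf s}_{\sigma_i}d_i : i\in I\}$ with unrelated $\sigma_i$ has no reason to possess a least upper bound among finite Boolean combinations of substitution images, and completeness is not inherited by subalgebras of a complete algebra. So the parenthetical ``(resp.\ complete)'' is unsupported, and that is precisely the half of the ``furthermore'' clause that the later applications (theorems \ref{pa}, \ref{paa}, \ref{paaa}, \ref{two}) actually use, since there one must have complete and atomic dilations to extract the principal ultrafilters and run the Stone-space argument.

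There is a secondary gap in the atomicity claim itself: that ${\sf s}_{\sigma}$ is a Boolean isomorphism onto its image shows only that ${\sf s}_{\sigma}a$ is an atom \emph{of the image} ${\sf s}_{\sigma}[\D]$; to conclude that it is an atom of $\B$ you must exclude smaller nonzero elements of $\B$ manufactured from \emph{other} generators ${\sf s}_{\tau}d'$, which your normal form (which, incidentally, is weaker than the one in Daigneault--Monk, where every element of the dilation is a \emph{single} ${\sf s}_{\sigma}p$ with $\sigma\upharpoonright\alpha$ injective) does not do. The paper sidesteps both problems at once: it arranges the construction so that the Boolean reduct of $\B$ is isomorphic to the full function space $F({}^{\mathfrak{n}}\alpha,\D)$, i.e.\ to a direct product of copies of $\D$ indexed by ${}^{\mathfrak{n}}\alpha$ with pointwise Boolean operations, and then atomicity and completeness of $\B$ are immediate because both properties are preserved under products of Boolean algebras; no atom- or supremum-preservation property of individual substitutions is invoked anywhere. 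To repair your argument you would either have to prove that your minimal dilation's Boolean reduct is such a product (it is not, in general), or abandon the generated algebra and build $\B$ on the function space as the paper does.
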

\begin{proof} 
Let $\bold K\in \{\sf PA, PEA, CPA, CPEA\}.$ Assume that $\D\in \bold K_{\alpha}$ and that $\mathfrak{n}>\alpha$. 
If $|\alpha|=|\mathfrak{n}|$, then one fixes a bijection $\rho:\mathfrak{n}\to \alpha$, and defines the $\mathfrak{n}$-dimensional dilation of the diagonal free
reduct of $\D$, having the same universe as $\D$, by re-shuffling the operations of $\D$ along $\rho$ \cite{DM}. 
Then one defines diagonal elements in the $\mathfrak{n}$-dimensional dilation of the diagonal free reduct of $\D$, 
by using the diagonal elements in  $\D$ \cite[Theorem 5.4.17]{HMT2}.

Now assume that $|\mathfrak{n}|>|\alpha|$.  
Let ${\sf End}(\D)$ be the semigroup of Boolean endomorphisms on
$\D$.  Then the map $\sf S:{}^\alpha\alpha\to {\sf End}(\A)$ defined  via $\tau\mapsto {\sf s}_{\tau}$ is a homomorphism of semigroups.
The operation on both semigroups is composition of maps, so that $(\D, \alpha, {}^{\alpha}\alpha, \sf S)$ is a transformation system. 
For any set $X$, let $F(^{\alpha}X,\A)$
be the set of all maps from $^{\alpha}X$ to $\A$ endowed with Boolean  operations defined pointwise and for
$\tau\in {}^\alpha\alpha$ and $f\in F(^{\alpha}X, \A)$, put ${\sf s}_{\tau}f(x)=f(x\circ \tau)$.

This turns $F(^{\alpha}X,\A)$ to a transformation system as well.
The map $H:\A\to F(^{\alpha}\alpha, \A)$ defined by $H(p)(x)={\sf s}_xp$ is
easily checked to be an embedding of transfomation systems. Assume that $\beta\supseteq \alpha$. Then $K:F(^{\alpha}\alpha, \A)\to F(^{\beta}\alpha, \A)$
defined by $K(f)x=f(x\upharpoonright \alpha)$ is an embedding, too.
These facts are fairly straightforward to establish
\cite[Theorems 3.1, 3.2]{DM}.
Call $F(^{\beta}\alpha, \D)$ a minimal functional dilation of $F(^{\alpha}\alpha, \D)$.
Elements of the big algebra, or the (cylindrifier free)
functional dilation, are of form ${\sf s}_{\sigma}p$,
$p\in F(^{\alpha}\alpha, \D)$ where $\sigma\upharpoonright \alpha$ is injective \cite[Theorems 4.3-4.4]{DM}.
Let $\B^{-c, -d}=F({}^{\mathfrak{n}}\alpha, \D).$

For the $\PA$ case one defines cylindrifiers on $\B^{-c, -d}$ by setting for each $\Gamma\subseteq \mathfrak{n}:$
$${\sf c}_{(\Gamma)}{\sf s}_{\sigma}^{(\B^{-c. -d})}p={\sf s}_{\rho^{-1}}^{(\B^{-c, -d})} {\sf c}_{\rho(\{(\Gamma)\}\cap \sigma \alpha)}^{\D}{\sf s}_{(\rho\sigma\upharpoonright \alpha)}^{\D}p.$$
For the cases $\sf CPA$ case, one defines cylindrifiers on $\B^{-c, -d}$ by restricting $\Gamma$ to singletons, setting for each  $i\in  \mathfrak{n}:$
$${\sf c}_{i}{\sf s}_{\sigma}^{(\B^{-c, -d})}p={\sf s}_{\rho^{-1}}^{(\B^{-c. -d})} {\sf c}_{(\rho(i)\cap \sigma(\alpha))}^{\D}
{\sf s}_{(\rho\sigma\upharpoonright \alpha)}^{\D}p.$$
In both cases $\rho$ is any permutation such that $\rho\circ \sigma(\alpha)\subseteq \sigma(\alpha).$
The definition is sound, that is, it is independent of $\rho, \sigma, p$; furthermore, it agrees with the old cylindrifiers in $\D$.
Denote the resulting algebra by $\B^{-d}$.

When $\D\in \PA_{\alpha}$, identifying algebras with their transformation systems
we get that $\D\cong {\bf Nr}_{\alpha}\B^{-d}$, via the isomorphism $H$ defined
for $f\in \D$ and $x\in {}^{\mathfrak{n}}\alpha$ by,
$H(f)x=f(y)$ where $y\in {}^{\alpha}\alpha$ and $x\upharpoonright \alpha=y$,
\cite[Theorem 3.10]{DM}. In \cite[Theorems 4.3. 4.4]{DM} 
it is shown that $H(\D)={\bf Nr}_{\alpha}\B^{-d}$ where $\B^{-d}=\{\s_{\sigma}^{(\B^{-d})}p: p\in \D: \sigma\upharpoonright \alpha \text { is injective}\}.$
 
When $\D\in {\sf CPA}_{\alpha}$, identifying $\D$ with $H(\D)$, where $H$ is defined like in the $\PA$ case, 
we get that $\D\subseteq \Nr_{\alpha}\B^{-d}$ with $\B^{-d}=\{\s_{\sigma}^{(\B^{-d})}p: p\in \D: \sigma\upharpoonright \alpha \text { is injective}\}.$
We show that $\Nr_{\alpha}\B^{-d}\subseteq \D$. 
Let $x\in \Nr_{\alpha}\B^{-d}$. Then there exist $y\in \D$ and $\sigma:\beta\to \beta$ with $\sigma\upharpoonright \alpha$ injective, such that 
$x=\s_{\sigma}^{(\B^{-d})}y$. Choose any $\tau:\beta\to \beta$ such that 
$\tau(i)=i$ for all $i\in \alpha$ and $(\tau\circ \sigma)(i)\in \alpha$ for all $i\in \alpha$. Such a $\tau$ clearly exists. Since $\Delta x\subseteq \alpha$, 
and $\tau$ fixes $\alpha$ pointwise, we have $\s_{\tau}^{(\B^{-d})}x=x$.
Then $x=\s_{\tau}^{(\B^{-d})}x=\s_{\tau}^{(\B^{-d})}\s_{\sigma}^{(\B^{-d})}y=\s_{\tau\circ \sigma}^{(\B^{-d})}y= \s_{\tau\circ \sigma\upharpoonright \alpha}^{\D}y\in \D$.

Having at hand $\B^{-d}$, for all 
$i<j<\mathfrak{n}$,  the diagonal element ${\sf d}_{ij}$ (in $\B^{-d}$) can be defined, using
the diagonal elements in $\D$, as in
\cite[Theorem 5.4.17]{HMT2}, obtaining the expanded required structure $\B$.  

In all cases the expanded structure $\B$ has Boolean reduct isomorphic to $F({}^\mathfrak{n}\alpha, \D)$. 
In particular, $\B$ is atomic (complete) if 
$\D$ is atomic (complete), 
because a product of an atomic (complete) Boolean algebras is atomic (complete).
\end{proof}
The proof of the following lemma follows from the definitions.
\begin{lemma}\label{join2} If $\A$, $\B$ and $\C$ are Boolean algebras, such that 
$\A\subseteq \B\subseteq \C$, $\B\subseteq_c \C$ and $\A\subseteq_c \C$, 
then $\A\subseteq_c \B$. 
\end{lemma}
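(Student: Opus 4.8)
The plan is to unwind the definition of $\subseteq_c$ and verify directly that every supremum existing in $\A$ is inherited by $\B$. Recall that $\A\subseteq_c\C$ means: whenever $X\subseteq A$ and $\sum^{\A}X$ exists in $\A$, then $\sum^{\C}X$ exists and equals $\sum^{\A}X$; similarly for $\B\subseteq_c\C$. So I fix $X\subseteq A$ for which $a:=\sum^{\A}X$ exists, and I must show that $\sum^{\B}X$ exists and equals $a$, which is precisely the assertion $\A\subseteq_c\B$.

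First I would record the one structural fact that makes the three algebras cohere: since $\A\subseteq\B\subseteq\C$ are Boolean subalgebras, the partial order $\leq$ is absolute along the chain, because $x\leq y\iff x\cdot y=x$ and the meet $\cdot$ is computed identically in any subalgebra. In particular, for elements lying in any of the three algebras the predicate ``is an upper bound of $X$'' does not depend on which algebra we read it in, and $a\in A\subseteq B$. Next I would apply $\A\subseteq_c\C$ to obtain $a=\sum^{\C}X$, i.e.\ $a$ is the least upper bound of $X$ in $\C$. Now the claim is $a=\sum^{\B}X$: by absoluteness of the order $a$ is an upper bound of $X$ in $\B$, and conversely if $b\in B$ is any $\B$-upper bound of $X$ then $b$ is a $\C$-upper bound, so $a\leq b$ in $\C$ (as $a$ is the supremum there), whence $a\leq b$ in $\B$ as well. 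Thus $a$ is the least upper bound of $X$ in $\B$, so $\sum^{\B}X$ exists and equals $a=\sum^{\A}X$, giving $\A\subseteq_c\B$.

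I do not expect any real obstacle: the statement is essentially a diagram chase, and the only point requiring care is the absoluteness of $\leq$ across the chain of subalgebras, which is immediate for Boolean algebras. It is worth remarking that this transfer uses only $\A\subseteq_c\C$ together with the inclusions $\A\subseteq\B\subseteq\C$; the hypothesis $\B\subseteq_c\C$ is not actually needed to push existing suprema of $\A$ down into $\B$. Finally, since complementation is preserved in subalgebras, the dual statement for products follows automatically by De Morgan duality, so no separate argument for infima is required.
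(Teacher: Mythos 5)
Your proof is correct, and it is the same argument the paper has in mind: the paper offers no details, saying only that the lemma ``follows from the definitions,'' and your direct verification (absoluteness of $\leq$ along the chain, then comparing upper bounds in $\B$ with the least upper bound in $\C$ supplied by $\A\subseteq_c\C$) is precisely that unwinding. Your side remark is also accurate: the hypothesis $\B\subseteq_c\C$ is never used, so the lemma holds already from $\A\subseteq\B\subseteq\C$ and $\A\subseteq_c\C$.
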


For simplicity of notation, if $\beta\geq \omega$, $\B\in \PA_{\beta}(\PEA_{\beta})$, and $n<\omega$,  then we write $\Nr_n\B$ 
for $\Nr_n\Rd_{qa}\B$ $(\Nr_n\Rd_{qea}\B)$, where $\Rd_{qa}$ denotes `quasi-polyadic reduct'.

{\it In this section we understand complete representability for $\alpha$--dimensional algebras, $\alpha$ any ordinal,
in the classical sense with respect to generalized cartesian $\alpha$--dimensional spaces.}

It is shown in in \cite{au}, that for any infinite ordinal $\alpha$, if  $\A\in \PA_{\alpha}$ is atomic and completely additive, then it is completely representable. 
From this it can be concluded that for any $n<\omega$, 
any complete subalgebra of $\Nr_n\D$ is completely representable using lemma \ref{join}, 
because $\Nr_n\D\subseteq_c \D$ (as can be easily distilled from the next proof).

The result in \cite{au} holds for $\sf CPA$'s, cf. theorem \ref{paaa}, but it does not hold for $\PEA_{\omega}$s and $\sf CPEA_{\omega}$s.
It is not hard to construct atomic algebras in the last two classes that are not even representable, let alone completely representable. 
But for such (non--representable) algebras the $n$--neat reduct, for any $n<\omega$, 
will be completely representable 
as proved next (in theorems \ref{pa} and \ref{paa}):
\begin{theorem}\label{pa} If $2<n<\omega$ and $\D\in \PEA_{\omega}$ is atomic, then
any complete subalgebra of $\Nr_n\D$ is completely representable. In particular, 
${\bf Nr}_n\D$ is completely representable. 
\end{theorem}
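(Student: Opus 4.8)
The plan is to reduce the statement to the single assertion that $\Nr_n\D$ is itself completely representable, and then to build that representation. First, for finite $n$ the neat reduct is a complete subalgebra of its dilation, $\Nr_n\D\subseteq_c\D$ (the fact alluded to just before theorem \ref{paaa}, which comes out of the complete--additivity computations there); since $\D$ is atomic, Lemma \ref{join} then makes $\Nr_n\D$ atomic. Granting for the moment that $\Nr_n\D$ is completely representable, any complete subalgebra $\B\subseteq_c\Nr_n\D$ is completely representable by the second part of Lemma \ref{join}, which as remarked there holds verbatim for $\PEA_n$. The \emph{in particular} clause I would settle by checking ${\bf Nr}_n\D\subseteq_c\D$ directly (an element killed by ${\sf c}_{(\omega\setminus n)}$ stays closed under the suprema that $\D$ computes), whence ${\bf Nr}_n\D\subseteq_c\Nr_n\D$ by Lemma \ref{join2} applied to ${\bf Nr}_n\D\subseteq\Nr_n\D\subseteq\D$, so that ${\bf Nr}_n\D$ falls under the complete--subalgebra case. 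Everything therefore hinges on completely representing $\Nr_n\D$.

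For the main construction I would build a complete representation by saturating a system of $n$--dimensional atomic networks, drawing witnesses from the ambient $\omega$--dimensional $\D$. Since $\Nr_n\D\in\Nr_n\QEA_\omega$ is atomic, every finite cylindrifier demand ``$N(\bar x)\le{\sf c}_i b$'' can be met: the infinitely many spare coordinates of $\D$ supply a fresh node labelled by an atom below $b$, so the step-by-step extension never stalls. The essential use of the polyadic hypothesis enters at the amalgamation step: the presence of \emph{all} finite substitutions $\s_\tau$ (not merely of ${\sf c}_i$ and ${\sf d}_{ij}$) furnishes the homogeneity that lets two networks agreeing off a single coordinate be fused, and it pins each tuple of the limit down to a principal type, so no non--principal points are created. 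Passing to the union over rounds yields, for each atom $a$, a representation $f_a$ realizing $a$ in which every point of the base is covered by $f_a(\alpha)$ for some atom $\alpha$, i.e. an atomic --- hence complete --- representation; assembling these into a single ${\sf Gs}_n$ by disjoint union (the standard device when $\At\Nr_n\D$ may be uncountable) produces $f$ with $\bigcup_{\alpha\in\At\Nr_n\D}f(\alpha)=1$, which is exactly complete representability.

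The main obstacle I anticipate is guaranteeing this completeness, i.e. sum--preservation, in the presence of possibly uncountably many atoms. One cannot fall back on the game/elementary--chain or ultrapower machinery behind theorem \ref{iii}(2)--(3), since those characterize complete representability only for algebras with countably many atoms; indeed theorem \ref{iii}(4) exhibits an atomic algebra in $\Nr_n\CA_{n+\omega}$ that is \emph{not} completely representable, so mere neat--reducibility into $\omega$ dimensions is insufficient. The feature that rescues the $\PEA$ case, and must be used essentially, is precisely the full finite--substitution structure: it is what forces each saturated network to close up into a point--principal representation and what makes the blocks glue without manufacturing non--principal points. I would accordingly devote the bulk of the proof to verifying, from the $\PEA_\omega$ axioms together with atomicity of $\D$, the two substitution--driven claims above --- witnessability of every cylindrifier defect by an atom of $\D$, and availability of the amalgamation step --- since it is exactly here, and not in the diagonal--free or merely additive $\CA$ setting, that the argument carries content.
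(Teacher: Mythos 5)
Your opening reduction is fine and agrees with the paper: $\Nr_n\D\subseteq_c\D$ gives atomicity of $\Nr_n\D$ via Lemma \ref{join}, complete subalgebras inherit complete representability by the same lemma, and ${\bf Nr}_n\D\subseteq_c\D$ plus Lemma \ref{join2} disposes of the last clause. The gap is in the core construction. A step-by-step saturation of atomic networks, followed by ``the union over rounds'', can only produce a representation on a countable base, and when $\At(\Nr_n\D)$ is uncountable no complete representation can live on a countable base: if a tuple $\bar{s}$ carries the atom $\alpha$ and $\alpha\leq {\sf c}_ib$ for uncountably many atoms $b$, each such $b$ forces a distinct witness $\bar{t}_b\equiv_i\bar{s}$ (the sets $f(b)$ are pairwise disjoint), so the base is uncountable. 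Hence the defect sets to be met are themselves uncountable and cannot be scheduled in $\omega$ rounds; your proposed rescue (homogeneity and amalgamation from substitutions) addresses coherence of the gluing, not the cardinality of the requirements. This is exactly the phenomenon the paper records: the game/saturation machinery characterizes complete representability only for algebras with countably many atoms (Theorem \ref{iii}, item (3)), and atomic algebras in $\Nr_n\CA_{\omega}$ with uncountably many atoms can fail to be completely representable (Theorem \ref{i}, first item), so atomicity plus witnesses from spare dimensions cannot by themselves close the argument. The device your outline is missing is the one the paper's proof turns on: use Lemma \ref{dilation} to realize $\D=\Nr_{\omega}\B$ for an \emph{atomic} $\B\in\PEA_{\beta}$ with $\beta$ large, establish the chain $\A\subseteq_c\D\subseteq_c\B$, write down the (possibly uncountably many) joins (*) ${\sf c}_{(\Gamma)}p=\sum^{\B}\{{\sf s}_{\tau^+}p:\tau\upharpoonright\omega\setminus\Gamma=Id\}$ and (**) $\sum{\sf s}_{\tau^+}^{\B}\At\A=1$, and then take a \emph{principal} ultrafilter $F$ of $\B$ generated by an atom below $a$. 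Being an isolated point of the Stone space, $F$ lies outside \emph{every} nowhere dense set, so it preserves all these joins with no counting whatsoever, and the representation is read off directly as $x\mapsto\{\bar{t}\in{}^n(\beta/E):{\sf s}_{t\cup Id}^{\B}x\in F\}$, where $E$ is the equivalence relation induced by the diagonal elements in $F$.

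A second, related problem is that you locate the polyadic hypothesis in the wrong place: you invoke only ``all finite substitutions'', but those are already available in $\QEA_{\omega}$, and for atomic $\QEA_{\omega}$ dilations the paper does \emph{not} obtain the result --- Theorem \ref{suf} needs the extra hypothesis ${\sf c}_kx=\sum_{l\in\omega}{\sf s}^k_lx$, the example following it shows these joins can genuinely fail, and the paper states explicitly that the unconditional $\QEA_{\omega}$ case is unknown. So a proof resting only on finite substitutions would settle a question the paper leaves open, which should have been a warning sign. What the $\PEA_{\omega}$ hypothesis really supplies is the \emph{infinitary} substitutions and cylindrifiers: they are needed both for the Daigneault--Monk dilation of Lemma \ref{dilation} (to manufacture $\B\in\PEA_{\beta}$ at all) and to formulate the joins (*) that the principal ultrafilter must preserve.
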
\begin{proof}
We identify notationally set algebras with their domain. 
Assume that  $\A\subseteq_c{\Nr}_n\D$, where $\D\in \PEA_{\omega}$ is atomic. 
We want to completely represent $\A$.  Let $a\in \A$ be non--zero. We will find  a homomorphism $f:\A\to \wp(^nU)$ 
such that $f(a)\neq 0$, and $\bigcup_{y\in Y}f(y)={}^nU$, whenever $Y\subseteq \A$ satisfies $\sum^{\A}Y=1$.

Assume for the moment  (to be proved in a while) that $\A\subseteq_c \D$. Then by lemma \ref{join} $\A$ is atomic, because $\D$. For brevity, let $X=\At\A$. 
Let $\mathfrak{m}$ be the local degree of $\D$, $\mathfrak{c}$ its effective cardinality 
and let $\beta$ be any cardinal such that $\beta\geq \mathfrak{c}$
and $\sum_{s<\mathfrak{m}}\beta^s=\beta$; such notions are defined in \cite{DM, au}.

We can assume by lemma \ref{dilation}, that $\D=\Nr_{\omega}\B$, with $\B\in \PEA_{\beta}$.  
For $\tau\in {}^{\omega}\beta$, we write $\tau^+$ for $\tau\cup Id_{\beta\setminus \omega}(\in {}^\beta\beta$).
Consider the following family of joins evaluated in $\B$,
where $p\in \D$, $\Gamma\subseteq \beta$ and
$\tau\in {}^{\omega}\beta$:
(*) $ {\sf c}_{(\Gamma)}p=\sum^{\B}\{{\sf s}_{{\tau^+}}p: \tau\in {}^{\omega}\beta,\ \  \tau\upharpoonright \omega\setminus\Gamma=Id\},$ and (**):
${\sf s}_{{\tau^+}}^{\B}X=1.$
The first family of joins exists \cite[Proof of Theorem 6.1]{DM}, \cite{au}, and the second exists, 
because $\sum ^{\A}X=\sum ^{\D}X=\sum ^{\B}X=1$ and $\tau^+$ is completely additive, since
$\B\in \PEA_{\beta}$. 

The last equality of suprema follows from the fact that $\D=\Nr_{\omega}\B\subseteq_c \B$ and the first
from the fact that $\A\subseteq_c \D$. We prove the former, the latter is exactly the same replacing
$\omega$ and $\beta$, by $n$ and $\omega$, respectivey, proving that $\Nr_n\D\subseteq_c \D$, hence $\A\subseteq_c \D$.  

We prove that $\Nr_{\omega}\B\subseteq_c \B$. Assume that $S\subseteq \D$ and $\sum ^{\D}S=1$, and for contradiction, that there exists $d\in \B$ such that
$s\leq d< 1$ for all $s\in S$. Let  $J=\Delta d\setminus \omega$ and take  $t=-{\sf c}_{(J)}(-d)\in {\D}$.
Then  ${\sf c}_{(\beta\setminus \omega)}t={\sf c}_{(\beta\setminus \omega)}(-{\sf c}_{(J)} (-d))
=  {\sf c}_{(\beta\setminus \omega)}-{\sf c}_{(J)} (-d)
=  {\sf c}_{(\beta\setminus \omega)} -{\sf c}_{(\beta\setminus \omega)}{\sf c}_{(J)}( -d)
= -{\sf c}_{(\beta\setminus \omega)}{\sf c}_{(J)}( -d)
=-{\sf c}_{(J)}( -d)
=t.$
We have proved that $t\in \D$.
We now show that $s\leq t<1$ for all $s\in S$, which contradicts $\sum^{\D}S=1$.
If $s\in S$, we show that $s\leq t$. By $s\leq d$, we have  $s\cdot -d=0$.
Hence by ${\sf c}_{(J)}s=s$, we get $0={\sf c}_{(J)}(s\cdot -d)=s\cdot {\sf c}_{(J)}(-d)$, so
$s\leq -{\sf c}_{(J)}(-d)$.  It follows that $s\leq t$ as required. Assume for contradiction that 
$1=-{\sf c}_{(J)}(-d)$. Then ${\sf c}_{(J)}(-d)=0$, so $-d =0$ which contradicts that $d<1$. We have proved that $\sum^{\B}S=1$,
so $\D\subseteq_c \B$.

Let $F$ be any Boolean ultrafilter of $\B$ generated by an atom below $a$. We show that $F$
will preserve the family of joins in (*) and (**).
We use a simple topological argument  used by the author in \cite{au}. 
One forms nowhere dense sets in the Stone space of $\B$ 
corresponding to the aforementioned family of joins 
as follows:

The Stone space of (the Boolean reduct of) $\B$ has underlying set,  the set of all Boolean ultrafilters
of $\B$. For $b\in \B$, let $N_b$ be the clopen set $\{F\in S: b\in F\}$.
The required nowhere dense sets are defined for $\Gamma\subseteq \beta$, $p\in \D$ and $\tau\in {}^{\omega}\beta$ via:
$A_{\Gamma,p}=N_{{\sf c}_{(\Gamma)}p}\setminus N_{{\sf s}_{\tau^+}p}$; here we require that $\tau\upharpoonright (\omega\setminus \Gamma)=Id$, 
and $A_{\tau}=S\setminus \bigcup_{x\in X}N_{{\sf s}_{\tau^+}x}.$
The principal ultrafilters are isolated points in the Stone topology, so they lie outside the nowhere dense sets defined above.
Hence any such ultrafilter preserve the joins in (*) and (**). 
Fix a principal ultrafilter $F$ preserving (*) and (**) with $a\in F$. 
For $i, j\in \beta$, set $iEj\iff {\sf d}_{ij}^{\B}\in F$.

Then by the equational properties of diagonal elements and properties of filters, it is easy to show that $E$ is an equivalence relation on $\beta$.
Define $f: \A\to \wp({}^n(\beta/E))$, via $x\mapsto \{\bar{t}\in {}^n(\beta/E): {\sf s}_{t\cup Id}^{\B}x\in F\},$
where $\bar{t}(i/E)=t(i)$ and $t\in {}^n\beta$. 
It is not hard to show that $f$ is well--defined, a homomorphism (from (*)) and atomic (from (**)), such that $f(a)\neq 0$ $({\bar{Id}}\in f(a)$).

The complete representability of ${\bf Nr}_n\D$ follows from lemmata, \ref{join}, 
\ref{join2}, by observing that  ${\bf Nr}_n\D\subseteq_c \D$, hence ${\bf Nr}_n\D\subseteq_c \Nr_n\D$. 
\end{proof}
For $\CPEA$s, we have a slightly weaker result:
\begin{theorem}\label{paa} If $n<\omega$ and  $\D\in {\sf CPEA}_{\omega}$ is atomic,  then any complete subalgebra 
of $\Nr_n\Cm\At\D$  is completely representable. In particular, if $\D$ is complete and atomic, then $\Nr_n\D$ is completely representable.
\end{theorem}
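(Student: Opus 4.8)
The plan is to reduce the assertion to Theorem~\ref{pa}. Set $\C=\Cm\At\D$. Since $\D$ is atomic, $\C$ is its \de\ completion, which again lies in ${\sf CPEA}_{\omega}$; thus $\C$ is complete and atomic and has the same atom structure as $\D$. Being a complex algebra, every operator of $\C$---in particular the (possibly infinitary) substitutions ${\sf s}_{\tau}$ inherited from the ${\sf CPEA}_{\omega}$ signature---is completely additive, distributing over arbitrary suprema. Completeness of $\C$ now lets us restore the infinitary cylindrifiers that were deleted in passing from $\PEA_{\omega}$ to ${\sf CPEA}_{\omega}$: for each $\Gamma\subseteq\omega$ put
\[
{\sf c}_{(\Gamma)}x=\sum^{\C}\{\,{\sf c}_{i_0}\cdots{\sf c}_{i_k}x:\ k<\omega,\ \{i_0,\dots ,i_k\}\subseteq\Gamma\,\},
\]
the supremum of all finite cylindrifications of $x$ along indices drawn from $\Gamma$, which exists because $\C$ is complete. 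Let $\C^{+}$ denote the expansion of $\C$ by the family $({\sf c}_{(\Gamma)}:\Gamma\subseteq\omega)$.

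The decisive step is to check that $\C^{+}\in\PEA_{\omega}$. Its Boolean reduct, diagonal elements, finite cylindrifiers and all substitutions coincide with those of $\C$, so only the polyadic axioms mentioning the infinitary cylindrifiers need verification. Here completeness and complete additivity carry the argument: each such axiom already holds for the finite cylindrifiers (as $\C$ is a ${\sf CPEA}_{\omega}$), and since every ${\sf c}_{(\Gamma)}$ is by construction the directed supremum of finite cylindrifications, while the substitutions and finite cylindrifiers distribute over such suprema, each axiom collapses to an identity between directed joins that completeness validates. Hence $\C^{+}$ is a genuine polyadic equality algebra of dimension $\omega$; it is atomic, sharing its atoms with $\C$.

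Forming the finitary neat reduct does not see the new operators: the condition ${\sf c}_{i}x=x$ for $n\leq i<\omega$ involves only the finite cylindrifiers, so $\Nr_n\Cm\At\D=\Nr_n\C=\Nr_n\Rd_{qea}\C^{+}$, which by our standing convention is $\Nr_n\C^{+}$. Applying Theorem~\ref{pa} to the atomic algebra $\C^{+}\in\PEA_{\omega}$ shows that every complete subalgebra of $\Nr_n\C^{+}=\Nr_n\Cm\At\D$ is completely representable, which is exactly the claim. For the final clause, if $\D$ is complete and atomic then $\Cm\At\D=\D$, so $\Nr_n\Cm\At\D=\Nr_n\D$ and, taking $\A=\Nr_n\D$ (trivially a complete subalgebra of itself) in the first part, $\Nr_n\D$ is completely representable. (For $n\leq 2$ the conclusion is the classical fact recorded in Theorem~\ref{iii}(3) that atomic algebras of dimension $\leq 2$ are completely representable, a property inherited by complete subalgebras through Lemma~\ref{join}; so the interesting range is $2<n<\omega$.)

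The main obstacle is precisely the verification that $\C^{+}\in\PEA_{\omega}$, i.e.\ that the suprema defining the infinitary cylindrifiers obey every polyadic equality axiom. The heart of the matter is that in a complete, completely additive ${\sf CPEA}_{\omega}$ the infinitary cylindrifiers are forced to be the joins of their finitary approximants, so the axioms transfer from the finite level by complete additivity; everything else is routine bookkeeping about which operators survive on passing to the finitary neat reduct.
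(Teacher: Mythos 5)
Your reduction to Theorem~\ref{pa} breaks down at exactly the step you flag as decisive: the expansion $\C^{+}$ is not, in general, a $\PEA_{\omega}$. The polyadic axiom that fails is the one asserting ${\sf s}_{\sigma}{\sf c}_{(\Gamma)}x={\sf s}_{\tau}{\sf c}_{(\Gamma)}x$ whenever $\sigma$ and $\tau$ agree on $\omega\setminus\Gamma$. For infinite $\Gamma$ this does \emph{not} collapse to an identity between directed joins: the only instances of this axiom available in ${\sf CPEA}_{\omega}$ concern maps agreeing off a \emph{finite} set, whereas here $\sigma$ and $\tau$ need only agree off $\Gamma$, so complete additivity gives you nothing to match the two suprema term by term. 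Concretely, let $\D$ be the ${\sf CPEA}_{\omega}$--reduct of the full polyadic equality set algebra with universe $\wp({}^{\omega}U)$, $|U|\geq 2$; it is complete and atomic, so $\C=\Cm\At\D=\D$. Take $x=\{s\}$ where $s\in{}^{\omega}U$ is not eventually constant, and $\Gamma=\omega$. Your ${\sf c}_{(\omega)}x$ is the union of all finite cylindrifications of $\{s\}$, i.e.\ the weak space ${}^{\omega}U^{(s)}$, not the unit ${}^{\omega}U$. Now $\sigma=Id$ and the constant map $\tau\equiv 0$ agree (vacuously) on $\omega\setminus\Gamma=\emptyset$, yet ${\sf s}_{Id}{\sf c}_{(\omega)}x={}^{\omega}U^{(s)}\neq\emptyset$ while ${\sf s}_{\tau}{\sf c}_{(\omega)}x=\{t\in{}^{\omega}U:(t_{0},t_{0},\ldots)\in{}^{\omega}U^{(s)}\}=\emptyset$, because $s$ is not eventually constant. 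So $\C^{+}\notin\PEA_{\omega}$ and Theorem~\ref{pa} cannot be invoked. This is precisely the phenomenon recorded in the paper right after the definition of ${\bf Nr}$: suprema of finite cylindrifications and genuine infinitary cylindrifiers are different operations (one can have ${\sf c}_{(\beta\setminus n)}x>x={\sf c}_{i}x$ for all $i$), and substitutions moving infinitely many points -- which are present in the ${\sf CPEA}$ signature, unlike the $\QEA$ one -- tell them apart.

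The paper's own proof avoids any attempt to restore infinitary cylindrifiers. It applies Lemma~\ref{dilation} directly in the ${\sf CPEA}$ case (the lemma explicitly covers $\sf CPA$ and $\sf CPEA$ with $\Nr$ in place of ${\bf Nr}$), producing a complete atomic dilation $\B\in{\sf CPEA}_{\beta}$ with $\D^{*}=\Cm\At\D=\Nr_{\omega}\B$, where $\beta$ is a \emph{regular} cardinal $>|\D^{*}|$. The joins that drive the ultrafilter construction, ${\sf c}_{k}x=\sum_{l\in\beta}^{\B}{\sf s}_{l}^{k}x$, are then proved by a fresh--index argument: every element has dimension set of size $<\beta$, so regularity supplies $l\in\beta\setminus(\Delta x\cup\Delta y)$, and this replaces the role the infinitary cylindrifiers play in Theorem~\ref{pa}. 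To repair your argument you would have to either prove such joins directly in a dilation (which is what the paper does), or exhibit a genuine $\PEA_{\omega}$ expansion of $\C$ -- and your candidate is not one.
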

\begin{proof}
Let $\D\in {\sf CPEA}_{\omega}$ be atomic. Let $\D^*=\Cm\At\D$. Then $\D^*$ is complete and atomic and $\Nr_n\D^*\subseteq_c \D^*$.
To prove the last $\subseteq_c$, assume for contradiction that 
there is some $S\subseteq \Nr_{n}\D^*$,  $\sum ^{\Nr_{n}\D^*}S=1$, and there exists $d\in \D^*$ such that
$s\leq d< 1$ for all $s\in S$. Take $t=-\bigwedge_{i\in \omega\setminus n}(-{\sf c}_i-)d$.
This infimum is well defined because $\D^*$ is complete. Like in the previous proof it can be proved that ${\sf c}_it=t$ for all $i\in \omega\setminus n$,
hence $t\in \Nr_{n}\D^*$
 and that $s\leq t<1$ for all $s\in S$, 
which contradicts that $\sum ^{\Nr_{n}\D^*}S=1$.

Let $\beta$ be a regular cardinal $>|\D^*|$ and by lemma \ref{dilation}, 
let $\B\in {\sf CPEA}_{\beta}$ be   complete and atomic such that $\D^*=\Nr_{\omega}\B$. 
Then we have the following chain of complete embeddings:
 $\Nr_n\D^*\subseteq_c \D^*=\Nr_{\omega}{\B}\subseteq_c \B$; the last $\subseteq_c$ follows like above using that $\B$ is complete.
From the first $\subseteq_c$, since $\D^*$ is atomic, we get by lemma \ref{join}, 
that $\Nr_n\D^*$ is atomic. Let $X=\At\Nr_n\D^*$. 
Then also from the first $\subseteq_c$, we get that $\sum^{\D^*}X=1$, so $\sum^{\B}X=1$ because $\D^*\subseteq_c \B$. 
For $k<\beta$, $x\in \D^*$ and $\tau\in {}^\omega\beta$,
the following joins hold in $\B$: 
(*) ${\sf c}_kx=\sum_{l\in \beta}^{\B}{\sf s}_l^kx$ and (**) $\sum {\sf s}_{\tau^{+}}^{\B}X=1$, where 
$\tau^+=\tau\cup Id_{\beta\setminus \omega}(\in {}^\beta\beta)$. The join (**) holds, because $\s_{\tau^{+}}^{\B}$ is completely additive, since $\B$ is completely additive.

To prove (*), fix $k<\beta$. Then for all $l\in \beta$, we have ${\sf s}_l^kx\leq {\sf c}_kx$.
Conversely, assume that $y\in \B$ is an upper bound for $\{{\sf s}_l^kx: l\in \beta\}$. 
Let $l\in \beta\setminus (\Delta x\cup \Delta y)$; such an $l$ exists, because $|\Delta x|<\beta$, $|\Delta y|<\beta$ and $\beta$ is regular.
Hence, we get that ${\sf c}_lx=x$ and ${\sf c}_ly=y$.  But then ${\sf c}_l{\sf s}_l^kx\leq y$, and so ${\sf c}_kx\leq y$.
We have proved that (*) hold.

Let $\A=\Nr_n\D^*$. Let  $a\in \A$ be non--zero. We want to find a complete representation $f:\A\to \wp(V)$ ($V$ a unit 
of a $\sf Gs_{n}$, i.e a disjoint 
union of cartesian spaces)  such that $f(a)\neq 0$.  
Let $F$ be any Boolean ultrafilter of $\B$ generated by an atom below $a$. 
Then, like in the proof of theorem \ref{pa}, $F$
will preserve the family of joins in (*) and (**).

Next we proceed exactly like in the proof of theorem \ref{pa}. 
For $i, j\in \beta$, set $iEj\iff {\sf d}_{ij}^{\B}\in F$.
Then $E$ is an equivalence relation on $\beta$.
Define $f: \A\to \wp({}^n(\beta/E))$, via $x\mapsto \{\bar{t}\in {}^n(\beta/E): {\sf s}_{t\cup Id}^{\B}x\in F\},$
where $\bar{t}(i/E)=t(i)$ and $t\in {}^n\beta$. 
Then $f$ is well--defined, a homomorphism (from (*)) and atomic (from (**)).  Also
$f(a)\neq 0$ because $a\in F$, so $Id\in f(a)$.
\end{proof}

\begin{theorem}\label{paaa} If $\D\in \sf CPA_{\omega}$ is atomic and completely additive, then it is completely representable
\end{theorem}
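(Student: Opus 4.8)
The plan is to run the two--step ``dilate and then build an atomic representation from a single principal ultrafilter'' argument of Theorems \ref{pa} and \ref{paa}, with the complete additivity of $\D$ playing exactly the role the diagonal elements played in the equality cases. First I would reduce to the complete atomic case. Since $\D$ is atomic and completely additive, $\D$ embeds densely into $\D^*=\Cm\At\D$, so $\D\subseteq_c\D^*$, and $\D^*$ is a complete atomic member of $\sf CPA_\omega$ (the operations of $\Cm\At\D$ agree with those of $\D$ precisely because $\D$ is completely additive). By Lemma \ref{join} it then suffices to completely represent $\D^*$: the restriction to $\D$ of any complete representation of $\D^*$ is a complete representation of $\D$.

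Next, fixing a regular cardinal $\beta>|\D^*|$ and invoking the $\sf CPA$--version of Lemma \ref{dilation}, I would choose a complete atomic $\B\in\sf CPA_\beta$ with $\D^*=\Nr_\omega\B$, the functional dilation of \cite{DM} keeping $\B$ completely additive. The structural facts I would establish are: (i) $\Nr_\omega\B\subseteq_c\B$, proved as in Theorem \ref{paa}: if some $d\in\B$ with $d<1$ bounded every member of an $S\subseteq\D^*$ with $\sum^{\D^*}S=1$, then the join $t=\sum^{\B}\{e\le d:{\sf c}_ie=e\ \forall i\in\beta\setminus\omega\}$ exists by completeness of $\B$, is again fixed by every ${\sf c}_i$ ($i\in\beta\setminus\omega$) by complete additivity, hence lies in $\Nr_\omega\B=\D^*$, and bounds $S$ while $t\le d<1$, contradicting $\sum^{\D^*}S=1$; (ii) for $k<\beta$ and $x\in\D^*$ the join ${\sf c}_kx=\sum^{\B}_{l\in\beta}{\sf s}^k_lx$ holds, proved by taking an upper bound $y$, choosing $l\notin\Delta x\cup\Delta y$ (possible since $\beta$ is regular and dimension sets are small), and using ${\sf c}_l{\sf s}^k_lx={\sf c}_kx\le{\sf c}_ly=y$; and (iii) for $X=\At\D^*$ one has $\sum^{\B}X=1$ by (i), whence $\sum^{\B}{\sf s}_{\tau^+}X=1$ for every $\tau\in{}^\omega\beta$, where $\tau^+=\tau\cup Id_{\beta\setminus\omega}$, because ${\sf s}_{\tau^+}$ is completely additive and fixes $1$.

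With these joins available I would construct the representation by the Stone/Baire argument of Theorem \ref{pa}, but with no quotient by an equivalence relation, since $\sf CPA$ carries no diagonals. Given $0\ne a\in\D^*$, pick an atom of $\B$ below $a$ and let $F$ be the principal ultrafilter it generates; as an isolated point of the Stone space of $\B$, $F$ lies outside every nowhere dense set attached to the joins in (ii) and (iii), hence preserves them. Define $f_a:\D^*\to\wp({}^\omega\beta)$ by $f_a(x)=\{t\in{}^\omega\beta:{\sf s}_{t^+}x\in F\}$. Routine computations with the polyadic axioms show $f_a$ is a Boolean homomorphism respecting substitutions, that it respects cylindrifiers by (ii), that it is atomic, i.e. $\bigcup_{x\in X}f_a(x)={}^\omega\beta$, by (iii) together with $1\in F$, and that $f_a(a)\ne\emptyset$ since ${\sf s}_{Id}a=a\in F$. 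Taking $f=\prod_{0\ne a\in\D^*}f_a$ into the disjoint union of the bases then yields an injective atomic homomorphism onto a generalized cartesian space of dimension $\omega$, i.e. a complete representation of $\D^*$, and hence of $\D$.

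The main obstacle I anticipate is pinning down precisely where the two hypotheses are indispensable and cannot be traded for the infinitary cylindrifier ${\sf c}_{(\beta\setminus\omega)}$ that carried the $\PA$ argument of \cite{au}: step (i) must use completeness of the dilation $\B$ (and hence the preliminary passage to $\D^*$) in place of that single operator, while step (iii) must use complete additivity of the substitutions to push $\sum X=1$ through ${\sf s}_{\tau^+}$. Consequently the two points requiring genuine care are verifying that the dilation furnished by Lemma \ref{dilation} really preserves complete additivity, and checking that the concrete cylindrification on ${}^\omega\beta$ matches $f_a({\sf c}_kx)$ via (ii); everything else is the same bookkeeping as in Theorems \ref{pa} and \ref{paa}.
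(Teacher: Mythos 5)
Your proposal is correct and follows essentially the same route as the paper's proof: pass to the \de\ completion $\D^*=\Cm\At\D$ (which is where complete additivity is needed), form a complete atomic dilation $\B\in {\sf CPA}_{\beta}$ for a regular cardinal $\beta>|\D^*|$ via lemma \ref{dilation}, establish $\D^*\subseteq_c \B$ together with the joins ${\sf c}_kx=\sum_{l\in \beta}{\sf s}^k_lx$ and $\sum {\sf s}_{\tau^+}\At\D^*=1$, and then read off the complete representation $d\mapsto \{\tau\in {}^{\omega}\beta: {\sf s}_{\tau^+}^{\B}d\in F\}$ from a principal ultrafilter $F$ generated by an atom below the given non-zero element. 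The only differences are presentational: your upper bound $t=\sum^{\B}\{e\leq d: e\in \Nr_{\omega}\B\}$ replaces the paper's $-\bigwedge_{i}(-{\sf c}_i-)d$ in proving $\Nr_{\omega}\B\subseteq_c\B$ (both are valid), and you make explicit the final product over all non-zero $a$ and the need to check that the dilation of lemma \ref{dilation} preserves complete additivity, points the paper leaves implicit.
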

\begin{proof} Replace $\D$ by its \de\ completion $\D^*=\Cm\At\D$. Then $\D$ is completely representable $\iff$ $\D^*$ is completely representable 
and furthermore $\D^*$ is complete.
It suffices thus to show that $\D^*$  is completely representable. 
One forms an atomic complete dilation $\B$ of $\D^*$ to a {\it regular cardinal $\beta>|\D^*|$} exactly as in lemma \ref{dilation}. 
For $\tau\in {}^{\omega}\beta$, let $\tau^+=\tau\cup Id_{\beta\setminus \omega}$.
Then like before $\D^*\subseteq_c \B$ and so    
the following family  of joins hold in $\B$: For all $i<\beta$, 
$b\in \B$  ${\sf c}_ib=\sum_{j\in \beta} {\sf s}_j^ix$
and for all $\tau\in {}^{\omega}\beta$, $\sum {\sf s}_{{\tau}^+}\At\D^*=1$. Let $a\in \D^*$ be non zero. 
Take any ultrafilter $F$ in the Stone space of $\B$ generated by an atom below $a$. 
Then   $f:\D^*\to \wp(^{\omega}\beta)$ defined 
via $d\mapsto \{\tau\in {}^{\omega}\beta: \s_{\tau^+}^{\B}d\in F\}$ 
is a complete representation of $\D^*$ such that $f(a)\neq 0$. 
\end{proof}

If the dilations are in $\QEA_{\omega}$ (an $\omega$ dimensional quasi--polyadic equality algebra) we have a weaker result. We do not know whether the result proved for $\PEA_{\omega}$ holds
when the $\omega$--dilation is an atomic $\QEA_{\omega}$. Entirely analogous results hold if we replace $\QEA_{\omega}$ by $\QA_{\omega}$.

\begin{theorem} \label{suf} Let $n<\omega$. Let $\D\in \QEA_{\omega}$ be atomic. 
Assume that  for all $x\in \D$ for all $k<\omega$, 
${\sf c}_kx=\sum_{l\in \omega}{\sf s}_l^kx$. 
If $\A\subseteq \Nr_{n}\D$  such that $\A\subseteq_c \D$ (this is stronger than $\A\subseteq_c \Nr_n\D$), 
then $\A$ is completely representable. 
\end{theorem}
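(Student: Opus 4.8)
The plan is to run the topological argument of theorem \ref{pa} and of \cite{au}, but carried out \emph{inside $\D$ itself} rather than in a dilation. In the $\PEA$ setting of theorem \ref{pa} one first passes, via lemma \ref{dilation}, to a dilation $\B\in\PEA_{\beta}$ with $\D={\bf Nr}_{\omega}\B$, and the join ${\sf c}_{(\Gamma)}p=\sum{\sf s}_{\tau^{+}}p$ needed to represent cylindrifiers is supplied for free by the rich substitution structure of $\B$. In $\QEA_{\omega}$ no such dilation is available, since lemma \ref{dilation} relies on the infinitary substitutions of $\PA/\PEA$ (and their $\sf C$--reducts) that $\QEA$ lacks. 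This is exactly what the hypothesis ${\sf c}_kx=\sum_{l<\omega}{\sf s}^k_lx$ compensates for: it provides, for single--index cylindrifiers and for the finite substitutions actually present in $\QEA$, the one family of joins we must preserve. The stronger assumption $\A\subseteq_c\D$ (rather than merely $\A\subseteq_c\Nr_n\D$) guarantees that the suprema in play remain suprema when computed in $\D$, so the whole construction can be done at the level of $\D$.

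Concretely, first I would note that since $\A\subseteq_c\D$ and $\D$ is atomic, lemma \ref{join} makes $\A$ atomic; write $X=\At\A$, so $\sum^{\D}X=\sum^{\A}X=1$. Two families of joins in $\D$ then have to be preserved by the ultrafilter to be chosen: (i) the hypothesis, ${\sf c}_kx=\sum^{\D}_{l<\omega}{\sf s}^k_lx$ for every $k<\omega$ and every $x\in\D$; and (ii) for each $t\in{}^n\omega$, with associated finite substitution $\sigma=t\cup Id_{\omega\setminus n}$, the join $\sum^{\D}_{x\in X}{\sf s}_{\sigma}x=1$. Family (ii) holds because each finite substitution ${\sf s}_{\sigma}$ is a completely additive, unital Boolean endomorphism of $\D$: transpositions are Boolean automorphisms, hence preserve all existing suprema, while each replacement ${\sf s}^i_j={\sf c}_i({\sf d}_{ij}\cdot-)$ inherits complete additivity from ${\sf c}_i$, which is completely additive because it is self--conjugate (J\'onsson--Tarski). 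Thus ${\sf s}_{\sigma}\sum^{\D}X={\sf s}_{\sigma}1=1=\sum^{\D}_{x\in X}{\sf s}_{\sigma}x$.

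Next, fixing a non--zero $a\in\A$ and an atom $d\leq a$ of $\D$, I would take $F$ to be the principal ultrafilter of $\D$ generated by $d$. In the Stone space of the Boolean reduct of $\D$, each set $N_{{\sf c}_kx}\setminus\bigcup_{l<\omega}N_{{\sf s}^k_lx}$ and each set $S\setminus\bigcup_{x\in X}N_{{\sf s}_{\sigma}x}$ is nowhere dense precisely because the corresponding join in (i), resp. (ii), holds; and a principal ultrafilter, being an isolated point, lies outside every nowhere dense set. Hence $F$ preserves both families of joins, with no appeal to countability of $\D$ or to Baire category. I would then set $iEj\iff{\sf d}_{ij}\in F$ (an equivalence relation, by the equational theory of the diagonals and the ultrafilter property) and define $f:\A\to\wp({}^n(\omega/E))$ by $f(x)=\{\bar t\in{}^n(\omega/E):{\sf s}_{t\cup Id}x\in F\}$, where $\bar t(i/E)=t(i)$. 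The verification is then parallel to theorem \ref{pa}: well--definedness in the representatives of $\bar t$ uses the interaction of diagonals with finite substitutions and the definition of $E$; the Boolean clauses hold since each ${\sf s}_{t\cup Id}$ is a Boolean homomorphism and $F$ an ultrafilter; diagonals and finite substitutions are handled by the quasi--polyadic composition identities; $f$ is atomic, i.e. $\bigcup_{x\in X}f(x)={}^n(\omega/E)$, which is exactly the content of (ii); and $f(a)\neq0$ because ${\sf s}_{Id}a=a\in F$ puts the tuple $\bar t$ with $t$ the inclusion $n\hookrightarrow\omega$ into $f(a)$. Taking the product $g=\prod_{0\neq a\in\A}f_a$ of such maps over all non--zero $a$ yields an injective complete homomorphism into a ${\sf Gs}_n$ whose unit is the disjoint union of the spaces ${}^n(\omega/E_a)$; since each $f_a$ is atomic and $\A$ is atomic, $g$ is the desired complete representation.

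The step I expect to be the main obstacle is showing that $f$ respects cylindrification, $f({\sf c}_kx)={\sf c}_kf(x)$ for $k<n$. In the $\PEA$ case this used the full $\Gamma$--cylindrifier joins of the dilation; here only the single--index joins of (i) are available, so one must verify that commuting ${\sf s}_{t\cup Id}$ past ${\sf c}_k$ through the quasi--polyadic axioms reduces the concrete condition ``there is $\bar u\equiv_k\bar t$ with $\bar u\in f(x)$'' \emph{exactly} to ``${\sf s}_{t\cup Id}{\sf c}_kx\in F$'', using that $F$ preserves the instance of (i) applied to the element ${\sf s}_{t\cup Id}x\in\D$ (legitimate, since the hypothesis is assumed for all of $\D$). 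It is precisely this single--index reduction, in place of a dilation argument, that carries the weight of the proof and that pins down why the hypothesis on $\D$ is exactly the right one.
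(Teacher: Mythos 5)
Your proof is correct and is essentially the paper's own argument: atomicity of $\A$ via lemma \ref{join}, the same two families of joins (the hypothesis, plus $\sum_{x\in \At\A}{\sf s}_{\sigma}x=1$ obtained from $\A\subseteq_c\D$ and complete additivity of the finite substitutions), a principal ultrafilter of $\D$ generated by an atom of $\D$ below $a$ (which, being an isolated point of the Stone space, lies outside the relevant nowhere dense sets and hence preserves all existing joins), and the map $x\mapsto\{\bar t\in{}^n(\omega/E):{\sf s}_{t\cup Id}x\in F\}$ followed by a product over non--zero $a$. The one step you flag as an obstacle, preservation of cylindrifiers, dissolves exactly as in the paper's theorems \ref{pa}--\ref{three}: applying the completely additive operator ${\sf s}_{t\cup Id}$ to the hypothesis join gives ${\sf s}_{t\cup Id}{\sf c}_kx=\sum_{l}{\sf s}_{t\cup Id}{\sf s}^k_lx$, and $F$, generated by an atom of $\D$, preserves these substituted joins as well.
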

\begin{proof} First observe that $\A$ is atomic, because $\D$ is atomic and $\A\subseteq_c \D$.
Accordingly, let $X=\At\A$. Let $a\in \A$ be non-zero. 
Like  before, one finds a principal ultrafilter $F$ such that $a\in F$ and $F$ preserves the family of joins ${\sf c}_{i}x=\sum^{\D}_{j\in \beta} {\sf s}_j^i x$, 
and $\sum {\sf s}^{\D}_{\tau}X=1$, where $\tau:\omega\to \omega$ is a finite transformation; that is $|\{i\in \omega: \tau(i)\neq i\}| <\omega$. 
The first family of joins exists by assumption, the second exists, since $\sum^{\D}X=1$ by 
$\A\subseteq_c \D$ and the $\s_{\tau}$s are completely additive.   Any principal ultrafilter $F$ generated by an atom below $a$ will do, as shown in the previous 
proof.  Again as before, the selected $F$ gives the required complete representation $f$ of $\A$. 
\end{proof}
The following example shows that the existence of the joins in theorem \ref{suf} is 
not necessary.

\begin{example}
Let $\D\in \QEA_{\omega}$ be the full weak set algebra with top element $^{\omega}\omega^{(\bold 0)}$ where $\bold 0$ is the constant $0$ sequence. 
Then, it is easy to show that for any $n<\omega$, $\Nr_n\A$
is completely representable.  
Let $X=\{\bold 0\}\in \D$. Then for all $i\in \omega$, we have ${\sf s}_i^0X=X$. 
But $(1, 0,\ldots )\in {\sf c}_0X$, so that $\sum_{i\in \omega} {\sf s}_i^0X=X\neq {\sf c}_0X$.  
Hence the joins in theorem \ref{suf} do not hold.
\end{example}

Now fix $1<n<\omega$ and let $\D$ be as in the previous example.  If we take $\D'=\Sg^{\D}\Nr_n\D$, then $\D'$ of course will still be a weak set algebra, and it will be locally finite, so that 
${\sf c}_ix=\sum^{\D}{\sf s}_j^ix$ for all $i<j<\omega$.
However, $\D'$ will be atomless as we proceed to show. Assume for contradiction  that it is not, and let $x\in \D'$ be an atom. 
Choose $k, l\in \omega$ with $k\neq l$ and ${\sf c}_kx=x$, this is possible since $\omega\setminus \Delta x$ is infinite. 
Then ${\sf c}_k(x\cdot  {\sf d}_{kl})=x$, so $x\cdot {\sf d}_{kl}\neq 0$. 
But $x$ is an atom, so $x\leq {\sf d}_{kl}$. This gives that $\Delta x=0$, 
and by \cite[Theorem 1.3.19]{HMT2} $x\leq -{\sf c}_k-{\sf d}_{kl}$. 
It is also easy to see that $({\sf c}_k -{\sf d}_{kl})^{\D'}={}^{\omega}\omega^{(\bold 0)}$, 
from which we conclude that $x=0$, 
which is a contradiction.

\section{Vaught's theorem and omitting types for globally guarded fragments}

Now we address complete representations with respect to {\it globally guarded relativized semantics} 
in terms of the existence of atomic $\omega$--dilations.  
We will be rewarded with some positive results on Vaught's theorem and omitting types for guarded versions of $L_n$ $(n\in \omega$)
and 
$L_{\omega, \omega}$.

As usual our approach is algebraic. Unless otherwise indicated, $n$ is a finite ordinal.
The class ${\sf G}_n$ is the class of set algebras having the same signature as $\PEA_n$; if $\A\in \sf G_n$, then  the 
top element of $\A$ is a set of $n$--ary sequences $V\subseteq {}^nU$ (some non-empty set $U$),
such that if $s\in V$ and $\tau: n\to n$ is a finite transformation, then $s\circ \tau\in V$. 
The operations of $\A$ with top element $V$, whose domain is a subset of $\wp(V)$, 
are like the operations in polyadic equality set algebras of dimension $n$,
but relativized to the top element $V.$  
This last class was  dealt with recently in \cite{Fer2} and is commonly referred to as ${\sf G}_n$.   It is known \cite{Fer2}, 
that for $n>1$,  $\CA_n\nsubseteq  \bold I{\sf D}_n$ 
and $\sf PEA_n\subseteq \bold I{\sf G}_n.$ It is clear by definition that ${\sf Gs}_n\subseteq {\sf D}_n$,  
hence ${\sf RCA}_n\subseteq  \bold I {\sf D_n}$. Units of $\sf D_n$, and $\sf G_n$ are unions of (not necessarily disjoint) weak spaces 
of dimension $n$.

Let ${\sf CQE}_{\omega}$ be the class of cylindric quasi-polyadic equality algebras in \cite[Definition 6.2.5]{Fer2}. 
Let ${\sf CRG}_n$ stand for the class of completely representable $\sf G_n$s. It is known \cite{Fer2} that ${\sf CRG}_n={\sf G}_n\cap \At.$
Whereas we do not know whether for $ 2<n<\omega$,  $\bold S_c\Nr_n\QEA_{\omega}^{\sf at}=\sf CRPEA_n$ or not, 
we have a `relativized version' of this equality, when we do not insists on full fledged commutativity of cylindrifiers:
\begin{theorem}\label{one}
For any $n<\omega$, $\bold S_c{\sf Nr}_n({\sf CQE}_{\omega}\cap \At)={\sf Gs}_n\cap \At={\sf CRG}_n$.
\end{theorem}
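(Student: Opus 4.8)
The plan is to run a cycle of three inclusions, taking as given the result of \cite{Fer2} that ${\sf CRG}_n={\sf G}_n\cap\At$ (for the guarded class, atomicity is equivalent to complete representability). I would establish
$${\sf Gs}_n\cap\At\subseteq {\sf CRG}_n\subseteq \bold S_c\Nr_n({\sf CQE}_\omega\cap\At)\subseteq {\sf Gs}_n\cap\At,$$
which forces the three classes to coincide. The first inclusion is immediate: every generalized cartesian square $\bigcup_i{}^nU_i$ is closed under finite transformations, so ${\sf Gs}_n\subseteq{\sf G}_n$ as concrete set algebras; intersecting with $\At$ and invoking ${\sf G}_n\cap\At={\sf CRG}_n$ gives ${\sf Gs}_n\cap\At\subseteq{\sf CRG}_n$. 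The substance lies in the other two inclusions.

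For ${\sf CRG}_n\subseteq\bold S_c\Nr_n({\sf CQE}_\omega\cap\At)$ I would adapt the dilation construction in the first item of theorem \ref{iii}. Starting from a complete representation $h:\A\to\wp(V)$ of $\A\in{\sf CRG}_n$, where $V$ is a guarded unit, I would expand each component weak space of $V$ to dimension $\omega$, form the full $\omega$--dimensional set algebra on it, and take the product $\D$ of these algebras. Then $\D$ is atomic, being a product of atomic Boolean algebras, and lies in ${\sf CQE}_\omega$; here it is essential that ${\sf CQE}_\omega$ does not demand full commutativity of the infinitely many cylindrifiers, which is exactly what the overlapping, non--square components of a guarded unit fail to provide. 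Finally I would check, as in theorem \ref{iii}(1), that the induced map $\A\to\Nr_n\D$ is an atomic, hence complete, embedding; the one new point relative to theorem \ref{iii}(1) is that the component weak spaces may overlap, which I would resolve using the atom--colouring of $V$ furnished by completeness of $h$, each tuple realizing a unique atom.

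The last inclusion $\bold S_c\Nr_n({\sf CQE}_\omega\cap\At)\subseteq{\sf Gs}_n\cap\At$ is the heart of the matter and the main obstacle. Given $\A\subseteq_c\Nr_n\D$ with $\D\in{\sf CQE}_\omega$ atomic, atomicity of $\A$ follows from lemma \ref{join} once one checks $\Nr_n\D\subseteq_c\D$ exactly as in theorems \ref{pa} and \ref{paa}, via the element $t=-\bigwedge_{i\in\omega\setminus n}(-{\sf c}_i-)d$. The crux is to produce a representation on a genuine cartesian square rather than merely on a guarded unit. For this I would pass, through the ${\sf CQE}$--analogue of lemma \ref{dilation}, to a dilation $\B\in{\sf CQE}_\beta$ for a large regular cardinal $\beta$, in which the joins ${\sf c}_ix=\sum_{l\in\beta}{\sf s}_l^ix$ hold for every $i<\beta$ together with $\sum{\sf s}_\tau\At\A=1$ for finite $\tau$; the first family encodes squareness and is available precisely because $\beta$ is regular and every element has dimension set of size $<\beta$. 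Choosing, as in theorems \ref{pa}--\ref{suf}, a principal ultrafilter $F$ below a prescribed nonzero $a$ that preserves both families of joins, the map $f(x)=\{\bar{t}:{\sf s}_{t\cup Id}^{\B}x\in F\}$ becomes a complete representation whose cylindrifiers act as genuine projections over the full base $\beta/E$; hence its unit is a disjoint union of cartesian squares and $f(\A)\in{\sf Gs}_n\cap\At$. The delicate step, and precisely where a naive argument settles for only ${\sf G}_n$, is verifying that ${\sf c}_ix=\sum_l{\sf s}_l^ix$ genuinely holds in the ${\sf CQE}$--dilation, since a quasi--polyadic dilation carries only finite substitutions; establishing these joins is what upgrades the guarded representation to a square one.
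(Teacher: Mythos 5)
Your cycle breaks at its third inclusion, and the root of the problem is that you have taken the printed statement too literally: the occurrence of ${\sf Gs}_n$ in the theorem is a typo for ${\sf G}_n$. The sentence immediately preceding the theorem records the result of \cite{Fer2} that ${\sf CRG}_n={\sf G}_n\cap \At$, the paper's own proof opens with an atomic $\C\in {\sf G}_n$ and explicitly contrasts its unit with ${\sf Gs}_n$ units, and ${\sf Gs}_n$ does not even carry the substitution operations present in the signatures of ${\sf CRG}_n$ and $\Nr_n({\sf CQE}_{\omega})$. With ${\sf Gs}_n$ the asserted equality is outright false: every algebra in $\bold I{\sf Gs}_n$ satisfies commutativity of cylindrifiers (units are disjoint unions of squares), whereas your own correct middle inclusion ${\sf CRG}_n\subseteq \bold S_c\Nr_n({\sf CQE}_{\omega}\cap \At)$ --- which is exactly the paper's forward direction --- puts every atomic ${\sf G}_n$ into the left-hand class, and ${\sf G}_n\cap\At$ contains non-commutative algebras. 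For instance, on the unit $V={}^2U_1\cup {}^2U_2$ with $U_1\cap U_2\neq \emptyset$ and $U_1\neq U_2$ one checks directly that ${\sf c}_0{\sf c}_1\neq {\sf c}_1{\sf c}_0$ on $\wp(V)$, which is an atomic ${\sf G}_2$; alternatively, $\PEA_n\subseteq \bold I{\sf G}_n$ (quoted in the paper from \cite{Fer2}) places even non-representable atomic $\PEA_n$s inside ${\sf G}_n\cap\At$. So no proof of $\bold S_c\Nr_n({\sf CQE}_{\omega}\cap \At)\subseteq {\sf Gs}_n\cap \At$ can exist; you flagged the danger yourself (``where a naive argument settles for only ${\sf G}_n$''), but the upgrade you propose is impossible, not merely delicate.

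The two technical devices you invoke for that upgrade are also unavailable. There is no ${\sf CQE}$--analogue of lemma \ref{dilation}: that lemma is proved only for $\PA$, $\PEA$, ${\sf CPA}$ and ${\sf CPEA}$, and its functional dilation consists of elements ${\sf s}_{\sigma}p$ with $\sigma\upharpoonright \alpha$ injective, i.e.\ it requires the infinitary substitutions which the quasi-polyadic signature of ${\sf CQE}_{\omega}$ lacks --- this is precisely why the paper leaves the $\QEA_{\omega}$ case open after theorem \ref{suf}. Moreover the joins ${\sf c}_ix=\sum_{l}{\sf s}^i_lx$ cannot simply be ``established'': the example following theorem \ref{suf} exhibits a full weak set algebra in which $\sum_{i}{\sf s}^0_iX=X\neq {\sf c}_0X$, so these joins can fail even in commutative dilations, let alone be derivable in a guarded variety; and if they did hold they would force a square representation, contradicting the non-commutativity just noted. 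The paper's actual converse is short and entirely different: ${\sf CQE}_{\omega}$ is definitionally equivalent to ${\sf TA}_{\omega}$ \cite{Fer}, atomic ${\sf TA}_{\omega}$s are completely representable by \cite{Fer}, and the complete representation of the atomic dilation $\D$ restricts along $\A\subseteq_c \Nr_n\D$ (as in lemma \ref{join}) to a complete ${\sf G}_n$ --- not ${\sf Gs}_n$ --- representation of $\A$. With the statement corrected to ${\sf G}_n$, your first two inclusions collapse into the Fer2 identity plus the paper's forward direction, and the cycle should be closed with this equivalence step rather than with a squareness upgrade.
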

\begin{proof} Assume that $\C\in \sf G_{n}$ is atomic, hence completely representable. Assume further that ${\sf G}_n$ 
has unit
a union of the form $\bigcup_{i\in I}{}^nU_i$($I$ and $U_i$ non--empty sets). Units of $\sf G_n$s are of this form \cite[Theorem 6.2.8]{Fer2}.
The difference from $\sf Gs_n$s units is that the $U_i$s are not required to be pairwise disjoint.
But the proof is exactly like that of the first item of theorem \ref{iii}: For $i\in I$, let $E_i={}^nU_i$. Fix $f_i\in {}^{\omega}U_i$. Let $W_i={}^{\omega}U_i^{(f_i)}$.
Let ${\C}_i=\wp(W_i)$. Then $\C_i$ is atomic,
and $\Nr_{n}\C_i$ is atomic.
Define $h_i: \A\to \Nr_{n}\C_i$ by
$h_i(a)=\{s\in W_i: \exists a'\in \At\A, a'\leq a;  (s(i): i<n)\in a'\}$. 
Let $\D=\bold P _i \C_i$. Let $\pi_i:\D\to \C_i$ be the $i$th projection map.
Then $\D$ is atomic and $\A$ embeds into $\Nr_{n}\D$ via $J:a\mapsto (h_i(a) :i\in I)$.  The embedding is atomic, hence complete.
We have shown that $\A\in \bold S_c{\sf Nr}_{n}(\sf CQE_{\omega}\cap \sf At).$

Conversely,  assume that 
$\A\in  \bold S_c{\sf Nr}_n({\sf CQE}_{\omega}\cap \At)$. 
The variety ${\sf CQE}_{\omega}$ is definitionally equivalent to ${\sf TA}_{\omega}$ \cite{Fer} and \cite[p.149]{Fer2}, so 
$\A\in \bold S_c{\sf Nr}_n({\sf TA}_{\omega}\cap \At)$. 
Assume tha $\A\subseteq_c \Nr_n\D$ for some
atomic $\D \in \sf TA_{\omega}$. Then by \cite{Fer}, $\D$ is  completely representable.  
so $\Nr_n\D$ is completely repesentable, and this 
induces a complete representation of $\A(\subseteq_c \Nr_n\D)$,  
hence $\A\in \sf G_n\cap \At$.
\end{proof}

We consider  the class of algebras having the same signature as that of $\sf CPEA_{\omega}$ and its axiomatization is obtained from that of $\sf CPEA_{\omega}$ 
by weakening the axiom of 
commutativity of cylindrifiers and replacing  it by the weaker axiom $({\sf CP_9})^*$ as in \cite[Definition 6.3.7]{Fer2}. 
This class is studied in \cite{Fer2}, 
under the name of cylindric--polyadic equality algebras of dimension $\omega$ denoted in 
{\it op.cit} by $\sf CPE_{\omega}$ which is the notation we adopt here. 
\begin{theorem}\label{two} 
If $\D\in {\sf CPE}_{\omega}$ is atomic, then 
$\D$ is completely representable. 
\end{theorem}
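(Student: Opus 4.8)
The plan is to follow the dilation-and-ultrafilter strategy already used for Theorems \ref{pa}, \ref{paa} and \ref{paaa}, now producing a \emph{relativized} (globally guarded) complete representation, where the relativization is forced by the weakened commutativity axiom $({\sf CP}_9)^*$. First I would reduce to the complete atomic case. Set $\D^{*}=\Cm\At\D$. Granting that ${\sf CPE}_{\omega}$ is atom--canonical --- so that $\D^{*}\in{\sf CPE}_{\omega}$; this is plausible because $({\sf CP}_9)^*$ is a weakening of commutativity of the form preserved under \de\ completions, cf. \cite{Fer2} --- and noting that $\D\subseteq_c\D^{*}$ (the \de\ completion preserving existing suprema), Lemma \ref{join} reduces the task to completely representing $\D^{*}$, which is now complete, atomic and completely additive. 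It then suffices, for each non--zero $a\in\D^{*}$, to produce a homomorphism into an atomic relativized set algebra that is non--zero at $a$, and to take the product over all such $a$.

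Next I would dilate. Using the ${\sf CPE}$--analogue of Lemma \ref{dilation}, I would build a complete atomic $\B\in{\sf CPE}_{\beta}$ for a regular cardinal $\beta>|\D^{*}|$ with $\D^{*}=\Nr_{\omega}\B$; exactly as in Theorem \ref{paa} one checks $\Nr_{\omega}\B\subseteq_c\B$ using completeness and the element $t=-\bigwedge_{i\in\beta\setminus\omega}(-{\sf c}_i-)d$, whence $\D^{*}\subseteq_c\B$. In $\B$ I would establish the two crucial families of joins: first, ${\sf c}_k x=\sum_{l\in\beta}{\sf s}^{k}_{l}x$ for $x\in\D^{*}$ and $k<\beta$, proved as before by choosing, for any upper bound $y$, an index $l\notin\Delta x\cup\Delta y$ (possible since $\beta$ is regular and the dimension sets are small) so that ${\sf c}_l{\sf s}^{k}_{l}x\le y$ and hence ${\sf c}_k x\le y$; and second, $\sum{\sf s}_{\tau^{+}}^{\B}X=1$ where $X=\At\D^{*}$ and $\tau^{+}=\tau\cup Id_{\beta\setminus\omega}$, which holds because $\sum^{\B}X=1$ (from $\D^{*}\subseteq_c\B$) and each ${\sf s}_{\tau^{+}}$ is completely additive.

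I would then fix a non--zero $a\in\D^{*}$ and choose a principal ultrafilter $F$ of $\B$ generated by an atom below $a$. By the topological argument of Theorem \ref{pa} --- the sets $N_{{\sf c}_k p}\setminus\bigcup_{l}N_{{\sf s}^{k}_{l}p}$ and $S\setminus\bigcup_{x\in X}N_{{\sf s}_{\tau^{+}}x}$ are nowhere dense in the Stone space of $\B$, and principal ultrafilters are isolated points --- $F$ preserves all the joins above. Setting $iEj\iff {\sf d}_{ij}^{\B}\in F$ gives an equivalence relation on $\beta$, and I would define $f\colon\D^{*}\to\wp({}^{\omega}(\beta/E))$ by $x\mapsto\{\bar t:{\sf s}_{t\cup Id}^{\B}x\in F\}$. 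Preservation of the first family makes $f$ respect cylindrifiers, preservation of the second makes $f$ atomic, the diagonal axioms make $f$ well defined on $E$--classes, and $a\in F$ gives $\bar{Id}\in f(a)$, so $f(a)\neq 0$. Taking the product of these maps $f_a$ yields a complete representation of $\D^{*}$, and hence of $\D$ by Lemma \ref{join}.

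The main obstacle I anticipate is twofold, and both parts stem from the weakened commutativity. First, I must verify that the functional--dilation construction of Lemma \ref{dilation} genuinely produces an algebra satisfying $({\sf CP}_9)^*$ rather than full commutativity, and that the neat--reduct identity $\D^{*}=\Nr_{\omega}\B$ survives; this amounts to checking that Ferenczi's relativized axiom is preserved by the re--shuffling of the operations along the permutations $\rho$ used to define the dilated cylindrifiers. Second, and more essentially, because cylindrifiers need not commute the map $f$ will \emph{not} land in a cylindric set algebra with a single rectangular (weak) unit: its top element is a union of (possibly overlapping) weak spaces ${}^{\omega}U_i^{(p_i)}$ with operations relativized to it, i.e. $f$ represents $\D^{*}$ inside a ${\sf G}_{\omega}$--type algebra as in \cite{Fer2}. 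The delicate point is thus to show that the $E$--classes organize into exactly such a guarded unit and that $f$ is a homomorphism for the relativized cylindrifiers under $({\sf CP}_9)^*$; here I would invoke Ferenczi's representation machinery \cite[\S 6.3]{Fer2} for the guarded part and graft onto it the join--preservation provided by $F$ to secure completeness.
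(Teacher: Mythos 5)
Your skeleton is the same as the paper's (reduce to $\D^{*}=\Cm\At\D$, dilate to a complete atomic $\B\in{\sf CPE}_{\beta}$ with $\D^{*}=\Nr_{\omega}\B\subseteq_{c}\B$, take a principal ultrafilter $F$ generated by an atom below $a$, quotient by the diagonal relation, read off a guarded complete representation), but there is a genuine gap at exactly the point you wave through with ``Preservation of the first family makes $f$ respect cylindrifiers.'' Two things go wrong. First, your first family of joins is the \emph{unprefixed} one, ${\sf c}_{k}x=\sum_{l\in\beta}{\sf s}^{k}_{l}x$, and your proof of it (``choose $l\notin\Delta x\cup\Delta y$ so that ${\sf c}_{l}{\sf s}^{k}_{l}x\leq y$, hence ${\sf c}_{k}x\leq y$'') silently uses the identity ${\sf c}_{l}{\sf s}^{k}_{l}x={\sf c}_{k}x$, whose standard derivation rests on commutativity of cylindrifiers --- precisely what $({\sf CP}_{9})^{*}$ withholds. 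This is why the paper's proof of theorem \ref{two} does \emph{not} assert the plain joins of theorems \ref{pa}/\ref{paa}, but only their versions ``with the intervention of admissible substitutions'': ${\sf s}_{\tau}{\sf c}_{i}b=\sum_{j\in\beta}{\sf s}_{\tau}{\sf s}^{i}_{j}b$ and $\sum^{\B}{\sf s}_{\tau}\At\D=1$, for $\tau\in{\sf adm}$ (i.e.\ $\tau$ fixing $\beta\setminus\omega$ and moving points of $\omega$ only into $\beta\setminus\omega$). These, not the unprefixed ones, are the joins the ultrafilter must preserve.

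Second, and relatedly, your explicit map $f\colon\D^{*}\to\wp({}^{\omega}(\beta/E))$, $x\mapsto\{\bar t:{\sf s}_{t\cup Id}x\in F\}$ on \emph{all} assignments cannot be a homomorphism: relativized cylindrifiers on a square unit ${}^{\omega}(\beta/E)$ commute, so an injective homomorphism onto such a set algebra would force ${\sf c}_{i}{\sf c}_{j}={\sf c}_{j}{\sf c}_{i}$ in $\D^{*}$, contradicting the very generality of ${\sf CPE}_{\omega}$. The paper's fix is to cut the unit down \emph{in step with the admissible substitutions}: one sets $\Gamma=\{i\in\beta:\exists j\in\omega,\ {\sf c}_{i}{\sf d}_{ij}\in F\}$, shows $\omega\subsetneq\Gamma$, puts $M=\Gamma/\!\sim$ with $m\sim n\iff{\sf d}_{mn}\in F$, and takes as top element only $V=\{\bar\tau\in{}^{\omega}M:\tau\colon\omega\to\Gamma,\ \rng(\tau)\cap\omega=\emptyset\}$, defining $f(d)=\{\bar\tau\in V:{\sf s}_{\tau^{+}}^{\B}d\in F\}$. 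On this unit every assignment corresponds to an admissible $\tau^{+}$, so the prefixed joins are exactly what is needed for preservation of cylindrifiers, and (**) gives atomicity, hence completeness; the unit is then automatically a union of not-necessarily-disjoint cartesian spaces. Your closing paragraph correctly senses that the unit must be guarded and that the homomorphism step is delicate, but it contradicts the middle of your argument (which works on the full square unit with unprefixed joins), and deferring the resolution to ``Ferenczi's representation machinery'' leaves out the actual content: the systematic use of admissible substitutions both in the family of joins handed to $F$ and in the definition of the unit $V$.
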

\begin{proof}   
It suffices thus to show that $\D^*$  is completely representable. 
One forms an atomic dilation $\B$ of $\D^*$ to a {\it regular cardinal $\beta>|\D^*|$} exactly as in lemma \ref{dilation}. Since 
the Boolean reduct of $\B$, by construction, is isomorphic to a product of the complete Boolean reduct of the algebra $\D^*$, then $\B$ is complete
and $\D^*\subseteq_c \B$.

Analogous joins of (*) and (**) in the proof of theorem \ref{pa} remain valid in $\B$ but with the intervention of so--called {\it admissable substitutions}, $\sf adm$ for short.
The map $\tau:\beta\to \beta$ is {\it admissable}, if it does not move elements in $\beta\setminus\omega$, and that  those elements in $\omega$ 
that are moved by $\tau$ are mapped into elements in $\beta\setminus \omega$. The family joins analogous to (*),
which we continue to refer to as (*)  is the following family  of joins holding for all $i<\beta$, 
$b\in \B$, for all $\tau\in \sf adm$: 
${\sf s}_{\tau}{\sf c}_ib=\sum_{j\in \beta} {\sf s}_{\tau}{\sf s}_j^ix$.
And the family of joins in (**), which we also continue to denote by (**), take the form: 
$\sum^{\B}{\sf s}_{\tau}\At\D=1$, $\tau\in \sf adm$.

As above, one picks a principal ultrafilter $F$ of $\B$ generated by any atom below the given non--zero element, call it $a$--where $a\in \D$--required to be mapped into a non-zero element in 
a complete representation of $\D$. This is possible because $\B$ is atomic. Now $a\in F$, and like before this ultrafilter $F$  lies outside the nowhere dense sets 
in the Stone space of $\B$ corresponding to (*) and (**) as before. 
The required complete representation is induced by a complete representation of $\B$ 
defined via $F$ and $\sf adm$ restricted to $\D$.
In more detail,  proceeding like in \cite{mlq}: Let $\Gamma=\{i\in \beta:\exists j\in \omega: {\sf c}_i{\sf d}_{ij}\in F\}$.
Since $\c_i{\sf d}_{ij}=1$, then $\omega\subseteq \Gamma$. Furthermore the inclusion is proper, 
because for every $i\in \omega$, there is a $j\not\in \omega$ such that ${\sf d}_{ij}\in F$. 
Define the relation $\sim$ on $\Gamma$ via  $m\sim n\iff {\sf d}_{mn}\in F.$ 
Then $\sim$ is an equivalence relation because for all $i, j, k\in \omega$, ${\sf d}_{ii}=1\in F$, ${\sf d}_{ij}={\sf d}_{ji}$,
${\sf d}_{ik}\cdot {\sf d}_{kj}\leq {\sf d}_{lk}$ and filters are closed upwards.
Now we show that the required  representation will be a set algebra with top element a union of cartesian spaces (not necessarily disjoint) 
with base $M=\Gamma/\sim$.  
One defines the  homomorphism $f$ 
using the hitherto obtained perfect ultrafilter $F$ as follows:
For $\tau\in {}^{\omega}\Gamma$, 
such that $\rng(\tau)\subseteq \Gamma\setminus \omega$ (the last set is non--empty, because $\omega\subsetneq \Gamma$),
let  $\bar{\tau}: \alpha\to M$ be defined by $\bar{\tau}(i)=\tau(i)/\sim$  
and write $\tau^+$ for $\tau\cup Id_{\beta\setminus \omega}$. 
Then $\tau^+\in \sf adm$, because $\tau^+\upharpoonright \omega=\tau$, $\rng(\tau)\cap \omega=\emptyset,$ 
and $\tau^+(i)=i$ for all $i\in \beta\setminus \omega$. 

Let $V=\{\bar{\tau}\in {}^{\omega}M: \tau: \alpha\to \Gamma,  \rng(\tau)\cap \omega=\emptyset\}.$
Then $V\subseteq {}^{\omega}M$ is non--empty (because $\omega\subsetneq \Gamma$).
Now define $f$ with domain $D$ via: 
$d\mapsto \{\bar{\tau}\in V: \s_{\tau^+}^{\B}d\in F\}.$ 
Then $f$ is well defined, that is, whenever $\sigma, \tau\in {}^{\omega}\Gamma$ and 
$\tau(i)\setminus \sigma(i)$ for all $i\in \omega$, then for any $d\in \D$, 
$\s_{\tau^+}^{\B}d\in F\iff \s_{\sigma^+}^{\B}d\in F$.
The congruence relation just defined
on $\Gamma$  guarantees
that the hitherto defined homomorphism respects the diagonal elements.
For the other operations,
preservation of cylindrifiers
is guaranteed by (*) and complete  representability by (**). Finally $f(a)\neq 0$ because $a\in F$, so $Id\in f(a)$.
\end{proof}

Now we prove an omitting types theorem 
for a countable version of $\sf CPE_{\omega}$.
Consider the semigroup $\sf T$ generated by the set of 
transformations 
$$\{[i|j], [i,j], i, j\in \omega, \sf suc, \sf pred\}$$ defined on $\omega$ is a 
strongly rich subsemigroup of $(^\omega\omega, \circ)$,
where $\sf suc$ and $\sf pred$ are the successor and predecessor functions on $\omega$, respectively.
For a set $X$, let $\B(X)$ denote the Boolean set algebra $\wp(X), \cup, \cap, \sim)$. 
Let $\sf Gp_{\T}$ be the class of set algebras of the form 
$\langle\B(V), {\sf C}_i, {\sf D}_{ij}, {\sf S}_{\tau}\rangle_{i,j\in \omega, \tau\in \sf T},$
where $V\subseteq {}^{\omega}U,$  $V$ a non--empty union of cartesian spaces. 
Let $\Sigma$ be the set of equations defining cylindric polyadic equality algebras 
in \cite[Definition 6.3.7]{Fer2} restricted to the countable signature of ${\sf Gp}_{\T}$.
\begin{theorem}\label{three}  
If $\A\models \Sigma$ is countable 
and $\bold X=(X_i: i<\lambda)$, $\lambda<\mathfrak{p}$ is a family of subsets of
$\A$, such that $\prod X_i=0$ for all $i<\omega$, then there exists $\B\in \sf Gp_{\T}$ and an isomorphism $f:\A\to \B$ such that 
$\bigcap_{x\in X_i}f(x)=\emptyset$ for 
all $i<\omega$.
\end{theorem}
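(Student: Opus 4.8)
The plan is to run a Baire--category omitting--types argument directly in the Stone space of $\A$, exploiting that $\A$ is countable so that this space is Polish, and then to assemble the desired isomorphism from a separating family of homomorphisms into members of $\Gp_{\T}$ indexed by the non--zero elements of $\A$. Write $S$ for the set of Boolean ultrafilters of $\A$, topologised by declaring $N_b=\{F\in S: b\in F\}$ ($b\in \A$) to be basic clopen sets; since $\A$ is countable, $S$ is a compact, metrizable, zero--dimensional (hence Polish) space in which the basic clopens are exactly the $N_b$, and $N_b\subseteq N_c\iff b\leq c$. For each non--zero $a\in \A$ I would produce an ultrafilter $F_a\in N_a$ that simultaneously preserves all cylindrifier joins and refutes every type, and from $F_a$ build a set algebra $\C_a\in \Gp_{\T}$ together with a homomorphism $f_a\colon \A\to \C_a$ with $f_a(a)\neq 0$, mimicking the representation constructed in theorem \ref{two}: one sets $i\sim j\iff \diag ij\in F_a$ on the relevant index set $\Gamma_a$, takes $M_a=\Gamma_a/\!\sim$, and defines $f_a(d)=\{\bar\tau\in V_a: {\sf s}_{\tau}d\in F_a\}$ with $V_a\subseteq {}^{\omega}M_a$ a union of cartesian spaces, working with $\T$ (in particular with ${\sf suc}$ and ${\sf pred}$ supplying spare indices) in place of the spare dimensions of a dilation, so that no passage to an uncountable dilation as in lemma \ref{dilation} is needed and $S$ stays Polish.

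The heart of the choice of $F_a$ is the identification of two families of nowhere--dense subsets of $S$. For preservation of cylindrifiers, for each $i<\omega$, $b\in \A$ and admissible $\tau\in\T$ put $W_{i,b,\tau}=N_{{\sf s}_\tau \cyl i b}\setminus \bigcup_{j}N_{{\sf s}_\tau {\sf s}^i_j b}$; an ultrafilter lying outside every $W_{i,b,\tau}$ yields an $f_a$ that respects all cylindrifications, the admissible substitutions supplying the Henkin witnesses exactly as in theorem \ref{two}. For omission, for each $i<\lambda$ and admissible $\tau\in\T$ put $A_{i,\tau}=\bigcap_{x\in X_i}N_{{\sf s}_\tau x}$; an ultrafilter outside every $A_{i,\tau}$ gives an $f_a$ with $\bigcap_{x\in X_i}f_a(x)=\emptyset$ for all $i<\lambda$, since a point $\bar\tau$ of such an intersection would force $F_a\in A_{i,\tau}$. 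There are only countably many sets $W_{i,b,\tau}$ (because $\A$ and $\T$ are countable) and at most $\lambda\cdot|\T|=\lambda<\mathfrak{p}$ sets $A_{i,\tau}$. By the defining property of $\mathfrak{p}$ recorded before definition \ref{defott}, the union of all these nowhere--dense sets is meager, so its complement is comeager and therefore meets the non--empty open set $N_a$; this delivers the required $F_a\in N_a$.

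It remains to assemble the pieces. Set $f=\bigl(f_a: a\in \A\setminus\{0\}\bigr)\colon \A\to \bold P_{a\neq 0}\C_a$. Since $\Gp_{\T}$ is closed under products (the product unit is the disjoint union of the $V_a$, again a non--empty union of cartesian spaces closed under the transformations in $\T$), the target lies in $\Gp_{\T}$; the map $f$ is injective because $f_a(a)\neq 0$ for every non--zero $a$; and $f$ omits each type because intersections in a product are computed componentwise and each $f_a$ already omits them. Thus $f$ is the desired isomorphism onto its image in $\Gp_{\T}$ with $\bigcap_{x\in X_i}f(x)=\emptyset$ for all $i<\lambda$.

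The genuinely delicate point, which I expect to be the main obstacle, is the nowhere--density of the sets $A_{i,\tau}$ and $W_{i,b,\tau}$, and it is here that strong richness of $\T$ is indispensable, precisely because $\A$ is not assumed complete, so one cannot simply invoke ${\sf s}_\tau\prod X_i=\prod {\sf s}_\tau X_i=0$. Concretely, if some non--zero $b$ were below every ${\sf s}_\tau x$ ($x\in X_i$), then applying a left inverse substitution ${\sf s}_{\tau'}$ — available for admissible $\tau$ since $\tau$ is injective into the spare indices produced by ${\sf suc}$ — would give a non--zero lower bound ${\sf s}_{\tau'}b$ of $X_i$, contradicting $\prod X_i=0$; the witness sets $W_{i,b,\tau}$ are handled the same way using the join ${\sf s}_\tau\cyl i b=\sum_j {\sf s}_\tau {\sf s}^i_j b$, which is guaranteed by $\Sigma$ together with richness. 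Verifying these left inverses and the relevant joins entirely within the countable signature of $\Gp_{\T}$ is the main technical burden; everything else is the routine Baire/product packaging described above.
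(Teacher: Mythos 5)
Your overall architecture (Stone space, two families of ``bad'' sets, the cardinal $\mathfrak{p}$, an ultrafilter below the given non--zero element, a quotient base, and the final subdirect product over non--zero $a$) is the same as the paper's, but the place where you run the Baire argument --- the Stone space of $\A$ itself rather than of a dilation --- creates a genuine gap. Neither of your two nowhere--density claims is derivable from $\Sigma$, because both fail in actual set algebras in ${\sf Gp}_{\T}$, for which $\Sigma$ is sound. For the witness sets $W_{i,b,\tau}$: in the full algebra $\wp({}^{\omega}U)$ (a member of ${\sf Gp}_{\T}$, taking $V={}^{\omega}U$) let $b=\{(c,d,d,d,\ldots)\}$ with $c\neq d$. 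Then ${\sf s}^0_jb=\emptyset$ for every $j\geq 1$ and ${\sf s}^0_0b=b$, so $\sum_{j<\omega}{\sf s}^0_jb=b$, which is strictly below ${\sf c}_0b$; prefixing everything with ${\sf s}_{\sf suc}$ does not repair this. (This is the same phenomenon as in the example following theorem \ref{suf}.) The reason is structural: inside an $\omega$--dimensional algebra the candidates ${\sf s}^i_jb$, $j<\omega$, only produce witnesses whose values already occur in the given assignment, whereas ${\sf c}_i$ quantifies over the whole base; genuinely fresh witnesses require spare dimensions. Consequently $W_{i,b,\tau}$ can contain a non--empty clopen set (e.g. $N_{{\sf c}_0b\cdot -b}$ above), it is not nowhere dense, and the comeagerness argument collapses. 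The left--inverse trick for the omission sets $A_{i,\tau}$ is also broken: from $0\neq b\leq {\sf s}_{\sf suc}x$ for all $x\in X_i$ you do get ${\sf s}_{\sf pred}b\leq x$, but you cannot conclude ${\sf s}_{\sf pred}b\neq 0$. In a set algebra every sequence of the form $s\circ {\sf pred}$ lies in ${\sf d}_{01}$, so ${\sf s}_{\sf pred}b=0$ whenever $b\cdot {\sf d}_{01}=0$; and since ${\sf s}_{\sf suc}x$ places no constraint tying the $0$--th coordinate to the others, non--zero $b\leq{\sf s}_{\sf suc}x$ with $b\cdot {\sf d}_{01}=0$ exist generically. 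So nowhere--density of $A_{i,\tau}$ is not established either.

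What strong richness of $\T$ actually buys --- and this is the step your proof is missing --- is not left inverses inside $\A$ but a dilation: one proves (this is what is imported from \cite{mlq}) that $\A={\sf Nr}_{\omega}\B$ for a {\it countable} $(\omega+\omega)$--dimensional algebra $\B$ whose substitution operators come from a countable semigroup ${\sf S}$ of transformations of $\omega+\omega$. Then (i) $\A={\sf Nr}_{\omega}\B\subseteq_c \B$, so $\prod^{\B}X_i=0$; (ii) for admissible $\tau$ --- mapping $\omega$ into the genuinely spare dimensions --- ${\sf s}_{\tau}$ is a completely additive Boolean endomorphism of $\B$, whence $\prod {\sf s}_{\tau}X_i=0$; and (iii) the Henkin joins ${\sf s}_{\sigma}{\sf c}_ip=\sum_{j<\omega+\omega}{\sf s}_{\sigma}{\sf s}^i_jp$ do hold in $\B$, precisely because the witnesses $j$ range over the spare dimensions. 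The Baire argument then runs in the Stone space of $\B$, which is still Polish: the dilation used here is countable, so your stated motive for avoiding it (uncountability, as in lemma \ref{dilation}) does not apply --- the regular--cardinal dilations of that lemma are needed in theorems \ref{pa}--\ref{paaa}, not here. Once the dilation is in place, your definition of the base $\Gamma/\!\sim$, the maps $f_a$ via ${\sf s}_{\tau^+}$, and the product packaging over non--zero $a$ are exactly how the paper finishes; without it, the argument cannot be repaired as written.
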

\begin{proof}\cite{mlq} 
For brevity let $\alpha=\omega+\omega$.
By strong richness of $\T$, it can be proved that $\A=\Nr_{\omega}\B$ where  $\B$ is an $\alpha$--dimensional
dilation with substitution operators coming from a countable subsemigroup 
$\sf S\subseteq ({}^{\alpha}\alpha, \circ)$ \cite{mlq}. 
It suffices to show that for any non--zero $a\in \A$, there exist a countable $\D\in {\sf Gp}_{\T}$ and a
homomorphism (that is not necessarily injective)
$f:\A\to \D$,  such that $\bigcap_{x\in X_i} f(x)=\emptyset$ for all $i\in \omega$ 
and $f(a)\neq 0$. The required $\B$ will be a subdirect product of the $\C_a$s.

So fix non--zero $a\in \A$. For $\tau\in \sf S$, set $\dom(\tau)=\{i\in \alpha: \tau(i)\neq i\}$ and $\rng(\tau)=\{\tau(i): i\in \dom(\tau)\}$.  
For the sake of brevity, let $\alpha=\omega+\omega$. Let $\sf adm$ be  the set of admissible substitutions in $\sf S$, where 
now $\tau\in \sf adm$ if $\dom\tau\subseteq \omega$ and $\rng\tau\cap \omega=\emptyset$.  Since $\sf S$ is countable, we have
$|\sf adm|\leq\omega$; in fact it can be easily shown that $|\sf adm|=\omega$. 
Then we have for all $i< \alpha$, $p\in \B$ and $\sigma\in \sf adm$,
\begin{equation}\label{tarek1}
\begin{split}
\s_{\sigma}{\sf c}_{i}p=\sum_{j\in \alpha} \s_{\sigma}{\sf s}_j^ip.
\end{split}
\end{equation}
By $\A=\Nr_{\omega}\B$ we also have, for each $i<\omega$, $\prod^{\B}X_i=0$, since $\A$ is a complete subalgebra of $\B$.
Because substitutions are completely additive, we have
for all $\tau\in \sf adm$ and all $i<\lambda$,
\begin{equation}\label{t1}
\begin{split}
\prod {\sf s}_{\tau}^{\B}X_i=0.
\end{split}
\end{equation}
For better readability, for each $\tau\in \sf adm$, for each $i\in \omega$, let
$$X_{i,\tau}=\{{\sf s}_{\tau}x: x\in X_i\}.$$
Then by complete additivity, we have:
\begin{equation}\label{t2}\begin{split}
(\forall\tau\in {\sf adm})(\forall  i\in \lambda)\prod {}^{\B}X_{i,\tau}=0.
\end{split}
\end{equation}

Let $S$ be the Stone space of $\B$, whose underlying set consists of all Boolean ultrafilters of
$\B$ and for $b\in B$, let $N_b$ denote the clopen set consisting of all ultrafilters containing $b$. 
Then from (\ref{t1}) and (\ref{t2}), it follows that for $x\in \B,$ $j<\alpha,$ $i<\lambda$ and
$\tau\in \sf adm$, the sets
$$\bold G_{\tau,j,x}=N_{{\sf s}_{\tau}{\sf c}_jx}\setminus \bigcup_{i} N_{{\sf s}_{\tau}{\sf s}_i^jx}
\text { and } \bold H_{i,\tau}=\bigcap_{x\in X_i} N_{{\sf s}_{\tau}x}$$
are closed nowhere dense sets in $S$.
Also each $\bold H_{i,\tau}$ is closed and nowhere
dense.
Let $$\bold G=\bigcup_{\tau\in \sf adm}\bigcup_{i\in \alpha}\bigcup_{x\in B}\bold G_{\tau, i,x}
\text { and }\bold H=\bigcup_{i\in \lambda}\bigcup_{\tau\in  \sf adm}\bold H_{i,\tau}.$$
Then $\bold H$ is meager, since it is a countable union of nowhere dense sets.  
By the Baire Category theorem  for compact Hausdorff spaces,
we get that $X=S\smallsetminus \bold H\cup \bold G$ is dense in $S$,
since $\bold H\cup \bold G$ is meager, because $\bold G$ is meager, too, since
$\sf adm$, $\alpha$ and $\B$ are all countable.
Accordingly, let $F$ be an ultrafilter in $N_a\cap X$, then 
by its construction $F$ is a {\it perfect ultrafilter} \cite[pp.128]{Sayedneat}.
Let $\Gamma=\{i\in \alpha:\exists j\in \omega: {\sf c}_i{\sf d}_{ij}\in F\}$.
Since $\c_i{\sf d}_{ij}=1$, then $\omega\subseteq \Gamma$. Furthermore the inclusion is proper, 
because for every $i\in \omega$, there is a $j\in \alpha\setminus \omega$ such that ${\sf d}_{ij}\in F$. 

Define the relation $\sim$ on $\Gamma$ via  $m\sim n\iff {\sf d}_{mn}\in F.$ 
Then $\sim$ is an equivalence relation because for all $i, j, k\in \alpha$, ${\sf d}_{ii}=1\in F$, ${\sf d}_{ij}={\sf d}_{ji}$,
${\sf d}_{ik}\cdot {\sf d}_{kj}\leq {\sf d}_{lk}$ and filters are closed upwards.
Now we show that the required  representation will be a $\Gp_{\T}$ with base
$M=\Gamma/\sim$. One defines the  homomorphism $f$ like in theorem \ref{paa}.
Using the hitherto obtained perfect ultrafilter $F$ as follows:
For $\tau\in {}^{\omega}\Gamma$, 
such that $\rng(\tau)\subseteq \Gamma\setminus \omega$ (the last set is non--empty, because $\omega\subsetneq \Gamma$),
let  $\bar{\tau}: \omega\to M$ be defined by $\bar{\tau}(i)=\tau(i)/\sim$  
and write $\tau^+$ for $\tau\cup Id_{\alpha\setminus \omega}$. 
Then $\tau^+\in \sf adm$, because $\tau^+\upharpoonright \omega=\tau$, $\rng(\tau)\cap \omega=\emptyset,$ 
and $\tau^+(i)=i$ for all $i\in \alpha\setminus \omega$. 
Let $V=\{\bar{\tau}\in {}^{\omega}M: \tau: \omega\to \Gamma,  \rng(\tau)\cap \omega=\emptyset\}.$
Then $V\subseteq {}^{\omega}M$ is non--empty (because $\omega\subsetneq \Gamma$).
Now define $f$ with domain $\A$ via: 
$a\mapsto \{\bar{\tau}\in V: \s_{\tau^+}^{\B}a\in F\}.$ 
Then $f$ is well defined, that is, whenever $\sigma, \tau\in {}^{\omega}\Gamma$ and 
$\tau(i)\setminus \sigma(i)$ for all $i\in \omega$, then for any $x\in \A$, 
$\s_{\tau^+}^{\B}x\in F\iff \s_{\sigma^+}^{\B}x\in F$.
Furthermore $f(a)\neq 0$, since ${\sf s}_{Id}a=a\in F$ 
and $Id$ is clearly admissable.

The congruence relation just defined
on $\Gamma$  guarantees
that the hitherto defined homomorphism respects the diagonal elements.
As before, for the other operations,
preservation of cylindrifiers
is guaranteed by the condition that
$F\notin G_{\tau, i, p}$ for all $\tau\in {\sf adm}, i\in \alpha$
and all $p\in A$. 
For omitting the given family 
of non--principal types, we use that 
$F$ is outside $\bold H$, too.  This means (by definition) that for each $i<\lambda$ and each  $\tau\in  \sf adm$
there exists $x\in X_i$, such that
${\sf s}_{\tau}^{\B}x\notin F$. Let $i<\lambda$. If $\bar{\tau}\in V\cap \bigcap _{x\in X_i}f(x)$, 
then $\s_{\tau^+}^{\B}x\in F$ which  is impossible because $\tau^+\in \sf adm$. 
We have shown that for each $i<\omega$, $\bigcap_{x\in X_i}f(x)=\emptyset.$
\end{proof}

\section{An algebraic analysis}

Throughout this section $n$ is fixed to be a finite ordinal $>2$. 
Here we reformulate and elaborate on the results established in theorems \ref{main} and \ref{can} in purely algebraic terms.
The next theorem shows that 
for $2<m\leq l<n<\omega$,  $\Psi(l, n)$ (as formulated in theorem \ref{main}) is `infinitely stronger' than $\Psi(l, n)_f$ for $n\geq m+2$.

\begin{theorem}\label{flatsquare} Let $2<m<\omega$ and $n$ be finite $\geq m+2$. Then the variety of $\CA_m$s 
having $n$--flat representations is not finitely axiomatizable over the variety of $\CA_m$s 
having $n$--square representations.
\end{theorem}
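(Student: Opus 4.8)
The plan is to run the standard \Los--style argument for non--finite axiomatizability. Read ``$\CA_m$ with an $n$--flat representation'' as the variety $\bold S\Nr_m\CA_n$ (this is the content of the equivalences recorded before theorem \ref{can}), and ``$\CA_m$ with an $n$--square representation'' as the larger class $\mathbf{K}_{sq}$ of $\CA_m$s admitting an $n$--square representation; since every $n$--flat representation is $n$--square, $\bold S\Nr_m\CA_n\subseteq \mathbf{K}_{sq}$. Suppose for contradiction that there were a finite set $\Sigma$ of first order sentences with $\bold S\Nr_m\CA_n=\mathbf{K}_{sq}\cap \mathrm{Mod}(\Sigma)$, and put $\sigma=\bigwedge\Sigma$. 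It then suffices to produce a sequence $(\C_r:r<\omega)$ of $\CA_m$s and a non--principal ultrafilter $F$ on $\omega$ with each $\C_r\in \mathbf{K}_{sq}\setminus\bold S\Nr_m\CA_n$ but $\prod_r\C_r/F\in \bold S\Nr_m\CA_n$. Indeed, $\C_r\in\mathbf{K}_{sq}$ and $\C_r\notin \bold S\Nr_m\CA_n=\mathbf{K}_{sq}\cap\mathrm{Mod}(\Sigma)$ force $\C_r\models\neg\sigma$, so $\prod_r\C_r/F\models\neg\sigma$ by \Los's theorem, contradicting $\prod_r\C_r/F\in\bold S\Nr_m\CA_n\subseteq\mathrm{Mod}(\Sigma)$. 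Note this uses no closure property of $\mathbf{K}_{sq}$, only that it contains $\bold S\Nr_m\CA_n$ and the witnesses.

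The witnesses I would take to be finite rainbow $\CA_m$s obtained by finitising the construction in the first part of theorem \ref{can}. For each $r<\omega$ let $\C_r$ be the rainbow $\CA_m$ whose greens are $\{\g_i:1\le i<m-1\}\cup\{\g_0^i:i<r\}$ and whose reds form the finite linear order $\{\r_{kl}:k<l<r\}$, with the same forbidden triples as in theorem \ref{can}, in particular the order--preserving condition relating two greens and a red. As $\C_r$ is finite, Lemma \ref{n}(2) applies. Playing exactly as in theorem \ref{can}, \pa\ bombards \pe\ with cones of pairwise distinct green tints and, through the order--preserving rule, forces the red labels of the apices of successive cones to form a strictly descending chain in the linear order of reds; since that order has only $r$ elements, \pe\ cannot survive, so \pa\ has a \ws\ in $G^n_\omega(\At\C_r)$. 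By Lemma \ref{n}(2), $\C_r\notin\bold S\Nr_m\CA_n$, i.e. $\C_r$ has no $n$--flat representation. The point to record is that \pa\ needs on the order of $r$ rounds to exhaust the reds, so for each fixed $k$ he cannot win $G^n_k(\At\C_r)$ once $r>k$; equivalently \pe\ wins $G^n_k(\At\C_r)$ for all but finitely many $r$.

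On the square side I would show each $\C_r$ does admit an $n$--square representation, i.e. \pe\ wins the game characterising $n$--squareness (the relational/basis game underlying Lemma \ref{i}), for instance by exhibiting an $n$--dimensional relational basis for the finite algebra $\C_r$ directly. The heuristic is that \pa's descending--chain strategy is available only in the \emph{flat} game: it exploits the commutativity of cylindrifiers --- the defining clause $\exists x_i\exists x_j\phi\leftrightarrow\exists x_j\exists x_i\phi$ of flatness --- to transport a red label from one $2$--face to another, whereas in the square game no such transport is licensed and \pe\ can answer a single cylindrifier request locally. Isolating exactly this extra amalgamation power that flatness grants \pa, and verifying that \pe\ survives the square game on every $\C_r$, is the step I expect to be the main obstacle; it is the genuine combinatorial core separating squareness from flatness and is where most of the work lies. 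Granting it, $\C_r\in\mathbf{K}_{sq}\setminus\bold S\Nr_m\CA_n$ as required.

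For the ultraproduct, each statement ``\pe\ wins $G^n_k$'' is expressed by a first order Lyndon sentence $\rho_k$ in the signature of $\CA_m$, as in theorem \ref{iii}(5). By the second paragraph $\C_r\models\rho_k$ for cofinitely many $r$, so $\prod_r\C_r/F\models\rho_k$ for every $k$ by \Los's theorem; thus \pe\ wins $G^n_k(\At(\prod_r\C_r/F))$ for all $k<\omega$. This matches the fact that the reds of $\prod_r\C_r/F$ form a pseudofinite linear order, which is not well ordered and hence carries infinite descending chains, so that \pe\ is never forced off the board. Passing to a countable elementary subalgebra on which \pe\ wins the full $\omega$--rounded flat game (the downward \ls\ and elementary--chain argument of theorem \ref{iii}(5)), one converts this \ws\ into an $n$--flat representation --- the finite--dimensional analogue of the hyperbasis construction of Lemma \ref{i} --- placing that subalgebra in $\bold S\Nr_m\CA_n$. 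Since $\bold S\Nr_m\CA_n$ is a variety, hence elementary and closed under elementary equivalence, $\prod_r\C_r/F\in\bold S\Nr_m\CA_n$, completing the \Los\ argument. (An alternative realisation of the witnesses, more in the spirit of theorem \ref{OTT2}, would build them from finite relation algebras possessing an $n$--dimensional relational basis but no $n$--dimensional hyperbasis whose ultraproduct gains a hyperbasis, and transfer to $\CA_m$ through $\Ca\,\Mat_m$; either way the hypothesis $n\ge m+2$ is what leaves enough room for the cone/red interplay to pull flatness apart from squareness.)
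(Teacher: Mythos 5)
Your high--level architecture --- witnesses lying in the square class but outside $\bold S\Nr_m\CA_n$, whose ultraproduct lands back in $\bold S\Nr_m\CA_n$, followed by \Los\ --- is exactly the paper's, and in fact your closing parenthetical (finite relation algebras with an $n$--dimensional relational basis but no hyperbasis, transferred to $\CA_m$ via a $\Ca$--type construction) is essentially what the paper does: it takes $\mathfrak{C}_r=\Ca(H_m^{n+1}(\A(n,r),\omega))$ over the Hirsch--Hodkinson algebras $\A(n,r)$ of \cite[Definition 15.2]{HHbook}, obtains squareness of each $\mathfrak{C}_r$ for free from \pe's known winning strategies when $r\geq k^2$ (via relational bases, \cite[Theorem 12.25, Remark 15.13]{HHbook}), obtains non--flatness from the absence of an $(n+1)$--dimensional hyperbasis for $\A(n,r)$, and gets the ultraproduct step from $\Pi_{r}\A(n,r)/F\in \RRA$, which makes $\Pi_r\mathfrak{C}_r/F$ outright representable, i.e. in $\RCA_m\subseteq \bold S\Nr_m\CA_{n+1}$.

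Your main route through finite rainbow algebras, however, has genuine gaps. First, the property that makes the witnesses witnesses --- that each $\C_r$ admits an $n$--square representation --- is left unproved; you flag it yourself as ``the main obstacle'', but without it there is no theorem, and this is precisely what the paper's choice of witnesses is engineered to make automatic. Second, and fatally even granting squareness, the ultraproduct step is wrong: the Lyndon--type sentences you use express ``\pe\ wins $G^n_k$'', the plain cylindrifier game on networks with $n$ nodes, and winning all of these (then the $\omega$--rounded game on a countable elementary subalgebra) converts into an $n$--dimensional \emph{basis}, i.e. an $n$--\emph{square} representation --- not an $n$--flat one. Flatness requires a hyperbasis, equivalently a winning strategy in the hypernetwork game with amalgamation moves (\cite[Theorem 13.46]{HHbook}, the hyperbasis half of the equivalences, not the basis half); identifying the two is exactly the square/flat distinction the theorem is meant to separate. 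The paper avoids this trap by proving its ultraproduct is fully representable, which yields $n$--flatness trivially; for your rainbow witnesses no such argument is offered. Third, a smaller but real concern: your claim that \pa\ wins $G^n_\omega(\At\C_r)$ is doubtful at the low end $n=m+2$, since the rainbow strategies in theorem \ref{can} require dimension--plus--three nodes, and the ``$+2$'' case is exactly the one the paper cannot reach by rainbows (it is left conditional in \S7). So while your framework is sound, both legs of the witness construction and the return of the ultraproduct to the flat variety would need to be replaced, most naturally by the paper's hypernetwork--based algebras.
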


\begin{proof} 
Fix $2<m<n<\omega$. We use the relation algebras used in the proof of the first part of theorem \ref{OTT2}. 
Let $\mathfrak{C}(m,n,r)$ be the algebra $\Ca(\bold H)$ where $\bold H=H_m^{n+1}(\A(n,r), \omega)),$
is the $\CA_m$ atom structure consisting of all $n+1$--wide $m$--dimensional
wide $\omega$ hypernetworks \cite[Definition 12.21]{HHbook}
on $\A(n,r)$  as defined in \cite[Definition 15.2]{HHbook}.   Then $\mathfrak{C}(m, n, r)\in \CA_m$.

Write $\mathfrak{C}_r$ for   
$\mathfrak{C}(m,n,r)\in \CA_m$ as defined in \cite{HHbook} not to clutter notation. The parameters $m$ and $n$ will be clear from context.
Given positive $k$, then for any $r\geq  k^2$, \pe\ has a \ws\ in $G^k_r(\At(\A(n, r))$
\cite[Remark 15.13]{HHbook}. This implies using ultraproducts and an elementary chain argument that \pe\
has a \ws\ in the $\omega$--rounded game, in an elementary substructure of $\Pi_{r/U}\A(n,r)/F$,
hence the former is representable, and then so is the latter because
${\sf RRA}$ is a variety. 

Now \pe\ has a \ws\ in $G^k_{\omega}(\A(n,r))$ when $r\geq k^2$,
hence, $\A(n,r)$ embeds into a complete atomic relation algebra having
a $k$--dimensional relational basis by \cite[Theorem 12.25]{HHbook}. But this induces a \ws\ for \pe\ 
in  the  game $G^{k'}_{\omega}(\At(\mathfrak{C}_r))$ with $k'$ nodes and $\omega$ rounds, for $k'\geq k$, $k'\in \omega$ 
so that $\mathfrak{C}_r$ has a $k'$ --square represenation, when $r\geq k'{^2}$.
So if $n\geq m+2$, $k\geq 3$, and $r\geq k'^2$, then 
$\mathfrak{C}_r$ has an $n+1$--square representation, an $n$--flat one but does not have an $n+1$--flat one.
But $\Pi_{r/U} \mathfrak{C}_r/F\in {\sf RCA}_m(\supseteq \bold S\Nr_m\CA_{n+1}$)
by \cite[Corollaries 15.7, 5.10, Exercise 2, pp. 484, Remark 15.13]{HHbook}
and we are done.
\end{proof}
Observe that in the last part of the proof of theorem \ref{can}, we actually showed that $\bold S\Nr_n\CA_{n+3}$ is not atom--canonical. 
It is known that $\bold S \Nr_n\CA_{n+1}$ is atom--canonical because it admits a finite Sahlqvist (equational) 
axiomatization \cite{gn}.  The next theorem is  conditional covering 
also the only remaining 
case $\bold S\Nr_n\CA_{n+2}$:
\begin{theorem} Let $2<n<m<\omega$. If there exists a finite relation algebra $\R$ having $n$--blur $(J, E)$ and no 
infinite $m$--dimension hyperbasis, then the variety $\bold S\Nr_n\CA_m$ is not atom--canonical.
\end{theorem}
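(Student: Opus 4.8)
The plan is to adapt the blow up and blur construction used for the implication $(1)\implies(2)$ of theorem \ref{main}, but now drawing on the weaker hypothesis that $(J,E)$ is merely an $n$--blur rather than a strong $n$--blur. First I would blow up and blur $\R$ as in \cite{ANT}: splitting each atom into $\omega$ copies yields the infinite atom structure ${\bf At}=\omega\times\At\R\times J$, with composition re--defined and synchronised by the index blur $E$, and the resulting term algebra $\Bb(\R,J,E)$ is representable through the finitely many non--principal ultrafilters supplied by the blurs in $J$. Since $(J,E)$ is an $n$--blur, the existential safety condition in definition \ref{strongblur}(iv) is exactly what guarantees that ${\bf At}$ carries an $n$--dimensional cylindric basis, namely ${\sf Mat}_n({\bf At})$. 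One thereby obtains an algebra $\A=\Bb_n(\R,J,E)$ with $\Tm{\sf Mat}_n({\bf At})\subseteq\A\subseteq\Cm{\sf Mat}_n({\bf At})$ which is atomic and representable, so that $\A\in\RCA_n=\bold S\Nr_n\CA_\omega\subseteq\bold S\Nr_n\CA_m$. Crucially, and in contrast with theorem \ref{main}, I do \emph{not} need the neat reduct property $\A\cong\Nr_n\Bb_l(\R,J,E)$ (which required the blur to be \emph{strong}); plain representability of $\A$ is all that is required here.

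It then remains to show that the \de\ completion $\Cm\At\A$ lies outside $\bold S\Nr_n\CA_m$, which together with $\A\in\bold S\Nr_n\CA_m$ atomic furnishes the desired failure of atom--canonicity in the sense of definition \ref{canonical}. Using the first partition $P_1$ exactly as in \cite[p.73]{ANT}, $\R$ embeds into the complex algebra $\Cm{\bf At}$ via infinite joins, and this lifts to an embedding of $\R$ into the relation algebra reduct $\Ra\Cm\At\A=\Ra\Cm{\sf Mat}_n({\bf At})$. I would then argue by contradiction: suppose $\Cm\At\A\in\bold S\Nr_n\CA_m$. Because $\Cm\At\A$ is complete, membership in $\bold S\Nr_n\CA_m$ is equivalent to the existence of an infinite $m$--flat representation $\M$ of $\Cm\At\A$, as recorded in the introduction. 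Restricting this representation along the embedding $\R\hookrightarrow\Ra\Cm\At\A$ yields an $m$--flat representation of $\R$ with base $\M$. By the equivalence between infinite $m$--dimensional hyperbases of $\R^+$ and infinite $m$--flat representations of $\R$ recorded in lemma \ref{i}, and since $\R$ is finite so that $\R=\R^+$, this produces an infinite $m$--dimensional hyperbasis for $\R$, contradicting the hypothesis. Hence $\Cm\At\A\notin\bold S\Nr_n\CA_m$, completing the argument.

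The hard part will be the passage, inside the contradiction step, from an $m$--flat representation of the \emph{cylindric} algebra $\Cm\At\A$ to an $m$--flat representation of the \emph{relation} algebra $\R$ sitting inside $\Ra\Cm\At\A$: this is precisely where the $\CA$--side notion of $m$--flatness must be reconciled with the $\RA$--side notion that feeds lemma \ref{i}, and it rests on the standard but delicate Hirsch--Hodkinson bookkeeping relating $m$--dimensional (hyper)bases of $\R^+$ to $m$--flat representations of $\R$ and of algebras constructed over its atom structure. A secondary point requiring care is the representability of $\Bb(\R,J,E)$, and hence of $\A$: it genuinely uses that the principal ultrafilters alone would force a complete representation (and so a representation of $\Cm{\bf At}$, which is impossible), so that the non--principal ultrafilters coming from the blurs are indispensable. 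For both of these I would quote the corresponding verifications from \cite{ANT} and \cite{HHbook} rather than reproduce them.
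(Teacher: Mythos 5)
Your proposal is correct and follows essentially the same route as the paper: the paper's proof also takes $\A=\Bb_n(\R,J,E)$, notes it is representable (for which the mere $n$--blur suffices, so $\A\in\RCA_n\subseteq\bold S\Nr_n\CA_m$), and then reuses the argument of theorem \ref{main}$(1)\implies(2)$ — the embedding $\R\hookrightarrow\Bb(\R,J,E)\hookrightarrow\Ra\Cm\At\A$ together with lemma \ref{i} turns an $m$--flat representation of $\Cm\At\A$ into an infinite $m$--dimensional hyperbasis for $\R$, contradicting the hypothesis. Your explicit observation that the neat reduct property (and hence strongness of the blur) is not needed is exactly the point implicit in the paper's appeal to only the representability half of that argument.
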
 
\begin{proof} The algebra $\Bb_n(\R, J, E)(\in \CA_n)$ will be representable and using the same argument in the first item of theorem \ref{main} 
its \de\  completion will not be in $\bold S\Nr_n{\sf CA}_m$. This proves the required, since $\RCA_n\subseteq \bold S\Nr_n\CA_m$.
\end{proof}

In theorem  \ref{can} we showed that there is a countable $\RCA_n$ whose \de\ completion is outside $\bold S\Nr_n\CA_{n+3}$. 
More concisely, we showed that, although $\RCA_n\subseteq \bold S\Nr_n\CA_m$ for any $m\geq n$, 
$\Cm(\At\RCA_n)\nsubseteq \bold S\Nr_n\CA_{n+3}$.
In theorem \ref{main}, we showed that for $2<n\leq l<m\leq \omega$, $\Psi(l, m)_f$ holds if there exists $\A\in \Nr_n\CA_l\cap \RCA_n$ 
such that $\Cm\At\A$ does not have an $m$--flat representation, equivalently, $\Cm\At\A\notin \bold S\Nr_n\CA_m$.
This motivates the following (algebraic) definition, where we consider various variations on the operation of forming 
subalgebras by taking {\it restricted forms} thereof applied to the class $\Nr_n\CA_m$.

Recall that $\bold S_c$ denotes the operation of forming complete sublgebras. 
We let $\bold S_d$ denote the operation of forming {\it dense subalgebras}, and $\bold I$ denote the operation of forming 
isomorphic images.
\begin{definition} Let $2<n\leq l\leq m\leq \omega$. Let $\bold O\in \{\bold S, \bold S_d, \bold S_c, \bold I\}$.
\begin{enumarab}
\item An atomic algebra $\A\in \sf CA_n$ has the {\it strong $\bold O$ neat embedding property up to $m$}, if $\Cm\At\A\in \bold O\Nr_n\CA_m.$
\item An atomic algebra $\A\in {\sf RCA}_n$ is {\it strongly representable up to $l$ and $m$} if $\A\in \Nr_n\CA_l$ 
and $\Cm\At\A\in \bold S\Nr_n\CA_m$.
\end{enumarab}
\end{definition}  

Let $2<n\leq l<m\leq \omega$.
We denote the class of $\CA_n$s having the strong $\bold O$ neat embedding property up to $m$ by ${\sf SNPCA}_{n,m}^{\bold O}$,
and we let ${\sf RCA}_{n,m}^{\bold O}:={\sf SNPCA}_{n,m}^{\bold O}\cap {\sf RCA}_n$.
We denote the class of strongly representable $\CA_n$s up to $l$ and $m$ by ${\sf RCA}_n^{l,m}$.
Recall that ${\sf CRCA}_n$ denotes the class of completely representable $\CA_n$s with respect to generalized $n$--dimensional 
cartesian spaces. 
For a class $\bold K$ of $\sf BAO$s, $\bold K\cap {\sf Count}$ 
denotes the class of countable algebras in $\bold K$, $\bold K\cap {\sf Cat}$ denotes the class of algebras having at most countably many atoms in $\bold K$ 
and recall that $\bold K\cap \bf At$ denotes the class of atomic algebras in $\bold K$.
\begin{theorem}\label{main2} Let $2<n\leq l<m\leq \omega$ and $\bold O\in \{\bf S, S_c, S_d, I\}$. 
Then the following hold:
\begin{enumerate}
\item ${\sf RCA}_{n,m}^{\bold O}\subseteq {\sf RCA}_{n,l}^{\bold O}$ 
and  ${\sf RCA}_{n,l}^{\bf I}\subseteq {\sf RCA}_{n, l}^{\bold S_d}\subseteq {\sf RCA}_{n, l}^{\bold S_c}\subseteq {\sf RCA}_{n,l}^{\bold S}$. 
The  last inclusion is proper for $l\geq n+3$,
\item For $\bold O\in \{\bf S, S_c, S_d\}$, ${\sf SNPCA}_{n,l}^{\bold O}\subseteq \bold O\Nr_n\CA_l$
and for $\bold O=\bold S$, the inclusion is proper for $l\geq n+3$. But ${\sf SNPCA}_{n,l}^{\bold I}\nsubseteq \Nr_n\CA_l,$ 
\item If $\A$ is finite, then $\A\in {\sf SNPCA}_{n,l}^{\bold O}\iff \A\in \bold O\Nr_n\CA_l$ 
and $\A\in {\sf RCA}_{n, l}^{\bold O}\iff \A\in \RCA_n\cap \bold O\Nr_n\CA_l$.
Furthermore, for any positive $k$, ${\sf SNPCA}_{n, n+k+1}^{\bold O}\subsetneq {\sf SNPCA}_{n, {n+k}}^{\bold O},$
and ${\sf SNPCA}_{n,\omega}^{\bold O}\subsetneq \RCA_n,$
\item  $(\exists \A\in \RCA_n\cap {\bf At}\sim {\sf SNPCA}_{n,l}^{\bold S})\implies \bold S{\sf Nr}_n\CA_k$ is not 
atom--canonical for all $k\geq l$. In particular, $\bold S{\sf Nr}_n\CA_k$ is not atom--canonical for all $k\geq n+3$,
\item If $\bold S\Nr_n\CA_l$ is atom--canonical $\implies$ $\RCA_{n,l}^{\bold S}$ is first order definable. 
There exists a finite $k>n+1$, such that ${\sf RCA}_{n, k}^{\bold S}$ is not first order definable, 
\item For $\bold O\in \{\bold I, \bold S_c, \bold S_d\}$, $l\geq n+3$ and $\bold K$ any class 
such that $\RCA_{n, l}^{\bold O}\subseteq \bold K\subseteq {\sf SNPCA}_{n, l}^{\bold O}$, $\bold K$ 
is not first order definable,
\item Any class between $\Nr_n\CA_{\omega}\cap {\sf CRCA}_n$ and $\bold S_c\Nr_n\CA_{n+3}$ 
is not first order definable.  In particular, ${\sf CRCA}_n$ is not first order definable \cite{HH},
and $\Nr_n\CA_{n+k}$ for any $k\geq 3$ is not first order definable \cite[Theorem 5.1.4]{Sayedneat},

\item The classes  ${\sf CRCA}_n$ and ${\sf Nr}_n\CA_m$ for $n<m$ are pseudo--elementary but not elementary, nor pseudo--universal. 
Furthermore, their elementary 
theory is recursively enumerable. 
\item $\RCA_n^{l, \omega}\cap {\sf Count}\neq \emptyset$ 
and  ${\sf RCA}_n^{\omega, \omega}\cap {\sf Cat}\cap {\bf At}=\emptyset$.
\end{enumerate}
\end{theorem}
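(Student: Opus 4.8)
The plan is to reduce all the inclusions to two observations. First, for $n\le l<m$ one has $\Nr_n\CA_m\subseteq\Nr_n\CA_l$, because $\Nr_n\B=\Nr_n\Rd_l\B$ for $\B\in\CA_m$; applying the monotone operator $\bold O$ and intersecting with $\RCA_n$ gives ${\sf RCA}_{n,m}^{\bold O}\subseteq{\sf RCA}_{n,l}^{\bold O}$. Second, as operators $\bold I\subseteq\bold S_d\subseteq\bold S_c\subseteq\bold S$, the only nontrivial link being that a dense subalgebra is a complete subalgebra; this yields the displayed chain, whose last inclusion I would show proper for $l\ge n+3$ using a rainbow completion that has an $l$--flat but no \emph{complete} $l$--flat representation (so it sits in $\bold S\Nr_n\CA_l$ but not $\bold S_c\Nr_n\CA_l$). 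For (2) I exploit $\A\subseteq_d\Cm\At\A$: for $\bold O=\bold S$ this is immediate, and for $\bold O\in\{\bold S_c,\bold S_d\}$ I compose the density/completeness through $\Cm\At\A$ using transitivity of $\subseteq_c$ (Lemma~\ref{join2}); properness for $\bold O=\bold S$ and $l\ge n+3$ is exactly the failure of atom--canonicity of $\bold S\Nr_n\CA_l$ from theorem~\ref{can}, while ${\sf SNPCA}_{n,l}^{\bold I}\not\subseteq\Nr_n\CA_l$ is witnessed by the algebra $\B$ of theorem~\ref{iii}(4), dense in $\A=\Cm\At\B\in\Nr_n\CA_l$ yet with $\B\notin\Nr_n\CA_l$. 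For (3) the finite collapse $\A\cong\Cm\At\A$ makes all four conditions coincide, and the strict descending chains ${\sf SNPCA}_{n,n+k+1}^{\bold O}\subsetneq{\sf SNPCA}_{n,n+k}^{\bold O}$ and ${\sf SNPCA}_{n,\omega}^{\bold O}\subsetneq\RCA_n$ are read off from the non--finite--axiomatizability hierarchy (theorems~\ref{OTT2}, \ref{flatsquare}) and from weakly--but--not--strongly representable atom structures (theorem~\ref{can}).

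\textbf{Atom--canonicity (4)--(5).} For (4) I unpack the hypothesis: an atomic $\A\in\RCA_n$ with $\A\notin{\sf SNPCA}_{n,l}^{\bold S}$ means precisely $\Cm\At\A\notin\bold S\Nr_n\CA_l$, whereas $\A\in\RCA_n\subseteq\bold S\Nr_n\CA_k$ for all $k$; since $\bold S\Nr_n\CA_k\subseteq\bold S\Nr_n\CA_l$ for $k\ge l$, the algebra $\A$ is an atomic member of $\bold S\Nr_n\CA_k$ whose \de\ completion leaves it, i.e.\ $\bold S\Nr_n\CA_k$ is not atom--canonical, and theorem~\ref{can} supplies such an $\A$ at $l=n+3$. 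For the first half of (5), if $\bold S\Nr_n\CA_l$ is atom--canonical then $\Cm\At\A\in\bold S\Nr_n\CA_l$ holds automatically for every atomic $\A\in\RCA_n$, so ${\sf RCA}_{n,l}^{\bold S}=\RCA_n\cap{\bf At}$, which is first order definable since $\RCA_n$ is a variety and atomicity is elementary. The existence of a finite $k>n+1$ with ${\sf RCA}_{n,k}^{\bold S}$ non--definable is then the complementary phenomenon at the non--atom--canonical level $k=n+3$, established by the same ultraproduct family used below.

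\textbf{Non--definability and pseudo--elementarity (6)--(8).} Here lies the real work, and the strategy is uniform. For the sandwich statements (6) and (7) I would produce a \emph{single} sequence $(\A_r)_{r<\omega}$ of atomic algebras lying \emph{outside} the largest class of the sandwich (so $\A_r\notin{\sf SNPCA}_{n,l}^{\bold O}$, resp.\ $\A_r\notin\bold S_c\Nr_n\CA_{n+3}$) whose ultraproduct $\prod_r\A_r/U$ lies \emph{inside} the smallest class (so $\in{\sf RCA}_{n,l}^{\bold O}$, resp.\ $\in\Nr_n\CA_\omega\cap{\sf CRCA}_n$); since an elementary class and its complement are closed under ultraproducts, no intermediate $\bold K$ can be elementary. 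The required sequence is the blow--up--and--blur family $\A_l=\Nr_n\Bb_l(\cdot)$ of theorem~\ref{iii}(5) (and its rainbow cousins): on each factor $\forall$ wins the relevant game so that $\Cm\At\A_r$ escapes the target neat--reduct class (Lemma~\ref{n}), while $\prod_r\At\A_r/U$ is a completely representable atom structure and $\Cm[\prod_r\At\A_r/U]=\prod_r\Cm\At\A_r/U$ is a completely representable genuine $\omega$--neat reduct. Note $\bold S$ is excluded from (6) exactly because ${\sf RCA}_{n,l}^{\bold S}$ can be elementary by (5). For (8), pseudo--elementarity of ${\sf CRCA}_n$ and $\Nr_n\CA_m$ comes from a two--sorted first order description (one sort the algebra, one the $\omega$-- or $m$--dilation, with axioms forcing the first to be the neat reduct and, for ${\sf CRCA}_n$, coding a complete representation on a sort of points); recursive enumerability of their theories is the standard consequence of a recursive such description, non--elementarity is item (7) (equivalently the elementary--equivalence witness $\C\equiv\B$ of theorem~\ref{iii}(5)), and failure of pseudo--universality follows as in \cite{Sayedneat} from non--closure under subalgebras ($\bold S\Nr_n\CA_m\supsetneq\Nr_n\CA_m$, and Lemma~\ref{join} gives ${\sf CRCA}_n$ closed only under \emph{complete} subalgebras).

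\textbf{Boundary computation (9) and the main obstacle.} Nonemptiness of ${\sf RCA}_n^{l,\omega}\cap{\sf Count}$ for finite $l$ I would witness by a finite neat reduct $\A=\Nr_n\B$ with $\B\in\CA_l$ finite and representable: then $\A\in\Nr_n\CA_l\cap\RCA_n$ and $\Cm\At\A=\A\in\RCA_n=\bold S\Nr_n\CA_\omega$. The emptiness of ${\sf RCA}_n^{\omega,\omega}\cap{\sf Cat}\cap{\bf At}$ I derive from theorem~\ref{iii}(3): any atomic algebra in $\Nr_n\CA_\omega\subseteq\bold S_c\Nr_n\CA_\omega$ with at most countably many atoms is already completely representable, so at the boundary $l=m=\omega$ the two strong--representability conditions collapse to full complete representability and the witnessing gap that defines ${\sf RCA}_n^{\omega,\omega}$ vanishes -- the algebraic shadow of ${\sf VT}(\omega,\omega)$ holding. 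The principal difficulty throughout is the uniform sandwich in (6)--(7): one must exhibit one ultraproduct--closed family straddling the \emph{entire} interval for all admissible $\bold O$ at once, simultaneously controlling the game--theoretic failure $\Cm\At\A_r\notin\bold S\Nr_n\CA_l$ on each factor and the complete representability of the limiting atom structure, with the $\bold I$ case the most delicate since it requires the limit completion to be a \emph{genuine} neat reduct rather than merely a complete or dense subalgebra of one.
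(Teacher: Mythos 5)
Items (1)--(4), your streamlined first half of (5), and the outline of (8) are essentially fine; indeed your observation that atom--canonicity of $\bold S\Nr_n\CA_l$ forces ${\sf RCA}_{n,l}^{\bold S}=\RCA_n\cap{\bf At}$, which is elementary outright, is more direct than the paper's detour through canonicity and the elementary class of atom structures. The genuine gap is the uniform mechanism you propose for (6)--(7), and it is a gap of logic, not of bookkeeping. You take a sequence $(\A_r)$ lying \emph{outside} the largest class of the sandwich whose ultraproduct lies \emph{inside} the smallest, and conclude non--elementarity of every intermediate $\bold K$ from the claim that ``an elementary class and its complement are closed under ultraproducts''. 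That claim is false: closure of the complement under ultraproducts characterizes \emph{finitely axiomatizable} classes, not elementary ones (fields of characteristic $0$ form an elementary class that is an ultraproduct--limit of fields of prime characteristic). An outside--to--inside ultraproduct refutes only finite axiomatizability --- which is precisely how the paper uses the blow--up--and--blur family in corollary \ref{main3}. To kill elementarity one needs the opposite direction (factors inside, ultraproduct outside, as in the Monk--algebra argument behind \cite[Corollary 3.7.2]{HHbook2}) or an elementarily equivalent pair straddling the interval. The paper does the latter: for (6) it uses the single rainbow pair $\C\equiv\B$, where $\C=\Cm\At\C\notin\bold S_c\Nr_n\CA_{n+3}$ (hence $\C\notin{\sf SNPCA}_{n,l}^{\bold O}$ for $\bold O\in\{\bold I,\bold S_d,\bold S_c\}$ and $l\geq n+3$), and $\B$ is manufactured by ultrapowers and an elementary chain from \pe's winning strategies in the hypernetwork game $\bold H_k$, engineered so that $\Cm\At\B\in\Nr_n\CA_\omega$, i.e.\ $\B\in\RCA_{n,l}^{\bold I}$ --- exactly the delicate $\bold I$--requirement you flagged but left unresolved. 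For (7) it combines (6) with a second equivalent pair: the quantifier--elimination construction of \cite[Theorem 5.1.4]{Sayedneat} gives $\A\equiv\B$ with $\A\in\Nr_n\CA_\omega\cap{\sf CRCA}_n$, and a cardinality/density argument in a signature of size $2^{2^{\omega}}$ shows $\B\notin\bold S_d\Nr_n\CA_\omega\cap{\bf At}$.

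Even granting your mechanism, your chosen family does not satisfy the ``outside'' requirement: for $\A_r=\Nr_n\Bb_r(\mathfrak{E}_k(2,3),J,E)$ one only knows $\Cm\At\A_r\notin\RCA_n$ (Ramsey kills infinite representations of the Maddux algebras), and this gives neither $\Cm\At\A_r\notin\bold S_c\Nr_n\CA_l$ nor $\notin\Nr_n\CA_l$ at a finite level $l\geq n+3$; whether such algebras lack infinite finite--dimensional hyperbases is precisely the open hypothesis on which theorem \ref{OTT2}(2) is conditional. The same conflation undermines your second half of (5): non--atom--canonicity of $\bold S\Nr_n\CA_{n+3}$ does \emph{not} yield non--elementarity of ${\sf RCA}_{n,n+3}^{\bold S}$, since the implication established in (5) runs only from atom--canonicity to definability. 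The paper's argument is non--constructive: ${\sf RCA}_{n,\omega}^{\bold S}=\bigcap_{k<\omega}{\sf RCA}_{n,k}^{\bold S}$ is non--elementary by \cite[Corollary 3.7.2]{HHbook2}, an intersection of countably many elementary classes is elementary, so some finite $k$ must fail, and $k>n+1$ because $\bold S\Nr_n\CA_{n+1}$ is atom--canonical \cite{gn}; no specific $k$ is exhibited. Finally, your item (9) is internally inconsistent: in the first half you read ${\sf RCA}_n^{l,\omega}$ as requiring $\Cm\At\A\in\RCA_n$ (whence your trivial finite witness), but in the second half as the gap class with $\Cm\At\A\notin\RCA_n$. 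Under the membership reading the second claim is false (any finite set algebra in $\Nr_n\CA_\omega$ qualifies); under the gap reading --- the one the paper's own proof of emptiness actually uses, via complete representability from theorem \ref{iii} --- the first half requires the non--trivial Maddux witness of $\Psi(l,\omega)$ from theorem \ref{can}, not a finite algebra.
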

\begin{proof}
The inclusions in the first item follows from the definition and the strictness of the 
last inclusion in this item is witnessed by the algebra $\C$ used in the first part of theorem \ref{can},  since 
$\C$ 
satisfies $\C=\Cm\At\C\in \RCA_n$ but $\C\notin 
\bold S_c\Nr_n\CA_l$ for $l\geq n+3$.

Let 
$\bold O\in \{\bf S, S_c, S_d\}$. 
If $\Cm\At\A\in \bold O\Nr_n\CA_l$, then $\A\subseteq_d \Cm\At\A$, so $\A\in \bold S_d\bold O\Nr_n\CA_l\subseteq \bold O\Nr_n\CA_l$.
This proves the first part of item (2). 
The strictness of the last inclusion follows from the third part in theorem \ref{can} on squareness, since the atomic countable algebra $\A$ 
constructed in {\it op.cit} is in $\RCA_n$, but $\Cm\At\A\notin \bold S\Nr_n\CA_{l}$ for any $l\geq n+3$.

For the last non--inclusion in item (2), we use the same example in the third item of theorem \ref{iii} restricting our attention to dimension $n$. 
Let $V={}^n\Q$ and let ${\A}$ be the ${\sf Cs}_n$ with universe $\wp(V)$.
Then $\A\in {\sf Nr}_{n}\CA_{\omega}$.
Like before, let $y$ denote the $n$--ary relation
$\{s\in V: s_0+1=\sum_{i>0} s_i\}$ and let ${\B}=\Sg^{\A}(\{y\}\cup\{\{s\}: s\in V\}).$
Now $\B$ and $\A$ are in ${\sf Cs}_n$ and they share the same atom structure, namely, the singletons, 
so $\B$ is a dense sublgebra of $\A$,  and clearly $\Cm\At\B=\A(\in \Nr_n\CA_{\omega}$). As  proved in \cite{SL},  
$\B\notin {\bf  El}{\sf Nr}_{n}{\sf CA}_{n+1}$, so 
$\B\notin \Nr_n\CA_{n+1}(\supseteq \Nr_n\CA_l)$. But $\Cm\At\B\in \Nr_n\CA_{\omega}$, 
hence $\B\in {\sf RCA}_{n, l}^{\bold I}$.  We have shown that $\B\in {\sf RCA}_{n, l}^{\bold I}\sim \Nr_n\CA_l$, and we are 
through with the last required in item (2).

Item (3) follows by definition observing that if $\A$ is finite then 
$\A=\Cm\At\A$. The strictness of the first inclusion follows from the construction in \cite{t} 
where it shown that for an positive $k$, 
there is a {\it finite algebra $\A$} in 
$\bold \Nr_n\CA_{n+k}\sim \bold S\Nr_n\CA_{n+k+1}$.  
The inclusion ${\sf SNPCA}_{n,\omega}^{\bold O}\subseteq \RCA_n$ holds because if $\B\in {\sf SNPCA}_{n,\omega}^{\bold O}$, then 
$\B\subseteq \Cm\At\B\in \bold O\Nr_n\CA_{\omega}\subseteq \RCA_n$.
The $\A$ used in theorem \ref{can} witnesses  the strictness of the last inclusion.

Item (4) follows from the definition and the construction used in the third part of theorem \ref{can}.

Item (5) follows from that ${\bf S}\Nr_n\CA_l$ is canonical. So if it is atom--canonical too, 
then $\At(\bold S\Nr_n\CA_{l})=\{\F: \Cm\F\in \bold S\Nr_n\CA_l\},$ 
the former class is elementary \cite[Theorem 2.84]{HHbook}, and the last class is elementray $\iff \RCA_n^l$ is 
elementary.
Non--elementarity follows from  \cite[Corollary 3.7.2]{HHbook2} where it is proved that ${\sf RCA}_n^{\omega}$ is not elementary, together with the fact 
that  $\bigcap_{n<k<\omega}{\bold S}\Nr_n\CA_{k}=\RCA_n$. That $k$ has to be strictly greater than $n+1$, follows because 
$\bold S\Nr_n\CA_{n+1}$ is atom--canonical since it admits a finite Sahlqvist equational axiomatization \cite{gn}.  

For item (6), we take the rainbow algebra $\C$ based on the ordered structure $\Z$ and $\N$
used in the first item of theorem \ref{can}.
Then as proved therein $\C\notin \bold S_c\Nr_n\CA_{n+3}.$ 
But in \cite{mlq}, it is shown that $\C\equiv \B$ for some countable $\B\in \bold S_c\Nr_n\CA_{\omega}\cap {\sf CRCA}_n$.
This is proved by showing that \pe\ has a \ws\ in $G_k(\At\C)$ for all $k\in \omega$, hence using ultrapowers followed by an elementary chain argument
(like the argument used in the proof of item (5) of theorem \ref{iii}), we get that $\C\equiv \B$, and \pe\ has a \ws\ in $G_{\omega}(\At\B)$, hence 
$\B\in {\sf CRCA}_n\subseteq \bold S_c(\Nr_n\CA_{\omega}\cap \bf At)$.
This can be strengthened by showing that $\B$ can be chosen so that  $\B\in \bold S_d\Nr_n\CA_{\omega}\cap {\sf CRCA}_n$.
The idea is to define a $k$--rounded atomic game $\bold H_k$ for $k\leq \omega$, played on so--called {\it $\lambda$--neat hypernetworks} on an atom structure. 
This game besides the standard 
cylindrifier move (modified to $\lambda$--neat hypernetworks),  
offers \pa\ two new amalgamation moves. We omit the highly technical definitions.
One shows that \pe\ can still win the stronger $\bold H_k(\At\C)$ for all $k<\omega$, hence using 
ultrapowers followed by an elementary chain argument,  \pe\ has a \ws\ in $\bold H_{\omega}(\alpha)$ for a countable atom structure $\alpha$, such
that $\At\C\equiv \alpha$. 
The game $\bold H$ is designed so that the \ws\ of \pe\ in $\bold H_{\omega}(\alpha)$ implies that $\alpha\in \At\Nr_n\CA_{\omega}$ and 
that $\Cm\alpha\in \Nr_n\CA_{\omega}$.  Let $\B=\Tm\alpha$. Then $\B\subseteq_d \Cm\alpha\in \bold {\sf Nr}_n\CA_{\omega}$, $\B\in {\sf CRCA}_n$
and   $\C\notin \bold S_c{\sf Nr}_n\CA_{n+3}$. Hence 
$\C\notin {\sf SNPCA}_{n, m}^{\bold O}$, $\C\equiv \B$ 
and $\B\in \RCA_{n, m}^{\bold I}$ because $\Cm\At\B\in \Nr_n\CA_{\omega}$.

In item (6) we have shown that  any $\bold K$ between ${\sf CRCA}_n\cap \bold S_d{\sf Nr}_n\CA_{\omega}$ and $\bold S_c{\sf Nr}_n\CA_{n+3}$, $\bold K$ is not elementary: 
To prove item (7), 
we first need to slighty modify the construction in \cite[Lemma 5.1.3, Theorem 5.1.4]{Sayedneat}. The algebras $\A$ and $\B$ constructed in {\it op.cit} satisfy that
$\A\in {\sf Nr}_n\CA_{\omega}$, $\B\notin {\sf Nr}_n\CA_{n+1}$ and $\A\equiv \B$.
As they stand, $\A$ and $\B$ are not atomic, but they it 
can be  fixed that they are to be so giving the same result, by interpreting the uncountably many tenary relations in the signature of 
$\M$ defined in \cite[Lemma 5.1.3]{Sayedneat}, which is the base of $\A$ and $\B$ 
to be {\it disjoint} in $\M$, not just distinct. 
We work with $2<n<\omega$ instead of only $n=3$. The proof presented in {\it op.cit} lift verbatim to any such $n$.
Let $u\in {}^nn$. Write $\bold 1_u$ for $\chi_u^{\M}$ (denoted by $1_u$ (for $n=3$) in \cite[Theorem 5.1.4]{Sayedneat}.) 
We denote by $\A_u$ the Boolean algebra $\Rl_{\bold 1_u}\A=\{x\in \A: x\leq \bold 1_u\}$ 
and similarly  for $\B$, writing $\B_u$ short hand  for the Boolean algebra $\Rl_{\bold 1_u}\B=\{x\in \B: x\leq \bold 1_u\}.$
It can be shown that $\A\equiv_{\infty}\B$. 
We show that \pe\ has a \ws\ in an \ef-game over $(\A, \B)$ concluding that $\A\equiv_{\infty}\B$.
At any stage of the game,
if \pa\ places a pebble on one of
$\A$ or $\B$, \pe\ must place a matching pebble,  on the other
algebra.  Let $\b a = \la{a_0, a_1, \ldots, a_{n-1}}$ be the position
of the pebbles played so far (by either player) on $\A$ and let $\b
b = \la{b_0, \ldots, b_{n-1}}$ be the the position of the pebbles played
on $\B$.  \pe\ maintains the following properties throughout the game:
For any atom $x$ (of either algebra) with
$x\cdot \bold 1_{Id}=0$ then $x \in a_i\iff x\in b_i$ and $\b a$ induces a finite partion of $\bold 1_{Id}$ in $\A$ of $2^n$
 (possibly empty) parts $p_i:i<2^n$ and $\b b$ induces a partion of
 $\bold 1_{Id}$ in $\B$ of parts $q_i:i<2^n$.  Furthermore, $p_i$ is finite $\iff$ $q_i$ is
finite and, in this case, $|p_i|=|q_i|$.

That such properties can be maintained is fairly easy to show.
Using that $\M$ has quantifier elimination we get, using the same argument in {\it op.cit} 
that $\A\in \Nr_n\CA_{\omega}$.  The property that $\B\notin \Nr_n\CA_{n+1}$ is also still maintained.
To see why consider the substitution operator $_{n}{\sf s}(0, 1)$ (using one spare dimension) as defined in the proof of \cite[Theorem 5.1.4]{Sayedneat}.
Assume for contradiction that 
$\B=\Nr_{n}\C$, with $\C\in \CA_{n+1}.$ Let $u=(1, 0, 2,\ldots n-1)$. Then $\A_u=\B_u$
and so $|\B_u|>\omega$. The term  $_{n}{\sf s}(0, 1)$ acts like a substitution operator corresponding
to the transposition $[0, 1]$; it `swaps' the first two co--ordinates.
Now one can show that $_{n}{\sf s(0,1)}^{\C}\B_u\subseteq \B_{[0,1]\circ u}=\B_{Id},$ 
so $|_{n}{\sf s}(0,1)^{\C}\B_u|$ is countable because $\B_{Id}$ was forced by construction to be 
countable. But $_{n}{\sf s}(0,1)$ is a Boolean automorpism with inverse
$_{n}{\sf s}(1,0)$, 
so that $|\B_u|=|_{n}{\sf s(0,1)}^{\C}\B_u|>\omega$, contradiction. 

Now we prove the statement in item(7). Item (6) excludes any first order definable class between 
$\bold S_d\Nr_n\CA_{\omega}\cap \CRCA_n$ and $\bold S_c\Nr_n\CA_{n+3}$. So hoping for a contradiction, we can only 
assume that there is a class 
$\bold M$ between $\Nr_n\CA_{\omega}\cap {\sf CRCA_n}$ and $\bold S_d\Nr_n\CA_{\omega}\cap \CRCA_n$ that is first order definable. 
Then ${\bf El}(\Nr_n\CA_{\omega}\cap {\sf CRCA_n})\subseteq \bold M\subseteq \bold S_d\Nr_n\CA_{\omega}\cap \CRCA_n$. 
We have $\B\equiv \A$, and $\A\in \Nr_n\CA_{\omega}\cap {\sf CRCA_n}$, hence $\B\in {\bf El}(\Nr_n\CA_{\omega}\cap {\sf CRCA_n})\subseteq \bold S_d\Nr_n\CA_{\omega}\cap \CRCA_n$. 
We show that $\B$  is in fact outside $\bold S_d\Nr_n\CA_{\omega}\cap {\bf At}\supseteq \bold S_d\Nr_n\CA_{\omega}\cap \CRCA_n$  getting the hoped for contradiction,
and consequently the required.  
Take the cardinality $\kappa$ the signature of $\M$ to be $2^{2^{\omega}}$ and assume for contradiction that  
$\B\in \bold S_d\Nr_n\CA_{\omega}\cap \bf At$. 
Then $\B\subseteq_d \Nr_n\D$, for some $\D\in \CA_{\omega}$ and $\Nr_n\D$ is atomic. For brevity, 
let $\C=\Nr_n\D$. Then $\Rl_{Id}\B\subseteq_d \Rl_{Id}\C$.
Since $\C$ is atomic,  then $\Rl_{Id}\C$ is also atomic.  Using the same reasoning as above, we get that $|\Rl_{Id}\C|>2^{\omega}$ (since $\C=\Nr_n\CA_{\omega}$.) 
By the choice of $\kappa$, we get that $|\At\Rl_{Id}\C|>\omega$. 
By density, $\At\Rl_{Id}\C\subseteq \At\Rl_{Id}\B$, so $|\At\Rl_{Id}\B|\geq |\At\Rl_{Id}\C|>\omega$.   
But by the construction of $\B$, we have  $|\Rl_{Id}\B|=|\At\Rl_{Id}\B|=\omega$,   which is a  contradiction and we are done.

We have shown that the class ${\sf CRCA}_n$ is not elementary,  hence it is not pseudo--univeral. It is also not 
closed under $\bold S$: Take any representable algebra that is not completely representable,  
for example an infinite algebra that is not atomic. Other atomic examples are the algebras $\C$ and the term algebra $\Tm \bf At$  
dealt with in theorem \ref{can}.  
$\bold S{\sf Nr}_n\CA_{n+3}$. 
Call such an algebra lacking a complete representation $\A$. 
Then $\A^+$  is completely representable, a classical result of Monk's \cite{HH} and $\A$ embeds into $\A^+$.
For pseudo--elementarity one proceeds like the relation algebra case \cite[pp. 279--280]{HHbook} 
defining complete representability 
in a two--sorted theory, undergoing the obvious modifications.
For pseudo--elementarity  for the class ${\sf Nr}_n\CA_{\beta}$ for any $2<n<\beta$  one easily adapts \cite[Theorem 21]{r} by defining  ${\sf Nr}_n\CA_\beta$ 
in a two--sorted theory, when $1<n<\beta<\omega$, and a three--sorted one, when
$\beta=\omega$. The first part is easy.  For the second part; one uses a sort for a $\CA_n$
$(c)$, the second sort is for the Boolean reduct of a $\CA_n$ $(b)$
and the third sort for a set of dimensions $(\delta).$
For any infinite ordinal $\mu$, the defining theory for ${\sf Nr}_n\CA_{\mu}={\sf Nr}_n{\sf CA}_{\omega}$,
includes sentences requiring that the constants $i^{\delta}$ for $i<\omega$
are distinct and that the last two sorts define
a $\CA_\omega$. There is a function $I^b$ from sort $c$ to sort $b$ and sentences forcing  that $I^b$ is injective and
respects the $\CA_n$ operations. For example, for all $x^c$ and $i<n$,
$I^b({\sf c}_i x^c)= {\sf c}_i^b(I^b(x^c)).$ The last requirement is that $I^b$ maps {\it onto} the set of $n$--dimensional elements. This can  be easily expressed
via (*)
$$\forall y^b((\forall z^{\delta}(z^{\delta}\neq 0^{\delta},\ldots (n-1)^{\delta}\implies  c^b(z^{\delta}, y^b)=y^b))\iff \exists x^c(y^b=I^b(x^c))).$$
In all cases, it is clear that any algebra of the right type is the first sort of a model of this theory.
Conversely, a model for this theory will consist of  $\A\in \CA_n$  (sort $c$),
and a $\B\in \CA_{\omega}$;  the dimension of the last is the cardinality of
the $\delta$--sorted elements which is $\omega$, such that by (*) $\A=\Nr_n\B$.
Thus this three--sorted theory defines the class of neat reducts;
furthermore, it is clearly recursive. Recursive enumerability follows from \cite[Theorem 9.37]{HHbook}.

First part of the last item follows from the second part of theorem \ref{can} proving $\Psi(l, \omega)$; namely, there exists a countable 
$\A\in \Nr_n\CA_l\cap \RCA_n$ such  that $\Cm\At\A\notin \RCA_n$.
We prove the remaining part of (the last) item 8. 
Assume for contradiction that there is an $\A\in {\sf RCA}_n^{\omega, \omega}\cap \sf Cat\cap \bf At$. 
Then by definition  $\A\in \Nr_n\CA_{\omega}$, so by the first item of 
theorem \ref{iii},  $\A\in \CRCA_n$.  But this complete representation, like in the proof of $(2)\implies (3)$ 
of theorem \ref{main}, induces a(n ordinary) representation of $\Cm\At\A$ 
which is a 
contradiction. 
\end{proof}

In the following theorem ${\bf Up}$ denotes the operation of forming ultraproducts,
and ${\bf Ur}$  denotes the operation of forming {\it ultraroots}.
${\sf LCA}_n$ denotes the class of algebras introduced in the proof of theorem \ref{iii}; where it it was shown that ${\sf LCA}_n={\bf El}{\sf CRCA}_n.$
\begin{corollary}\label{main3} Let $2<n<\omega$. 
Then ${\sf Nr}_n\CA_{\omega}\cap {\bf At}\subsetneq {\bf El}{\sf Nr}_n\CA_{\omega}\cap {\bf At}
\subsetneq {\bf El}\bold S_d{\sf Nr}_n\CA_{\omega}\cap {\bf At}\subseteq  {\bf El}\bold S_c{\sf Nr}_n\CA_{\omega}\cap {\bf At}={\sf LCA}_n
\subsetneq {\sf RCA}_{n, \omega}^{\bold S} \subsetneq
{\bf Up}{\sf RCA}_{n, \omega}^{\bold S}={\bf Ur}{\sf RCA}_{n, \omega}^{\bold S}={\bf El}{\sf RCA}_{n, \omega}^{\bold S}\subsetneq {\bf S}{\sf Nr}_n\CA_{\omega}\cap {\bf At}.$
Furthermore, ${\bf El}\bold L$ for any $\bold L$  of the above classes  is an elementary subclass of $\RCA_n$ that is 
not finitely axiomatizable.
\end{corollary}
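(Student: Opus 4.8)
The statement is a synthesis of everything proved so far, so the plan is to read each link off the earlier results and then supply an atomic witness for every strict step. The soft links come first. Reading left to right, each plain inclusion is monotonicity of ${\bf El}$ applied to the chain $\Nr_n\CA_\omega\subseteq\bold S_d\Nr_n\CA_\omega\subseteq\bold S_c\Nr_n\CA_\omega$ (the middle inclusion because, as used throughout, a dense subalgebra is a complete subalgebra), noting that $\cap\,{\bf At}$ is harmless since atomicity is first order. The equality ${\bf El}\bold S_c\Nr_n\CA_\omega\cap{\bf At}={\sf LCA}_n$ is precisely Theorem~\ref{iii}(5). For the right end, Henkin's neat embedding theorem gives $\bold S\Nr_n\CA_\omega=\RCA_n$, a variety and hence elementary; since each class $\bold L$ occurring in the chain sits inside $\RCA_n$, we get ${\bf El}\bold L\subseteq{\bf El}\RCA_n=\RCA_n$. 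This simultaneously yields the final containment and the closing assertion that every ${\bf El}\bold L$ is an elementary subclass of $\RCA_n$.

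For ${\sf LCA}_n\subseteq{\sf RCA}_{n,\omega}^{\bold S}$ I would argue that $\A\in{\sf LCA}_n$ is representable (${\sf LCA}_n={\bf El}\,{\sf CRCA}_n\subseteq\RCA_n$) and, \pe\ having a \ws\ in every $G_k(\At\A)$, the atom structure $\At\A$ satisfies all Lyndon conditions and is therefore strongly representable, i.e. $\Cm\At\A\in\RCA_n$; as $\Cm\At\A\in\RCA_n$ already forces $\A\in\RCA_n$, this puts $\A$ in ${\sf RCA}_{n,\omega}^{\bold S}$. The block ${\bf Up}{\sf RCA}_{n,\omega}^{\bold S}={\bf Ur}{\sf RCA}_{n,\omega}^{\bold S}={\bf El}{\sf RCA}_{n,\omega}^{\bold S}$ I would obtain from the Frayne--Keisler identity ${\bf El}\bold K={\bf Ur}{\bf Up}\bold K$ together with the fact, extracted from the Hirsch--Hodkinson ultraproduct analysis of strongly representable atom structures \cite[Corollary 3.7.2]{HHbook2}, that the elementary closure here is already reached after a single ultraproduct and is closed under ultraroots.

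The strict steps carry the weight. For $\Nr_n\CA_\omega\cap{\bf At}\subsetneq{\bf El}\Nr_n\CA_\omega\cap{\bf At}$ I use the neat-reduct pair of Theorem~\ref{main2}(7): $\A\in\Nr_n\CA_\omega\cap{\sf CRCA}_n$, $\A\equiv\B$ with $\B$ atomic and $\B\notin\Nr_n\CA_\omega$ (indeed $\B\notin\bold S_d\Nr_n\CA_\omega$, by the $\Rl_{Id}$ cardinality-of-atoms computation of {\it op.\,cit.}), so $\B$ lies in the closure but not the class. The subtler step ${\bf El}\Nr_n\CA_\omega\cap{\bf At}\subsetneq{\bf El}\bold S_d\Nr_n\CA_\omega\cap{\bf At}$ requires an atomic member of $\bold S_d\Nr_n\CA_\omega$ elementarily equivalent to no neat reduct, which I would draw from the dense-subalgebra constructions accompanying the rainbow algebra on $\Z$ and $\N$ in Theorem~\ref{main2}(6), again isolating the obstruction on a relativized component. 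Both ${\sf LCA}_n\subsetneq{\sf RCA}_{n,\omega}^{\bold S}$ and ${\sf RCA}_{n,\omega}^{\bold S}\subsetneq{\bf Up}{\sf RCA}_{n,\omega}^{\bold S}$ rest on the non-elementarity of strong representability \cite[Corollary 3.7.2]{HHbook2}: since ${\sf LCA}_n$ is elementary, a strongly representable atom structure failing some Lyndon condition separates the first pair, while strongly representable $\alpha_l$ with $\Pi_l\alpha_l/F$ not strongly representable show the class is not closed under ${\bf Up}$, giving the second. Finally ${\bf El}{\sf RCA}_{n,\omega}^{\bold S}\subsetneq\bold S\Nr_n\CA_\omega\cap{\bf At}$ is witnessed by the term algebra $\Tm{\bf At}$ of Theorem~\ref{can}: it is atomic and representable, but \pa\ wins the finite game forcing $\Cm\At\Tm{\bf At}\notin\bold S\Nr_n\CA_{n+3}$, and since any strongly representable algebra would have \pe\ winning the corresponding game (Lemma~\ref{n}) and \pa's finite win is preserved under $\equiv$, $\Tm{\bf At}$ is elementarily equivalent to no algebra in ${\sf RCA}_{n,\omega}^{\bold S}$.

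For non-finite axiomatizability of each ${\bf El}\bold L$, the tool is the blow-up-and-blur splitting of Theorem~\ref{iii}(5): the sequence $\C_l=\Cm\At\A_l$ with $\A_l=\Nr_n\Bb_l(\R_l,J_l,E_l)$ has $\C_l\notin\RCA_n\supseteq{\bf El}\bold L$, while $\Pi_l\C_l/F$ is atomic with \pe\ winning every $G_k$ on its atom structure (she wins $G_k(\At\A_l)$ for all large $l$), so $\Pi_l\C_l/F\in{\sf LCA}_n$; this witnesses non-finite axiomatizability of every ${\bf El}\bold L$ from ${\sf LCA}_n$ upward, and for the two smaller neat-reduct classes one substitutes the analogous sequences coming from the non-finite axiomatizability of the neat-reduct tower (\cite{t}, \cite[Theorem 5.1.4]{Sayedneat}). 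The main obstacle I expect is exactly the pair of neat-reduct separations — the second strict step and the verification that the rainbow companion algebra does or does not lie in the relevant elementary closure — together with the ${\bf Up}={\bf Ur}={\bf El}$ collapse, since all three hinge on the delicate non-elementarity and cardinality/game arguments of Theorems~\ref{main2} and \ref{iii} rather than on any formal closure computation.
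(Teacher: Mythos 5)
Your overall decomposition and several of the individual steps (the soft inclusions via monotonicity of ${\bf El}$, the identification with ${\sf LCA}_n$ via theorem \ref{iii}(5), the first strictness via the pair of item (7) of theorem \ref{main2}, and the elementarity/non-elementarity argument for ${\sf LCA}_n\subsetneq{\sf RCA}_{n,\omega}^{\bold S}$) agree with the paper. But the proof breaks at the final strict inclusion ${\bf El}{\sf RCA}_{n,\omega}^{\bold S}\subsetneq\bold S{\sf Nr}_n\CA_\omega\cap{\bf At}$, which is exactly the step the paper flags as ``not trivial to show''. Your argument needs the implication ``$\B\in{\sf RCA}_{n,\omega}^{\bold S}$ $\implies$ \pe\ wins the relevant finite-rounded game on $\At\B$'', attributed to lemma \ref{n}. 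That lemma, however, ties games to \emph{complete} neat embeddings: it derives a \ws\ for \pe\ from $\B\in\bold S_c\Nr_n\CA_m$, whereas membership in ${\sf RCA}_{n,\omega}^{\bold S}$ only gives $\Cm\At\B\in\RCA_n=\bold S\Nr_n\CA_\omega$, a statement involving $\bold S$, not $\bold S_c$, from which no complete neat embedding (and no game statement about $\At\B$) follows. Worse, if your implication were true it would yield ${\sf RCA}_{n,\omega}^{\bold S}\subseteq{\sf LCA}_n$, contradicting the strict inclusion ${\sf LCA}_n\subsetneq{\sf RCA}_{n,\omega}^{\bold S}$ occurring two steps earlier in the very chain you are proving. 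Note also that theorem \ref{can} gives \pa\ a win on the \emph{finite} rainbow structure $\At(\CA_{n+1,n})$, transferred to $\Cm{\bf At}$ only through an embedding of algebras; no \ws\ for \pa\ in any finite-rounded game on $\At(\Tm{\bf At})$ itself is established, so the ``preserved under $\equiv$'' step has nothing to act on. Hence your witness $\Tm{\bf At}$ is only shown to lie outside ${\sf RCA}_{n,\omega}^{\bold S}$, not outside its elementary closure. The paper's argument here is entirely different: it takes a Monk-style algebra $\mathfrak{M}(\G)$ over a graph $\G$ of finite chromatic number (so $\Cm\rho(\G)$ is not representable) and uses the first order definable subalgebra $\F(\G)$, the first order interpretability of $\G$ in $\rho(\G)$, and the non-trivial representability of $\Tm\rho(\G)$ to conclude $\rho(\G)\notin\At{\bf El}{\sf RCA}_{n,\omega}^{\bold S}$ while $\rho(\G)\in\At\RCA_n$.

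There are two further gaps. For the second strictness you state correctly what is needed — an atomic member of $\bold S_d\Nr_n\CA_\omega$ elementarily equivalent to no full neat reduct — but you draw it from the wrong construction: the rainbow companion $\B$ of theorem \ref{main2}(6) is never shown to lie outside ${\bf El}\Nr_n\CA_\omega$, and nothing in that construction obstructs it from being elementarily equivalent to a neat reduct. The witness that works is the algebra $\B$ of theorem \ref{main2}(2), taken from \cite{SL}: it is dense in $\wp({}^n\Q)\in\Nr_n\CA_\omega$, hence lies in $\bold S_d\Nr_n\CA_\omega\cap{\bf At}$, while \cite{SL} proves $\B\notin{\bf El}\Nr_n\CA_{n+1}\supseteq{\bf El}\Nr_n\CA_\omega$. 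On non-finite axiomatizability, your blow-up-and-blur sequence settles ${\sf LCA}_n$ and the classes above it, but for ${\bf El}(\Nr_n\CA_\omega\cap{\bf At})$ and ${\bf El}(\bold S_d\Nr_n\CA_\omega\cap{\bf At})$ you only gesture at ``analogous sequences'' from \cite{t} and \cite[Theorem 5.1.4]{Sayedneat}, without verifying that the ultraproducts of those non-representable algebras land \emph{inside} the class in question, which is the entire content of such an argument; the paper settles all classes at once, since the finite non-representable Monk algebras of \cite[Construction 3.2.76]{HMT2} lie outside $\RCA_n$ (hence outside every ${\bf El}\bold L$) while their non-trivial ultraproducts are atomic and lie in $\Nr_n\CA_\omega\cap{\bf At}$, the smallest class of the chain. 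Finally, the collapse ${\bf Up}{\sf RCA}_{n,\omega}^{\bold S}={\bf Ur}{\sf RCA}_{n,\omega}^{\bold S}={\bf El}{\sf RCA}_{n,\omega}^{\bold S}$ does not follow from Keisler--Shelah alone, which gives only ${\bf El}\bold K={\bf Ur}{\bf Up}\bold K$, nor from \cite[Corollary 3.7.2]{HHbook2}, which is a non-elementarity statement; the paper cites \cite[Proposition 2.90]{HHbook} precisely for this identity.
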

\begin{proof} The first strictness is witnessesd by the algebra $\B$ used in item (7) since $\B\in {\bf El}\Nr_n\CA_{\omega}\cap \bf At$, 
but $\B\notin \Nr_n\CA_{n+1}$. The second strictness is 
witnessed by the algebra (denoted also by) $\B$ used in item (2) taken from \cite{SL}, for in this case, 
$\B\notin {\bf El}\Nr_n\CA_{n+1}$,  but $\B\in \bold S_d\Nr_n(\CA_{\omega}\cap \bf At)\subseteq \bold S_d\Nr_n\CA_{\omega}\cap \bf At\subseteq {\bf El} \bold S_d\Nr_n\CA_{\omega}\cap 
\bf At$.
$\sf LCA_n\subset\sf {\sf RCA}_{n, \omega}^{\bold S}$, because if $\A\in \CA_n$ and \pe\ has a \ws\ in $G_k(\At\A)$ for all $k<\omega$, 
then \pe\ has  a \ws\ in $G_k(\At\Cm\At\A)=G_k(\At\A)$ for all
$k<\omega$. The inclusion is proper, because the first class is elementary by definition, while the second is not \cite{HHbook2}.
It is known \cite[Proposition 2.90]{HHbook} that
${\bf Up}{\sf RCA}_{n, \omega}^{\bold S}={\bf Ur}{\sf RCA}_{n, \omega}^{\bold S}={\bf El}{\sf RCA}_{n, \omega}^{\bold S}$.

The strictness of inclusion ${\bf El}{\sf RCA}_{n, \omega}^{\bold S}\subseteq {\sf RCA}_n\cap {\bf At}$ is not trivial to show. 
We give a sketch of the idea. Take  $\omega$--many disjoint copies of the
$n$ element graph with nodes $\{1,2,\ldots, n\}$ and edges
$0\to 1$, $1\to 2$, and $\dots n-1\to n$. Then the chromatic number of $\G$, in symbols $\chi(\G)$ is $<\infty$.  Now  $\G$  has an $n-1$ first order definable colouring.
Since ${\mathfrak M}(\G)$ with atom structure $\rho(\G)$, as defined in \cite[Definition 3.6.3, pp. 77-78]{HHbook2} is not representable by \cite[Proposition 3.6.8]{HHbook2}, then the first
order subalgebra $\F(\G)$ in the sense of \cite[pp.456 item (3)]{HHbook} is also not representable, because $\G$ is
first order interpretable in $\rho(\G)$. 
Here $\F(\G)$ is the subalgebra of ${\frak M}(\G)$
consisting of all sets of atoms in $\rho(\G)$ of the form $\{a\in \rho(\G)\}: \rho(\G)\models \phi(a, \bar{b})\}(\in {\frak M}(\G))$
for some first order formula $\phi(x, \bar{y})$ of the signature
of $\rho(\G)$ and some tuple $\bar{b}$ of atoms.  It is easy to check that $\F(\G)$ is indeed a subalgebra of ${\frak M}(\G)$, and that
$\Tm(\rho(\G))\subseteq \F(\G)\subseteq {\frak M}(\G)$.
But $\F(\G)$ is strictly larger than the  term algebra.
Indeed, the term algebra can be shown to be representable (this is not trivial).
We readily conclude that $\rho(\G)\notin {\At}{\bf El}{\sf RCA}_{n, \omega}^{\bold S}$, but $\rho(\G)\in {\At}(\sf RCA_n)$,
so ${\bf El}{\sf RCA}_{n, \omega}^{\bold S}\neq {\sf RCA}_n\cap {\bf At}$.

For non--finite axiomatizability: In \cite[Construction 3.2.76, pp.94]{HMT2}
the non--representable Monk algebras used are finite,  hence they atomic and
are outside ${\sf RCA}_n\supseteq {\bf El}{\sf RCA_{n, \omega}^{\bold S}}\supseteq {\sf LCA}_n$.  Furthermore, any non--trivial ultraproduct of such algebras
is also atomic and is in  
${\sf Nr}_n\CA_{\omega}\cap {\bf At}\subseteq {\bf El}{\sf Nr}_n\CA_{\omega}\cap {\bf At}\subseteq {\bf El}\bold S_c{\sf Nr}_n\CA_{\omega}\cap {\bf At}=
{\sf LCA}_n\subseteq {\bf El} {\sf  RCA}_{n,\omega} ^{\bold S}$ (Witness too the proof of the last item in theorem \ref{iii}).
 \end{proof}

\end{document}